\documentclass[11pt]{tearem}

\newcommand\downset{\mathord{\downarrow}}
\newcommand\upset{\mathord{\uparrow}}
\renewcommand{\diamond}{\lozenge}
\newcommand{\pprec}{\mathrel{{\prec}\mkern-5mu{\prec}}}

\let\<\langle
\let\>\rangle

\let\O\relax
\let\P\relax
\let\Omega\Relax
\defOrd[\mathbcal]{\O}{O} 
\defOrd[\mathbcal]{\C}{C} 
\defOrd[\mathbcal]{\Z}{Z} 
\defOrd[\mathbcal]{\at}{at}
\defOp{\Coz}{Coz} 

\defOrd[\mathscr]{\P}{P} 
\defOrd[\mathscr]{\Omega}{O} 
\defOrd[\mathscr]{\CP}{C} 
\defOp[\mathbf]{\RO}{RO}
\let\int\relax
\defOp[\mathsf]{\int}{int}
\defOp[\mathsf]{\cl}{cl}

\defOrd[\mathcal]{\F}{F} 
\defOrd[\mathscr]{\J}{J} 

\defOrd[\mathscr]{\A}{A} 
\defOrd[\mathcal]{\w}{w} 

\defOrd[\mathcal]{\B}{B} 
\defOrd[\mathcal]{\BC}{B} 
\defOrd[\mathcal]{\BO}{B^{\square}} 

\let\max\relax
\defOp[\mathsf]{\max}{max}

\newcommand{\cat}{\mathbf}
\defOp[\cat]{\Top}{Top}
\defOp[\cat]{\SMT}{SMT}
\defOp[\cat]{\MT}{MT}
\defOp[\cat]{\Frm}{Frm}

\title{A choice-free approach to Wallman compactifications}

\author[1]{Sebastian\ D.\ Melzer}
\author[2,3]{Cerene Rathilal}
\author[2]{Ranjitha Raviprakash}

\affil{Department of Mathematics, University of Salerno, Italy}
\affil[2]{University of KwaZulu-Natal, School of Agriculture and Science, Discipline of Mathematics, Private Bag X54001, Durban 4000, South Africa.}
\affil[3]{National Institute for Theoretical and Computational Sciences (NITheCS), South Africa}

\begin{document}

\maketitle

\begin{abstract}
    The classical Wallman compactification of a $T_1$-space and the Stone--\v{C}ech compactification of a completely regular space rely on choice principles. We show that, by representing a space by its powerset MT-algebra (McKinsey--Tarski algebra), both constructions admit choice-free compactifications. More generally, from any Wallman basis of a spatial $T_1$ MT-algebra we construct a compact $T_1$ MT-algebra which is a compactification of the original algebra. Taking the basis of all closed elements yields a choice-free Wallman compactification of every spatial $T_1$ MT-algebra, while taking the basis of zero-elements yields a choice-free Stone--\v{C}ech compactification of every spatial completely regular MT-algebra. Choice is only needed to show that the resulting compactifying algebras are spatial, and hence to recover the usual compactifying spaces. We also show that these constructions recover the corresponding compactifications of frames of opens.
\end{abstract}

\tableofcontents

\section{Introduction}
Compactifications are among the central constructions of general topology. 
They replace a space by a compact extension while retaining the original space as a dense subspace. Classical examples include the Alexandroff one-point compactification, the Wallman compactification of a $T_1$-space, and the Stone--\v{C}ech compactification of a completely regular space (see, e.g., \cite[Sec.~6]{Eng89}). 
Unlike the Alexandroff construction, the Wallman and Stone--\v{C}ech compactifications rely on appropriate choice principles. In particular, the existence of the Wallman compactification for arbitrary $T_1$-spaces cannot be proved without choice \cite{KT13}. In the setting of frames and locales, the corresponding compact objects can be constructed without choice
(see, e.g., \cite{BM80,Joh84,Ban90b}), but the classical compactifying spaces are recovered only after an appropriate choice principle is invoked.

McKinsey--Tarski algebras, or MT-algebras, were introduced in \cite{BR23} as an alternative pointfree approach to topology. A topological space $X$ is represented by the MT-algebra $\P(X)$, its powerset algebra equipped with the interior operator. Spatial MT-algebras correspond to topological spaces, while the open elements of an arbitrary MT-algebra form a frame, and every frame arises in this way. 
Thus, MT-algebras provide a setting in which one may ask whether the classical compactification constructions admit choice-free analogues that retain more information than their frame-theoretic counterparts.

The main result of this paper is the construction of choice-free Wallman compactifications for spatial MT-algebras. From any Wallman basis of a spatial $T_1$ MT-algebra, we construct a compact $T_1$ MT-algebra which is a compactification of the original algebra. Taking the basis of all closed elements yields a choice-free Wallman compactification of every spatial $T_1$ MT-algebra. In particular, if $X$ is a $T_1$-space, then $\P(X)$ admits a choice-free Wallman compactification in the category of MT-algebras. Taking instead the basis of zero-elements yields a choice-free Stone--\v{C}ech compactification of every spatial completely regular MT-algebra, and hence of $\P(X)$ whenever $X$ is completely regular. The construction is carried out using the regular open algebra of the poset of proper filters of the chosen Wallman basis, and is in this respect related to the choice-free Stone duality of Bezhanishvili and Holliday \cite{BH20}.

These compactifications do not, in the absence of choice, necessarily correspond to compactifying spaces. Choice enters when one shows that the resulting compact MT-algebras are spatial. Once spatiality is available, the Wallman compactification of $\P(X)$ corresponds to the classical Wallman compactification of $X$, and the Stone--\v{C}ech compactification of $\P(X)$ corresponds to the classical Stone--\v{C}ech compactification. Thus, the MT-algebraic construction gives choice-free compactifications of the algebras representing spaces even when the corresponding classical compactifying spaces cannot be obtained without choice principles.

We also show that these constructions recover the pointfree compactifications on frames of opens. For a $T_1$-algebra $M$, the frame of opens of its Wallman compactification is isomorphic to the Wallman compactification of $\O(M)$ \cite{Joh84}. Similarly, for a completely regular MT-algebra $M$, the frame of opens of its Stone--\v{C}ech compactification is isomorphic to the Stone--\v{C}ech compactification of $\O(M)$ \cite{BM80}.

To place these constructions in the general theory of MT-algebras, we introduce compactifications of MT-algebras as dense MT-embeddings from compact MT-algebras. For spatial MT-algebras, this recovers the usual notion of compactification of spaces. We show that every MT-algebra admits a compactification by constructing an MT-algebraic analogue of the Alexandroff one-point compactification. On the other hand, assuming the Axiom of Choice, N\"obeling's Spatiality Theorem \cite{Noeb54} implies that non-spatial MT-algebras admit no $T_1$-compactifications. In this sense, the spatiality hypothesis in our Wallman and Stone--\v{C}ech compactification theorems is unavoidable.

The paper is organized as follows. In \cref{sec:prelim}, we recall the necessary facts about frames, MT-algebras, separation axioms, and the functors relating MT-algebras to spaces and frames. In \cref{sec:compactifications}, we introduce compactifications of MT-algebras, construct the Alexandroff extension, and establish the obstruction to $T_1$-compactifications of non-spatial MT-algebras. In \cref{sec:wallman}, we introduce Wallman bases and prove the choice-free Wallman compactification theorem. We then derive the Stone--\v{C}ech analogue, relate both constructions to the corresponding compactifications of frames of opens, and recover the classical compactifications under choice.

\section{Preliminaries}
\label{sec:prelim}

In this section, we recall the necessary background on frames and MT-algebras. We also recall N\"obeling's Spatiality Theorem for MT-algebras, derive Isbell's Spatiality Theorem for frames, and use it to explain the relation between the pointfree and classical Wallman compactifications. For general background on frames, see \cite{PP12,PP21}; for MT-algebras, see \cite{BR23}.

A \emph{frame} is a complete lattice $L$ satisfying the infinite distributive law
\[
a\wedge \bigvee S=\bigvee\{a\wedge s\mid s\in S\}
\]
for all $a\in L$ and $S\subseteq L$. A \emph{frame homomorphism} is a map between frames that preserves finite meets and arbitrary joins. We write $\Frm$ for the category of frames and frame homomorphisms. The motivating example is the frame $\Omega(X)$ of open subsets of a topological space $X$. A frame is \emph{spatial} if it is isomorphic to $\Omega(X)$ for some space $X$.

For an element $a$ of a frame $L$, let
\[
a^*=\bigvee\{b\in L\mid a\wedge b=0\}
\]
be its pseudocomplement.

\begin{definition}[Separation axioms for frames]
A frame $L$ is

\begin{enumerate}
\item \emph{subfit} if $a\nleq b$ implies there exists $c\in L$ such that $a\vee c=1$ and $b\vee c\neq 1$.

\item \emph{regular} if  $a=\bigvee\{b\in L\mid b\prec a\}$,
for each $a \in L$, where $b\prec a$ iff $b^*\vee a=1$.

\item \emph{completely regular} if $a=\bigvee\{b\in L\mid b\pprec a\}$
for each $a \in L$,
where $b \pprec a$ iff there is a family $\{c_p\mid p\in [0,1]\cap\mathbb{Q}\}\subseteq L$ such that $b\leq c_0$, $c_1\leq a$, and $c_p\prec c_q$ whenever $p<q$.
\item \emph{normal} if $a\vee b=1$ implies that there exist $u,v\in L$ such that $u\wedge v=0$, $a\vee u=1$, and $b\vee v=1$.
\end{enumerate}
\end{definition}

An element $a$ of a frame $L$ is \emph{compact} if, whenever $a\leq \bigvee S$ for some $S\subseteq L$, there is a finite $T\subseteq S$ such that $a\leq\bigvee T$. We call $L$ \emph{compact} if its top element is compact. A frame homomorphism $h\colon L\to L'$ is \emph{dense} if $h(a)=0$ implies $a=0$.

\begin{definition}
A \emph{compactification} of a frame $L$ is a pair $(K,h)$, where $K$ is a compact frame and $h\colon K\to L$ is a dense onto frame homomorphism. 

\end{definition}

\begin{remark}
The term \emph{compactification} is often used with additional assumptions on the frame $K$. In this paper, we follow the more general convention (see, e.g., \cite{PP21}) that a compactification is simply a dense onto homomorphism from a compact frame, and we state any further separation requirements explicitly when needed. For example, if $K$ is regular, then we call $(K,h)$ a \emph{regular compactification}.
\end{remark}

We now recall the Wallman compactification of a subfit frame (see, e.g., \cite[pp.~143--147]{PP21}). Let $L$ be a frame. For $u, v \in L$, we say that $u$ is \emph{$v$-small} if, for every $w \in L$, $u \vee w = 1$ implies $v \vee w = 1$. An element $u \in L$ is \emph{suited} if $u = \bigvee\{v \in L \mid v \text{ is $u$-small}\}$. We denote the collection of suited elements of a frame $L$ by $L_s$.
For a frame $L$, denote by $\J(L)$ its ideal frame. Then $\J(L)_s$ is compact and subfit. If $L$ is subfit, then every principal ideal is a suited element, and the join map restricts to a dense onto frame homomorphism $\nu : \J(L)_s \to L$.

\begin{theorem}[Wallman compactification for frames]
\label{thm:frm-wallman}
Let $L$ be a subfit frame. Then $(\J(L)_s,\nu)$ is a subfit compactification of $L$.
\end{theorem}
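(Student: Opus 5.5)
The theorem packages several facts that the paper recalls just before it. Here is how I would verify them directly, without choice. There are four things to check. First, $\J(L)_s$ is a frame, and in fact a subframe of $\J(L)$ under the appropriate joins. Second, it is compact. Third, it is subfit. Fourth, $\nu$, given by $\nu(I)=\bigvee I$, is a well-defined, dense, onto frame homomorphism when $L$ is subfit.

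\textbf{Step 1 (frame structure).} Smallness in $\J(L)$ is monotone: if $u$ is $v$-small and $u'\le u$, $v\le v'$, then $u'$ is $v'$-small. Suppose $u_1$ is $v_1$-small and $u_2$ is $v_2$-small. Then $u_1\wedge u_2$ is $(v_1\wedge v_2)$-small. The reason is that if $(u_1\wedge u_2)\vee w=1$, then by distributivity $u_1\vee w=1=u_2\vee w$. Hence $v_i\vee w=1$ for each $i$, and so $(v_1\wedge v_2)\vee w=1$. From this and distributivity, binary meets of suited elements are suited. Arbitrary joins of suited elements are suited, since a $v_i$-small element is $\bigvee v_i$-small. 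So $\J(L)_s$ is closed under finite meets and arbitrary joins, and $1$ is suited. It is therefore a subframe, and in particular a frame.

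\textbf{Step 2 (compactness).} The top element $L$ of $\J(L)$ is compact, because $1\in\bigvee_i I_i$ forces $1$ to lie in a finite join. Joins in $\J(L)_s$ agree with those in $\J(L)$, so $\J(L)_s$ is compact.

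\textbf{Step 3 (subfitness).} Let $I\nleq J$ with $I,J$ suited. Since $I$ is suited, there is a $I$-small $K$ with $K\nleq J$. This is the main obstacle: I need to produce $C\in\J(L)_s$ with $I\vee C=1$ but $J\vee C\ne 1$. My first attempt is to use compactness to reduce to principal ideals and the pseudocomplement-like element $C=\bigvee\{D \text{ suited}\mid D\wedge K\le J\}$, and to check $I\vee C=1$ via $K$ being $I$-small. If that fails, I would instead use the standard fact that a frame is subfit iff every element is a meet of elements of the form $c\to$ with covers. Alternatively, I would invoke the characterisation that compact frames in which all elements are suited are subfit (cf.\ \cite{PP21}). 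I expect this verification to be the delicate part.

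\textbf{Step 4 ($\nu$).} The join map $\J(L)\to L$ is a frame homomorphism, and therefore so is its restriction to the subframe $\J(L)_s$. It is onto because, $L$ being subfit, every principal ideal $\downset a$ is suited. For this I check that $\downset a=\bigvee\{\downset b\mid \downset b \text{ is } \downset a\text{-small}\}$. Subfitness shows that $b\le a$ implies $\downset b$ is $\downset a$-small. Indeed, $\downset b\vee W=1$ means $b\vee w=1$ for some $w\in W$, by compactness of $1$ in $\J(L)$, and then $a\vee w=1$. Density: suppose $\bigvee I=0$ with $I$ suited. Then $I=\{0\}$, which is the bottom of $\J(L)_s$. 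Hence $(\J(L)_s,\nu)$ is a subfit compactification.
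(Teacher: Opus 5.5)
Your verifications of suitedness run in the wrong direction. Every $v\le u$ is trivially $u$-small, so $u\le\bigvee\{v\mid v\text{ is }u\text{-small}\}$ holds for every $u$. Being suited is the converse statement: every $u$-small element lies below $u$. Your arguments for meets and joins in Step~1, and for principal ideals in Step~4, only establish the trivial inequality. In particular, subfitness is never actually used in Step~4. What is needed there is that any $k$ in a $\downset a$-small ideal satisfies $k\vee w=1\Rightarrow a\vee w=1$ for all $w$, hence $k\le a$ by subfitness.

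More seriously, the claim that $\J(L)_s$ is closed under the joins of $\J(L)$ is false. Let $L$ be the frame of open subsets of $[0,1]$ and $I_n=\downset(1/n,1]$. These are suited because $L$ is subfit. Their directed union $I$ is their join in $\J(L)$, but it is not suited. Any open $w$ with $(0,1]\cup w=[0,1]$ contains some $[0,\varepsilon)$, so $(1/n,1]\cup w=[0,1]$ for large $n$. Thus $\downset(0,1]$ is $I$-small although $(0,1]\notin I$. Hence $\J(L)_s$ is not a subframe. Both ``joins agree'' in Step~2 and ``restriction of a frame homomorphism to a subframe'' in Step~4 rest on a false premise. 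Step~3 only lists candidate strategies without carrying any out. The last one is circular: ``every element is suited'' is just the definition of subfitness, and the real issue is transferring suitedness from $\J(L)$ to $\J(L)_s$.

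Here is the structure you need. $\J(L)_s$ is closed under arbitrary intersections, since a $\bigcap_i J_i$-small ideal is $J_i$-small for each $i$. It is the fixed-point set of the nucleus $\kappa(J)=\{k\in L\mid \downset k\text{ is }J\text{-small}\}$ on $\J(L)$. To see this, check that $\kappa(J)$ is the least suited ideal containing $J$, and that $\kappa(I\cap J)=\kappa(I)\cap\kappa(J)$ using $(i\wedge j)\vee w=(i\vee w)\wedge(j\vee w)$. Joins in $\J(L)_s$ are therefore $\kappa\bigl(\bigvee_i I_i\bigr)$, and every step must be redone with them.

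\begin{itemize}
\item Compactness follows from $1\in\kappa(J)\iff 1\in J$; test smallness against the ideal $\{0\}$.
\item The map $\nu$ preserves these joins because $\bigvee\kappa(J)=\bigvee J$. This is exactly where subfitness of $L$ enters: if $\downset k$ is $J$-small, then $k$ is $\bigvee J$-small in $L$, so $k\le\bigvee J$.
\item For subfitness of $\J(L)_s$, observe that $1\in X\vee W$ iff $1\in X\vee\kappa(W)$ for all ideals $X,W$. So for suited $I,J$ the relation ``$I$ is $J$-small'' is the same in $\J(L)_s$ as in $\J(L)$. Suitedness of $J$ in $\J(L)$ then gives $I\subseteq J$; note it is the suitedness of $J$, not of $I$ (which is what you invoked), that does the work.
\end{itemize}

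Your density argument is fine. The paper itself does not prove this theorem; it recalls it from \cite[pp.~143--147]{PP21}. A direct verification like yours is reasonable, but it has to be carried out with the frame structure just described.
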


For a subfit frame, we will call $\w L := \J(L)_s$ the \emph{Wallman extension} of $L$. The construction of $\w L$ does not require choice. Its relation with the classical Wallman compactification of a $T_1$-space will follow below from Isbell's Spatiality Theorem. 

\begin{definition}[McKinsey--Tarski algebras]
\leavevmode
\begin{enumerate}
    \item An \emph{MT-algebra} is a pair $(M,\square)$, where $M$ is a complete boolean algebra and $\square$ is an interior operator on $M$, that is,
\[
\square 1=1,\qquad
\square(a\wedge b)=\square a\wedge \square b,\qquad
\square a\leq a,\qquad
\square a\leq \square\square a
\]
for all $a,b\in M$. 
\item An \emph{MT-morphism} $h\colon M\to N$ is a complete boolean homomorphism between MT-algebras satisfying
$h(\square a)\leq \square h(a)$
for every $a\in M$. 
\end{enumerate}
\end{definition}

As usual, we write simply $M$ for $(M,\square)$ and put $\diamond a :=\neg\square\neg a$. An element $a\in M$ is \emph{open} if $a=\square a$ and \emph{closed} if $a=\diamond a$. We write $\O(M)$ and $\C(M)$ for the collections of open and closed elements of $M$, respectively. The open elements form a frame, and every MT-morphism restricts to a frame homomorphism between the corresponding frames of open elements. Thus, writing $\MT$ for the category of MT-algebras and MT-morphisms, the assignment $M\mapsto \O(M)$ defines a functor from $\MT$ to $\Frm$.

\begin{theorem}[\cite{BR23}]
The functor $\O\colon \MT\to\Frm$ is essentially surjective.
\end{theorem}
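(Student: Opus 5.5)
Given a frame $L$, I will build an MT-algebra whose frame of opens is isomorphic to $L$, and I will do it without any choice principle. The natural candidate is the \emph{Booleanization of the frame of nuclei}, or more simply the free complete Boolean extension of $L$. Concretely, I would take the frame $N(L)$ of nuclei on $L$, or equivalently the coframe of sublocales. It contains $L$ via the closed nuclei $c_a = a\vee(-)$, and it also contains their complements, the open nuclei $o_a = a\to(-)$.

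The cleanest route I see runs through the Booleanization of the assembly $N(L)$. Let $B$ be the Booleanization of $N(L)$, that is, the complete Boolean algebra of its regular elements. The composite $L \to N(L) \to B$ need not be injective, so I would rather use the following standard choice-free alternative. Take $M$ to be the complete Boolean algebra of all sublocales $S$ of $L$ that are joins of \emph{complemented} sublocales, and define
\[
\square S = \bigvee\{ \mathfrak o(a) \mid \mathfrak o(a)\subseteq S\},
\]
where $\mathfrak o(a)$ is the open sublocale. This is the construction I expect \cite{BR23} uses.

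Because I am not sure that this lattice is Boolean in general, I would commit instead to the simplest rigorous choice-free option: the Funayama-type embedding.

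\textbf{Step 1.} Let $M$ be the complete Boolean algebra generated freely subject to $L$, realized as follows. Take the Booleanization $B$ of the frame $N(L)$, and check that $a \mapsto$ the regularization of the closed nucleus $c_a$ is an injective frame homomorphism $e\colon L\to B$. Injectivity uses only the fact that distinct closed nuclei stay distinct after double negation, and this is the delicate point. So in practice I would use Funayama's theorem, which is choice-free via nuclei: every frame embeds into a complete Boolean algebra by a map preserving finite meets and arbitrary joins.

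\textbf{Step 2.} Given such an embedding $e\colon L\hookrightarrow B$, define
\[
\square x=\bigvee\{e(a)\mid e(a)\le x\}.
\]
This element lies in $e[L]$ because $e$ preserves joins. The axioms then follow quickly:
\begin{itemize}
\item $\square 1=1$ because $e(1)=1$;
\item $\square x\le x$ is clear from the definition;
\item idempotence holds because $\square x$ is itself in the image of $e$;
\item preservation of finite meets follows from the infinite distributivity of $L$ together with the fact that $e$ preserves $\wedge$.
\end{itemize}

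\textbf{Step 3.} The open elements of $(B,\square)$ are exactly $e[L]$, and $e$ is a frame isomorphism onto this set. Hence $\O(B)\cong L$, which gives essential surjectivity.

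The main obstacle is Step 1: obtaining a choice-free embedding of an arbitrary frame into a complete Boolean algebra that preserves finite meets and all joins. Spatial methods are unavailable here, because they need points and hence choice. I would first try the Booleanization of the congruence (nucleus) frame $N(L)$. Closed nuclei $c_a$ are complemented in $N(L)$, with complement the open nucleus $o_a$. Complemented elements are regular, so $c_a\mapsto c_a$ lands in $B$ injectively. It preserves finite meets, and it preserves joins because joins of closed nuclei are closed. I would then verify that the regularization of a join of regular elements agrees, which holds since $\bigvee c_{a_i}=c_{\bigvee a_i}$ is already regular. This argument uses only intuitionistically valid constructions, so it is choice-free.
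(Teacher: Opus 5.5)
Your committed argument (the final paragraph together with Steps 2 and 3) is correct. Its skeleton is the paper's: produce a frame embedding $e\colon L\hookrightarrow B$ into a complete Boolean algebra, put $\square x=\bigvee\{e(a)\mid e(a)\le x\}$, and read off $\O(B)=e[L]\cong L$.

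The difference lies in how you obtain the Funayama embedding.
\begin{itemize}
\item The paper takes $\F L$ to be the MacNeille completion of the Boolean envelope of $L$. The substantive point there is that joins of $L$ remain joins in the Boolean envelope: write an upper bound as $\bigwedge_j(\neg a_j\vee b_j)$ and use frame distributivity. After that, the MacNeille completion preserves these joins automatically.
\item You take the regular elements of the assembly $N(L)$. Join preservation then comes for free from $\bigvee c_{a_i}=c_{\bigvee a_i}$. Injectivity comes from $c_a(0)=a$ together with the fact that complemented elements are regular.
\end{itemize}
Your route avoids normal forms and is transparently choice-free, at the cost of invoking the nontrivial (though choice-free) theorem that nuclei form a frame. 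The paper's route is more hands-on and makes explicit that the elements $a\wedge\neg b$ are join-dense.

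In fact the two algebras coincide. Every nucleus satisfies $k=\bigvee_a(c_{k(a)}\wedge o_a)$, so the complemented elements $c_a\wedge o_b$ are join-dense in your $B$. Hence $B$ is the MacNeille completion of the Boolean subalgebra generated by the closed nuclei, and that subalgebra is the Boolean envelope of $L$.

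When writing it up, cut the exploratory detours:
\begin{itemize}
\item The assertion that $L\to N(L)\to B$ ``need not be injective'' is false; your own last paragraph refutes it.
\item Injectivity is not a delicate point.
\item The paper does not use joins of complemented sublocales.
\end{itemize}
Also, meet preservation of $\square$ in Step 2 does not need distributivity of $L$. It needs only that $\square x=e(a_x)\le x$ for $a_x=\bigvee\{a\mid e(a)\le x\}$, and that $e$ preserves binary meets.
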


The essentially surjectivity is shown as follows: For each frame $L$, the MT-algebra $M$ such that $L \cong \O M$ can be constructed by taking the {\em Funayama envelope} $\F L$ of $L$ \cite{funayama59}. One construction of $\F L$ is to take the MacNeille completion of the boolean envelope of $L$ \cite[Sec.~II.4]{Gra2011}.
In particular, for each frame $L$, taking the MacNeille completion of its boolean envelope, yields an MT-algebra $\F L$ whose frame of open elements is isomorphic to $L$, witnessing essential surjectivity. This is related to Funayama's Theorem \cite{funayama59}, hence we call $\F L$ the \emph{Funayama envelope} of $L$.

For a topological space $X$, let $\P X$ denote the MT-algebra $(\P(X),\int)$. If $f\colon X\to Y$ is continuous, then the inverse image gives an MT-morphism.
This defines a contravariant functor $\P\colon \Top\to\MT$.
Conversely, for an MT-algebra $M$, let $\at(M)$ be the set of atoms of $M$. For $a\in M$, put $\eta_M(a)=\{x\in\at(M)\mid x\leq a\}$. The collection $\eta_M[\O(M)]$ is a topology on $\at(M)$. If $h\colon M\to N$ is an MT-morphism, then the left adjoint of $h$ restricts to a continuous map $\at(h)\colon \at(N)\to \at(M)$. Thus, $\at$ is a contravariant functor from $\MT$ to $\Top$.
We call an MT-algebra \emph{spatial} if its boolean reduct is atomic, and write $\SMT$ for the full subcategory of $\MT$ consisting of spatial MT-algebras. Not every MT-algebra is spatial: for instance, the regular open algebra $\RO(X)$ of a space without isolated points is an atomless complete Boolean algebra (see, e.g., \cite{Joh84,PP12}), and hence non-spatial even when equipped with the discrete interior operator.

\begin{theorem}[\cite{BR23}]
\label{dualadjunctionMT&Top}
The functors $\P$ and $\at$ yield a dual adjunction between $\Top$ and $\MT$. This adjunction restricts to a dual equivalence between $\Top$ and $\SMT$.
\end{theorem}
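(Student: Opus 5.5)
We establish the adjunction through explicit unit maps and then restrict it to spatial algebras. For a space $X$ the unit is $\varepsilon_X\colon X\to\at(\P X)$, $x\mapsto\{x\}$. The atoms of $\P(X)$ are exactly the singletons, so $\varepsilon_X$ is a bijection. Moreover, $\eta_{\P X}(U)=\{\{x\}\mid x\in U\}$ for $U$ open, so the topology $\eta_{\P X}[\O(\P X)]$ on the atoms corresponds to the original topology, and $\varepsilon_X$ is a homeomorphism. For an MT-algebra $M$ the unit is $\eta_M\colon M\to\P(\at M)$.

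The steps for $\eta_M$ are as follows.
\begin{enumerate}
\item $\eta_M$ is a complete boolean homomorphism. It preserves arbitrary meets because $x\le\bigwedge S$ iff $x\le s$ for all $s\in S$. It preserves complements because an atom $x$ satisfies $x\le a$ or $x\le\neg a$, but not both. Hence it also preserves arbitrary joins.
\item $\eta_M$ is an MT-morphism into $(\P(\at M),\int)$. For any $a$, $\eta_M(\square a)$ is open by definition of the topology and lies inside $\eta_M(a)$. Hence $\eta_M(\square a)\subseteq\int\eta_M(a)$.
\end{enumerate}

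Next we check naturality. For an MT-morphism $h\colon M\to N$, write $h_*$ for the left adjoint of its underlying complete boolean homomorphism, so that $h_*(b)=\bigwedge\{a\mid b\le h(a)\}$.
\begin{enumerate}
\item $h_*$ sends atoms to atoms. If $y$ is an atom, then $h_*(y)\ne0$, since $h(0)=0$ and $y\ne0$. If $h_*(y)$ were not an atom, it would split as $c\vee\neg c$ with both parts nonzero in the relevant sense. Then $y\le h(c)$ or $y\le h(\neg c)=\neg h(c)$, which contradicts the minimality of $h_*(y)$.
\item $\at(h)$ is continuous. For open $a\in M$ we have $\at(h)^{-1}(\eta_M(a))=\eta_N(h(a))$, because $h_*(y)\le a$ iff $y\le h(a)$. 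Since $h(a)\le\square h(a)$, the element $h(a)$ is open, so $\eta_N(h(a))$ is open in $\at(N)$.
\item The same identity $\at(h)^{-1}\circ\eta_M=\eta_N\circ h$ is exactly the naturality of $\eta$.
\item Naturality of $\varepsilon$ is a direct check: for continuous $f$, the inverse image map has left adjoint the direct image, which sends $\{y\}$ to $\{f(y)\}$.
\end{enumerate}

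The triangle identities then reduce to the fact that $\eta_{\P X}$ composed with $\P(\varepsilon_X)$ is the identity, and $\at(\eta_M)\circ\varepsilon_{\at M}$ is the identity. Both are immediate on singletons and atoms. This gives the dual adjunction.

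For the equivalence, we observe that $\P X$ is always atomic, so $\P$ lands in $\SMT$, and we show that $\eta_M$ is an isomorphism whenever $M$ is atomic. Injectivity holds because in an atomic complete boolean algebra $a=\bigvee\eta_M(a)$. Surjectivity holds because any set $A$ of atoms equals $\eta_M(\bigvee A)$. The second equality uses atomicity together with distributivity of complete boolean algebras: an atom below $\bigvee A$ meets some member of $A$, and so equals it.

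It remains to see that the inverse of $\eta_M$ is also an MT-morphism, that is, $\int\eta_M(a)\subseteq\eta_M(\square a)$. An open set contained in $\eta_M(a)$ has the form $\eta_M(u)$ with $u$ open. Injectivity of $\eta_M$ and order-reflection give $u\le a$, hence $u\le\square a$. Therefore $\int\eta_M(a)$ is the union of such $\eta_M(u)$, each contained in $\eta_M(\square a)$.

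We expect this last step, that the inverse of $\eta_M$ preserves $\square$, to be the main subtlety, since the rest is routine bookkeeping.
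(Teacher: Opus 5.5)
Your proposal is correct. It is essentially the standard argument behind the result of \cite{BR23}, which the paper cites without proof: Tarski's duality between sets and complete atomic Boolean algebras via $\eta_M$ and the left adjoint $h_*$, upgraded by checking three things, namely that $\eta_M$ is an MT-morphism, that $\at(h)$ is continuous, and that for atomic $M$ the inverse of $\eta_M$ also respects $\square$. One small correction: surjectivity of $\eta_M$, i.e.\ $A=\eta_M(\bigvee A)$ for a set $A$ of atoms, holds in any complete Boolean algebra by distributivity, so atomicity is needed only for injectivity.
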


Let $M$ be an MT-algebra. For $a,b\in M$, we write $a\prec b$ if $\diamond a\leq \square b$, and write $a\pprec b$ if there is a family $\{c_p\mid p\in [0,1]\cap\mathbb{Q}\}\subseteq M$ such that $a\leq c_0$, $c_1\leq b$, and $c_p\prec c_q$ whenever $p<q$. Moreover, if desired, the  family $\{c_p\}$ may be chosen to consist entirely of open elements, or entirely of closed elements. If $a$ and $b$ are open elements, then these notions coincide with the corresponding relations $\prec$ and $\pprec$ on the frame $\O(M)$.

We say that subset $S$ of an MT-algebra $M$ \emph{join-generates} $M$ if every element of $M$ is the join of a family of elements from $S$. We also say that two elements of an MT-algebra are \emph{disjoint} if their meet is $0$.

\needspace{2em}
\begin{definition}[Separation axioms for MT-algebras]
An MT-algebra $M$ is
\begin{enumerate}
\item \emph{$T_1$} or a \emph{$T_1$-algebra} if its closed elements join-generate $M$.
\item \emph{Hausdorff} or a \emph{$T_2$-algebra} if the elements $a=\bigwedge\{\diamond u\mid a\leq u\in\O(M)\}$ join-generate $M$.
\item \emph{regular} if it is $T_1$ and $
u=\bigvee\{v\in\O(M)\mid v\prec u\}$
for each $u\in\O(M)$.
\item \emph{completely regular} if it is $T_1$ and $u=\bigvee\{v\in\O(M)\mid v\pprec u\}$
for each $u\in\O(M)$.
\item \emph{normal} if it is $T_1$ and for all disjoint $c,d\in\C(M)$ there exist disjoint $u,v\in\O(M)$ such that $c\leq u$ and $d \leq v$.
\end{enumerate}
\end{definition}

The following lemma will be used in \cref{sec:compactifications}.

\begin{lemma}
    Let $h: M \to N$ be an onto MT-morphism.
    \begin{enumerate}
    [cref=lemma]
        \item If $M$ is $T_1$, so is $N$.\label{prop:T1 under onto MTmorphism}
        \item If $M$ is Hausdorff, so is $N$. \label{prop:T2 under onto MTmorphism}
    \end{enumerate}
\end{lemma}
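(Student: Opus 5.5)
The plan is to use that an onto MT-morphism maps closed elements to closed elements, and then push join-generating families forward along $h$. Since $h$ is a complete boolean homomorphism, it preserves $\neg$ and arbitrary joins and meets. From $h(\square a)\leq\square h(a)$ we get $h(\diamond a)=\neg h(\square\neg a)\geq \neg\square\neg h(a)=\diamond h(a)$. If $c\in\C(M)$, then $h(c)=h(\diamond c)\geq \diamond h(c)\geq h(c)$, so $h(c)\in\C(N)$. Similarly $h$ maps open elements to open elements, since $h(u)=h(\square u)\leq\square h(u)\leq h(u)$.

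For (1), let $b\in N$ and pick $a\in M$ with $h(a)=b$, using that $h$ is onto. Because $M$ is $T_1$, we can write $a=\bigvee_i c_i$ with each $c_i\in\C(M)$. Then $b=\bigvee_i h(c_i)$, and each $h(c_i)$ is closed in $N$. Hence $\C(N)$ join-generates $N$.

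For (2), call $a$ \emph{saturated} if $a=\bigwedge\{\diamond u\mid a\leq u\in\O(M)\}$. It suffices to show that $h$ sends saturated elements of $M$ to saturated elements of $N$; the join argument from (1) then finishes the proof. Let $a$ be saturated in $M$. We always have $h(a)\leq\bigwedge\{\diamond v\mid h(a)\leq v\in\O(N)\}$, so only the reverse inequality needs work.

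First, $h(a)=\bigwedge_{u}h(\diamond u)$, where $u$ ranges over the open elements above $a$. Second, $h(\diamond u)$ is closed and lies above $h(u)$, so $h(\diamond u)\geq\diamond h(u)$. Third, $h(u)$ is open in $N$ and lies above $h(a)$. These three facts give $\bigwedge\{\diamond v\mid h(a)\leq v\in\O(N)\}\leq\bigwedge_u\diamond h(u)\leq \bigwedge_u h(\diamond u)=h(a)$. Therefore $h(a)$ is saturated.

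I expect the second step of (2) to be the main obstacle. The difficulty is to make sure the inequality for $h(\diamond u)$ runs in the useful direction. Here it comes out correctly because $h(\diamond u)\geq\diamond h(u)$, so that meet is bounded below by the meet over the open elements $h(u)$. Ontoness is used only to lift an arbitrary $b\in N$ to some $a\in M$.
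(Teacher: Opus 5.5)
Your proof is correct and follows essentially the paper's argument. Both show that $h$ preserves closed elements and saturated elements via $h(\diamond a)\geq\diamond h(a)$, and both use the chain $h(a)=\bigwedge h(\diamond u)\geq\bigwedge\diamond h(u)\geq\bigwedge\{\diamond v\mid h(a)\leq v\in\O(N)\}$. The only cosmetic difference is that you push the whole join decomposition forward, whereas the paper shows that every non-zero element of $N$ lies above a non-zero closed (resp.\ saturated) element.
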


\begin{proof}
     Let $0 \neq a \in N$, then there exists $ 0 \neq b \in M$ such that $h(b)=a$. 
     
     \tref{prop:T1 under onto MTmorphism} Now suppose $M$ is $T_1$, then $b=\bigvee\{c \in \C(M) \mid c \leq b\}$, so there exists $c \in \C(M)$ such that $c \leq b$ and $h(c) \neq 0$. Since $h(c) \in \C(N)$ and $h(c) \leq h(b)=a$, every non-zero element of $N$ lies above a closed element, and hence $N$ is $T_1$.

    \tref{prop:T2 under onto MTmorphism} Suppose $M$ is Hausdorff. Then similar to \tref{prop:T1 under onto MTmorphism}, there exists $c \leq b$ such that $c = \bigwedge\{\diamond u \mid c \leq u \in \O(M)\}\}$ and 
    and $h(c) \neq 0$. Since 
    $h(\diamond u) \geq \diamond h(u)$ for any MT-morphism,
    \begin{align*}
        h(c) &= \bigwedge\{h(\diamond u) \mid c \leq u \in \O(M)\} \\
        &\geq \bigwedge\{\diamond h(u) \mid c \leq u \in \O(M)\} \\
        &\geq \bigwedge\{\diamond v \mid h(c) \leq v \in \O(N)\} \\
        &\geq h(c),
    \end{align*}
    so $h(c) = \bigwedge\{\diamond v \mid h(c) \leq v \in \O(N)\}$ 
    and $h(c) \leq a$. Thus, every non-zero element of $N$ lies above an element of this form, and hence $N$ is Hausdorff.
\end{proof}

The separation axioms of MT-algebras and frames are closely related. In particular, a $T_1$-algebra is regular, completely regular, or normal iff its frame of opens has the corresponding property. Moreover, the Funayama envelope of a frame is $T_1$ iff the frame is subfit. This yields the following.

\needspace{2em}
\begin{proposition}
    Let $L$ be a frame.
    \begin{enumerate}[cref=proposition]
        \item $L$ is subfit iff $\F L$ is $T_1$.\label{prop:subfit-iff-t1}
        \item $L$ is regular iff $\F L$ is regular.
        \item $L$ is completely regular  iff $\F L$ is completely regular.
    \end{enumerate}
\end{proposition}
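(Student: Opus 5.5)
The proof combines the facts stated just before the proposition with the defining property of the Funayama envelope, namely that $\O(\F L)\cong L$ as frames. Two facts do the work: (i) $\F L$ is $T_1$ iff $L$ is subfit; (ii) a $T_1$-algebra is regular, respectively completely regular, iff its frame of opens is. Item (1) is exactly fact (i), so the real content is in items (2) and (3). We handle those by passing between $\F L$ and $L$ through $\O(\F L)\cong L$.

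For the forward directions of (2) and (3), suppose $L$ is regular (resp.\ completely regular).
\begin{enumerate}
\item First show $L$ is subfit; this is the main obstacle. Let $a\nleq b$ in $L$. Regularity gives $a=\bigvee\{c\mid c\prec a\}$, so there is $c\prec a$ with $c\nleq b$. Put $w=c^*$. Then $a\vee w=1$ because $c\prec a$. Suppose $b\vee w=1$. Then $c=c\wedge(b\vee c^*)=(c\wedge b)\vee(c\wedge c^*)=c\wedge b$, so $c\leq b$, a contradiction. Hence $b\vee w\neq 1$, and $L$ is subfit.
\item For completely regular $L$, the relation $c\pprec a$ implies $c\prec a$ via the chain $c\leq c_0\prec c_1\leq a$. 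So complete regularity implies regularity, and the same argument gives subfitness.
\item By fact (i), $\F L$ is $T_1$.
\item By fact (ii), $\F L$ is regular (resp.\ completely regular) iff $\O(\F L)$ is. Since $\O(\F L)\cong L$ and these frame properties are invariant under isomorphism, $\F L$ is regular (resp.\ completely regular).
\end{enumerate}
The only delicate point in the last step is that fact (ii) concerns the relations $\prec$ and $\pprec$ on opens. These coincide with the frame relations on $\O(M)$, as remarked earlier, so the transfer is legitimate.

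For the reverse directions, suppose $\F L$ is regular (resp.\ completely regular). By definition $\F L$ is then $T_1$. Fact (ii) gives that $\O(\F L)$ is regular (resp.\ completely regular), and transporting along $\O(\F L)\cong L$ shows the same for $L$.
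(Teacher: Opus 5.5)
Your proposal is correct and follows the same route as the paper, which simply combines the facts that $\F L$ is $T_1$ iff $L$ is subfit and that a $T_1$-algebra is (completely) regular iff its frame of opens is, transported along $\O(\F L)\cong L$. The one thing you add is an explicit check that (completely) regular frames are subfit; the paper leaves this implicit, but it is needed in the forward directions before the transfer fact, which applies only to $T_1$-algebras, can be used.
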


\begin{remark}
Under the assumption of subfitness, analogous statements hold for Hausdorffness (in the sense of \cite[p.~43]{PP21}) and normality. 
\end{remark}

The following will be used in \cref{sec:wallman} and is a direct consequence of \cite[Thm.~6.5]{BR23}.

\begin{theorem} \label{lem:T1-iso}
    If $M$ is a $T_1$-algebra, then $M \cong \F\O(M)$. In particular, for $T_1$-algebras $M$ and $N$, the following are equivalent.
    \begin{enumerate}[cref=lemma]
        \item $M \cong N$;
        \item $\O(M)\cong \O(N)$;
        \item $\C(M) \cong \C(N)$.
    \end{enumerate}
\end{theorem}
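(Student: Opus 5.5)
The plan has two parts. First I will prove that a $T_1$-algebra $M$ is isomorphic to $\F\O(M)$. Then I will derive the equivalences from that isomorphism together with the complement duality between open and closed elements.

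For the first part I use the characterization of the Funayama envelope from \cite[Thm.~6.5]{BR23}, on which the statement is based. Take a complete boolean algebra $B$ with a subframe $L\subseteq B$ such that $L$ generates $B$ as a boolean algebra and $B$ is a MacNeille completion of the boolean subalgebra generated by $L$. Then $B\cong\F L$, and the isomorphism extends the identity on $L$. So it suffices to check two things for $L=\O(M)\subseteq M$.

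The first check is density: every nonzero element of $M$ lies above a nonzero element of the boolean subalgebra $A$ generated by $\O(M)$. This is exactly where the $T_1$ hypothesis enters. By definition, every element of $M$ is a join of closed elements, and closed elements are complements of opens, so they lie in $A$. Hence $A$ is join-dense in $M$. Taking complements, $A$ is also meet-dense, so $M$ is the MacNeille completion of $A$.

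The second check is that the boolean envelope of the frame $\O(M)$ embeds as $A$. This holds because the boolean algebra generated by any bounded distributive sublattice of a boolean algebra is its free boolean extension.

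Once the MacNeille completion is identified, the interior operator on $M$ has to match the one on $\F\O(M)$, since in both algebras $\square a$ is the largest open element below $a$ and the opens correspond. I expect the main obstacle to be this verification: that the isomorphism of complete boolean algebras carries $\O(M)$ onto the open elements of $\F\O(M)$, and hence is an MT-isomorphism. It needs the fact that the opens of $\F L$ are exactly the images of $L$, which I take from the construction of $\F$.

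For the equivalences, (1)$\Rightarrow$(2) is immediate, because an MT-isomorphism restricts to a frame isomorphism of the opens. For (2)$\Rightarrow$(1), functoriality of $\F$ on isomorphisms gives $M\cong\F\O(M)\cong\F\O(N)\cong N$. For (2)$\Leftrightarrow$(3), the map $u\mapsto\neg u$ is a lattice anti-isomorphism from $\O(M)$ onto $\C(M)$. A lattice isomorphism $\O(M)\cong\O(N)$ therefore transports to a lattice isomorphism $\C(M)\cong\C(N)$ via $c\mapsto\neg f(\neg c)$, and the same formula works in the other direction. Here the isomorphism in (3) is read as an isomorphism of the lattices of closed elements.
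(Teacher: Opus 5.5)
Your proof is correct and follows the same route as the paper, which gives no argument beyond citing \cite[Thm.~6.5]{BR23}. You make that citation explicit. The $T_1$ hypothesis makes the boolean subalgebra generated by $\O(M)$ join-dense, hence meet-dense, in $M$. So $M$ is its MacNeille completion, and therefore $M\cong\F\O(M)$ via an isomorphism that fixes the opens. The equivalences then follow from functoriality of $\F$ on isomorphisms and the complement anti-isomorphism $\O(M)\to\C(M)$.

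One slip needs fixing: $L$ does not in general generate $B$ as a boolean algebra. For instance, for the cofinite topology on $\mathbb N$ the opens generate only the finite--cofinite subalgebra of $\P(\mathbb N)$. Drop that clause and keep only the MacNeille-completion condition, which is what your density check actually verifies.
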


An element $a$ of an MT-algebra $M$ is \emph{compact} if $a\leq \bigvee S$ for some $S\subseteq \O(M)$, implies there is a finite $T\subseteq S$ such that $a\leq\bigvee T$. We call $L$ \emph{compact} if its top element is compact. 
It is clear that an MT-algebra is compact iff its frame of open elements is compact. 
It is straightforward to verify that following well-known facts about compact subsets extend to the setting of MT-algebras. 
\begin{lemma}[{see, e.g., \cite[Lems.~3.6 and 6.11]{BR+25}}]
    Let $M$ be an MT-algebra.
    \begin{enumerate}[cref=lemma]
        \item If $c \in \C(M)$ and there exists compact $k$ with $c \leq k$, then $c$ is compact.\label{closed-below-compact}
        \item An element $a \in M$ is compact iff for every $S \subseteq \C(M)$ such that $a \wedge \bigwedge S = 0$ there is a finite $T \subseteq S$ such that $a \wedge \bigwedge T = 0$.\label{compactness-meets}
    \end{enumerate}
\end{lemma}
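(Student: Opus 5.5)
We prove the two parts in order, following the classical arguments for subsets of a space. Throughout we use the definition of compactness in $M$: covers of $a$ by open elements admit finite subcovers. We also use that $\neg$ exchanges $\O(M)$ and $\C(M)$, since $\neg\square b=\diamond\neg b$.

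For \tref{closed-below-compact}, let $c\in\C(M)$ with $c\le k$ and $k$ compact, and suppose $c\le\bigvee S$ with $S\subseteq\O(M)$. Since $c$ is closed, $\neg c$ is open, so $S\cup\{\neg c\}$ is a family of open elements. Moreover $k\le 1=c\vee\neg c\le\bigvee S\vee\neg c$. By compactness of $k$ there is a finite $T\subseteq S$ with $k\le\bigvee T\vee\neg c$ (we may assume $\neg c$ is included). Meeting with $c$ and using $c\le k$ and Boolean distributivity gives $c\le c\wedge(\bigvee T\vee\neg c)=c\wedge\bigvee T\le\bigvee T$. Hence $c$ is compact.

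For \tref{compactness-meets}, we translate between open covers and families of closed elements by complementation. Given $S\subseteq\C(M)$, put $S'=\{\neg s\mid s\in S\}\subseteq\O(M)$. Every family of open elements arises this way from the family of closed complements. Since $M$ is a complete Boolean algebra, De Morgan's law $\neg\bigwedge S=\bigvee S'$ holds for arbitrary families. Hence $a\wedge\bigwedge S=0$ iff $a\le\neg\bigwedge S=\bigvee S'$. For a finite $T\subseteq S$, with $T'$ the corresponding subset of $S'$, likewise $a\wedge\bigwedge T=0$ iff $a\le\bigvee T'$. So the condition in the lemma is literally the compactness condition for $a$, rewritten via this bijection.

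Neither step presents a real obstacle; the only care needed is that arbitrary De Morgan laws hold in the complete Boolean algebra $M$, and that complements of closed elements are open and conversely.
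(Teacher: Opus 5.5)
Your proof is correct. It is the standard argument the paper relies on: in part (1) you add $\neg c$ to the open cover, and in part (2) you use complementation and the infinite De Morgan law to pass between open covers of $a$ and families of closed elements meeting $a$ in $0$. The paper gives no proof of its own; it cites \cite[Lems.~3.6 and 6.11]{BR+25} and notes that the classical facts about compact subsets extend straightforwardly to MT-algebras.
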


The following theorem \cite[12.5]{Noeb54} is the spatiality result needed below. For the convenience of the reader, we prefix results that are choice-dependent by an asterisk.
\begin{*theorem}[N\"obeling's Spatiality Theorem]\label{noeb-spatiality}
    Compact $T_1$-algebras are spatial.
\end{*theorem}
 Isbell's Spatiality Theorem \cite{Isb72} is a direct corollary as the frame of opens of a spatial MT-algebra is spatial (see \cite[Prop.~4.11]{BR23}). In fact, if $M$ is spatial, then $\O(M) \cong \O(\P(\at(M)) = \Omega(\at(M))$, so $\O(M)$ is exactly the frame of opens of the space of atoms of $M$.

\begin{*corollary}[Isbell's Spatiality Theorem] \label{cor:isbell}
    Compact subfit frames are spatial. In particular, for every compact subfit frame $L$ there exists a compact $T_1$-space $X$ such that $L \cong \Omega(X)$.
\end{*corollary}

\begin{proof}
    Let $L$ be a compact subfit frame. Then $\F L$ is a compact $T_1$-algebra by \cref{prop:subfit-iff-t1}, so it is spatial by N\"obeling's Spatiality Theorem. Let $X = \at (\F L)$. Then $L \cong \Omega(X)$, and since $\P(X) \cong \F L$ is compact and $T_1$, so is $X$.
\end{proof}

We can now explain the relation between the Wallman extension of frames and the Wallman extension of spaces. 

\begin{*remark}\label{rem:wallman}
Let $X$ be a $T_1$-space. Recall that the \emph{Wallman extension} of $X$ is the space $\w X$ of maximal filters of $\C(X)$, equipped with the topology having $\{\widehat C \mid C \in \C(X)\}$ as a closed base, where $\widehat C=\{F \in \w X \mid C \in F\}$. 
Since $X$ is $T_1$, the frame $L = \Omega(X)$ is subfit (see, e.g., \cite[p.~73]{PP12}), so by \cref{thm:frm-wallman}, $(\w L,\nu)$ is a subfit compactification of $L$. By \cref{cor:isbell}, $\w L \cong \Omega(Y)$ for some compact $T_1$-space $Y$. The points of $Y$ correspond to the maximal proper elements of $\w L$, which are precisely the maximal ideals of $L$, and hence correspond to the points of $\w X$. Moreover, if $C=X\setminus U$ for $U\in L$, then the open of $Y$ corresponding to the principal suited ideal $\downset U$ is the complement of $\widehat C$. Since the principal suited ideals generate $\w L$, the topology on $Y$ is exactly the Wallman topology. Thus, $Y$ is homeomorphic to $\w X$.
\end{*remark}

\section{Compactifications of MT-algebras}
\label{sec:compactifications}

In this section, we introduce compactifications of MT-algebras and relate them to compactifications of topological spaces and frames. We then construct an MT-algebraic analogue of the Alexandroff one-point compactification. Consequently, we obtain that every MT-algebra has a compactification. 
However, assuming choice, we then observe that the Alexandroff one-point compactification of an MT-algebra can only be $T_1$ if the original algebra is $T_1$ and spatial. This turns out not to simply be an obstruction of the one-point construction, but rather an obstruction of compactifications of MT-algebras in general. Assuming choice, non-spatial MT-algebras do not admit compactifications that are $T_1$. Finally, without assuming choice, we show that the Alexandroff extension of an MT-algebra is a Hausdorff compactification iff the original algebra is non-compact, locally compact, and Hausdorff.

\subsection{Compactifications and their relation to spaces and frames}

We begin by motivating the definition. Recall that a one-to-one continuous map is an \emph{embedding} if it is a homeomorphism onto its image. By a \emph{compactification} of a topological space $X$ we mean a pair $(Y,f)$ such that $Y$ is compact, $f:X \to Y$ is an embedding, and $f(X)$ is dense in $Y$. We call $(Y,f)$ a \emph{$T_1$-compactification} (resp.~\emph{Hausdorff compactification}) if, in addition, $Y$ is a $T_1$-space (resp.~Hausdorff space). The following simple observations describe density and embeddings using inverse-image maps and motivate the corresponding definitions for MT-algebras.
\begin{lemma} \label{lem: cts maps density}
Let $X$ and $Y$ be topological spaces and let $f: X \to Y$ be continuous. Then:
    \begin{enumerate}[cref=lemma]
    \item \label{f dense iff f-1 dense}
    $f(X)$ is dense in $Y$ iff $f^{-1}(\int A)=\varnothing$ implies $\int A=\varnothing$ for every $A\subseteq Y$;
    \item \label{f 1-1 iff f-1 onto}
    $f$ is one-to-one iff $f^{-1}:\P Y\to \P X$ is onto;
    \item \label{lem: embedding}
    $f$ is an embedding iff $f^{-1}:\P Y\to \P X$ is onto and, for every open $U\subseteq X$, there exists an open $V\subseteq Y$ such that $f^{-1}(V)=U$.
    \end{enumerate}
\end{lemma}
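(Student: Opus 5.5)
The three parts are elementary point-set facts, and I will prove each by translating statements about $f$ into statements about the inverse-image map $f^{-1}\colon \P Y\to\P X$. Two standard observations do most of the work. First, $f^{-1}$ preserves all Boolean operations. Second, $f^{-1}(B)=\varnothing$ iff $B\cap f(X)=\varnothing$.

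For \tref{f dense iff f-1 dense}, recall that $f(X)$ is dense iff every nonempty open subset of $Y$ meets $f(X)$. By the second observation, this says that $f^{-1}(V)=\varnothing$ implies $V=\varnothing$ for every open $V$. The open sets of $Y$ are exactly the sets $\int A$ with $A\subseteq Y$, since $\int V = V$ for open $V$. So the condition quantified over all $\int A$ is the same as the condition quantified over all open $V$, and the equivalence follows.

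For \tref{f 1-1 iff f-1 onto}, suppose first that $f$ is one-to-one. Given $S\subseteq X$, I claim $f^{-1}(f(S))=S$. Indeed, if $f(x)=f(s)$ with $s\in S$, then injectivity gives $x=s\in S$. So every subset of $X$ is in the image of $f^{-1}$. Conversely, suppose $f^{-1}$ is onto and $f(x)=f(x')$. Choose $B\subseteq Y$ with $f^{-1}(B)=\{x\}$. Then $f(x)\in B$, hence $f(x')\in B$, so $x'\in f^{-1}(B)=\{x\}$, and therefore $x'=x$.

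For \tref{lem: embedding}, I use the standard characterization: a continuous injective map is an embedding iff every open $U\subseteq X$ has the form $f^{-1}(V)$ for some open $V\subseteq Y$. This is because the subspace topology on $f(X)$ consists of the sets $V\cap f(X)$, and for injective $f$ we have $f(f^{-1}(V))=V\cap f(X)$. Pulling back along the bijection $f\colon X\to f(X)$, the topology of $X$ coincides with the subspace topology on $f(X)$ exactly when every open set of $X$ is the preimage of an open set of $Y$. Injectivity corresponds to surjectivity of $f^{-1}$ by \tref{f 1-1 iff f-1 onto}, and the claim follows. This step is the only one requiring any care, namely checking $f(f^{-1}(V))=V\cap f(X)$ and that the bijection transports topologies. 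Even that is routine, so I expect no real obstacle.
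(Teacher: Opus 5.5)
Your proof is correct and follows the same route as the paper. Parts (1) and (2) are the direct verifications the paper calls immediate, and for (3) you reduce via (2) to the standard fact that a continuous injection is an embedding iff every open subset of $X$ is the preimage of an open subset of $Y$, supplying the routine details (such as $f(f^{-1}(V))=V\cap f(X)$) that the paper leaves to the reader.
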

\begin{proof}
    \tref{f dense iff f-1 dense} and \tref{f 1-1 iff f-1 onto} are immediate. For \tref{lem: embedding}, by \tref{f 1-1 iff f-1 onto}, it remains only to observe that a one-to-one continuous map $f: X\to Y$ is an embedding iff every open subset of $X$ is the inverse image of an open subset of $Y$, which is straightforward from the definition.
\end{proof}

This leads to the following definition.

\begin{definition}[Compactifications of MT-algebras]
Let $M,N$ be MT-algebras and let $h: N \to M$ be an MT-morphism.
\begin{enumerate}[cref=definition]
\item We call $h$ \emph{dense} if $h(u)=0$ implies $u=0$ for every $u\in \O(N)$.
\item We call $h$ an \emph{MT-embedding} if both $h$ and $\O(h)$ are onto.
\item A \emph{compactification} of $M$ is a pair $(N,h)$, where $N$ is a compact MT-algebra and $h: N \to M$ is a dense MT-embedding. \label{def:compactification}
\end{enumerate}
\end{definition}
\begin{remark}
\leavevmode
\begin{enumerate}
\item
Embeddings are precisely the extremal monomorphisms in the category of topological spaces. Dually, an MT-morphism $h: N \to M$ is an extremal epimorphism iff both $h$ and $\O(h)$ are onto, that is, iff it is an MT-embedding. We will establish this characterization elsewhere. Thus, a compactification of $M$ may equivalently be described as a dense extremal epimorphism $h:N\to M$ from a compact MT-algebra~$N$.

\item
Extremal epimorphisms, and hence compactifications, admit a concrete description in terms of \emph{relativizations} (see, e.g., \cite[pp.~95--97]{RS63}). Let $N$ be an MT-algebra and let $a\in N$. The relativization of $N$ to $a$ is the MT-algebra
\[
    N_a=\downset a:=\{b\in N\mid b\leq a\},
\]
whose frame of open elements is
\[
    \O(N_a)=\{u\wedge a\mid u\in \O(N)\}.
\]
The map $\pi_a:N\to N_a$ given by $\pi_a(b)=b\wedge a$ is an extremal epimorphism, and every extremal epimorphism with domain $N$ is, up to isomorphism of its codomain, of this form. Consequently, $(N,h)$ is a compactification of $M$ iff $N$ is compact and there exist $a\in N$ and an isomorphism $e:N_a\to M$ such that $h=e\circ\pi_a$ and $\pi_a$ is dense.

\end{enumerate}
\end{remark}
By \cref{lem: cts maps density} and \cref{dualadjunctionMT&Top}, compactifications of MT-algebras extend compactifications of topological spaces in the following sense.

\needspace{2em}
\begin{proposition} \label{thm: compactification iffs}
    Let $f : Y \to X$ be a continuous map, let $M,N$ be spatial MT-algebras, and let $h : N \to M$ be an MT-morphism. Then:
    \begin{enumerate}[cref=proposition]
        \item \label{prop:comp-iff-spaces}
        $(Y,f)$ is a compactification of $X$ iff $(\P Y,f^{-1})$ is a compactification of $\P X$;
        \item \label{prop:comp-iff-alg}
        $(N,h)$ is a compactification of $M$ iff $(\at(N),\at(h))$ is a compactification of $\at(M)$.
    \end{enumerate}
\end{proposition}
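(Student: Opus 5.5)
Part (1) reduces to Lemma \ref{lem: cts maps density} applied term by term. Part (2) then follows from part (1) by transporting along the dual equivalence of Theorem \ref{dualadjunctionMT&Top}.

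For (1), we match the three ingredients of a compactification.

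\emph{Compactness.} $Y$ is compact iff $\P Y$ is compact. The open elements of $\P Y$ are exactly the open subsets of $Y$, and $\P Y$ is compact iff its frame of opens $\Omega(Y)$ is compact.

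\emph{Density.} Every open element of $\P Y$ has the form $\int A$, and conversely. Hence density of $f^{-1}$, that is, $f^{-1}(U)=\varnothing \Rightarrow U=\varnothing$ for open $U$, is precisely the condition in Lemma \ref{f dense iff f-1 dense}.

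\emph{Embedding.} $f^{-1}$ being an MT-embedding means that $f^{-1}$ is onto and $\O(f^{-1})\colon \Omega(Y)\to\Omega(X)$ is onto. The second condition says that every open $U\subseteq X$ equals $f^{-1}(V)$ for some open $V$. This is exactly Lemma \ref{lem: embedding}.

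One small check: $f^{-1}$ is always an MT-morphism because $f$ is continuous, so the definitions line up directly.

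For (2), since $M$ and $N$ are spatial, Theorem \ref{dualadjunctionMT&Top} gives isomorphisms $\varepsilon_M\colon M\to\P(\at M)$ and $\varepsilon_N\colon N\to\P(\at N)$. These are natural, so
\[
\P(\at(h))\circ\varepsilon_N=\varepsilon_M\circ h,
\]
that is, $\at(h)^{-1}$ corresponds to $h$ under these isomorphisms. Each defining property of a compactification is invariant under composing with MT-isomorphisms on either side:
\begin{itemize}
\item compactness of the domain, since isomorphisms preserve the frame of opens;
\item density, since isomorphisms preserve open elements and $0$;
\item surjectivity of the map and of its restriction to opens.
\end{itemize}
Therefore $(N,h)$ is a compactification of $M$ iff $(\P(\at N),\at(h)^{-1})$ is a compactification of $\P(\at M)$. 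By part (1), applied to the continuous map $\at(h)\colon\at N\to\at M$, this holds iff $(\at N,\at h)$ is a compactification of $\at M$.

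The only potentially delicate step is the naturality square in (2). One must make sure that the unit of the adjunction is $a\mapsto\eta(a)$ and that $\at(h)$, defined via the left adjoint of $h$, satisfies $\at(h)^{-1}(\eta_M(a))=\eta_N(h(a))$.

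To verify this, take an atom $x$ of $N$. Then $\at(h)(x)$ is the least $y$ with $x\le h(y)$, and this element is an atom. We need $\at(h)(x)\le a$ iff $x\le h(a)$. This is exactly the adjunction property, so the square commutes and the argument goes through.
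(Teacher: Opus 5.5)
Your proof is correct and follows the paper's argument: (1) comes from Lemma~\ref{lem: cts maps density} together with the fact that $Y$ is compact iff $\P Y$ is, and (2) comes from transporting along $M\cong\P(\at(M))$ and $N\cong\P(\at(N))$, under which $h$ corresponds to $\at(h)^{-1}$, and then applying (1); you simply fill in details the paper leaves implicit. One small slip to fix: since $h\colon N\to M$, the map $\at(h)$ goes from $\at(M)$ to $\at(N)$ (the statement's ``$f\colon Y\to X$'' should likewise read $f\colon X\to Y$), so in your final check $x$ should be an atom of $M$ and the identity should read $\at(h)^{-1}(\eta_N(a))=\eta_M(h(a))$ for $a\in N$, which is exactly what your correctly stated square $\P(\at(h))\circ\varepsilon_N=\varepsilon_M\circ h$ asserts.
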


\begin{proof}
    \tref{prop:comp-iff-spaces} follows from \cref{lem: cts maps density}, together with the fact that $Y$ is compact iff $\P Y$ is compact. For \tref{prop:comp-iff-alg}, since $M$ and $N$ are spatial, $M \cong \P(\at(M))$ and ${N \cong \P(\at(N))}$, and under these isomorphisms $h$ corresponds to $\at(h)^{-1}$. The result therefore follows from~\tref{prop:comp-iff-spaces}.
\end{proof}

We point out that \cref{prop:comp-iff-alg} fails in both directions for non-spatial MT-algebras.

\begin{example} \label{rem: compactifications preserved}
\leavevmode
\begin{enumerate}
    \item Let $M=\P(\mathbb N)\times B$, where $B$ is a complete atomless boolean algebra, and
    \[
        \O(M)=\{(a,0)\mid a\in \P(\mathbb N)\}\cup\{(1,1)\}.
    \]
    Then $M$ is compact since $(1,1)$ is completely join-irreducible in $\O(M)$, but $\at(M)$ is homeomorphic to the space of natural numbers $\mathbb N$ with the discrete topology, so it is not compact. Thus, $M$ paired with the identity morphism is a compactification of $M$, but its $\at(M)$ paired with the identity map is not a compactification of $\at(M)$.

    \item Let $B$ be a non-compact complete atomless boolean algebra and let $M$ be the discrete MT-algebra whose boolean reduct is $B$. Then $M$ paired with the identity morphism is not a compactification, since $M$ is not compact. However, $\at(M)=\varnothing$, and hence $\at(M)$ paired with the identity map is a compactification of $\at(M)$.
\end{enumerate}
\end{example}

\begin{remark}
    By \cref{dualadjunctionMT&Top}, the functors $\P$ and $\at$ are part of a contravariant adjunction. One can check that under this adjunction, extremal epimorphisms in \MT{} are mapped to extremal monomorphisms in \Top{} and vice versa. The failure of \cref{prop:comp-iff-alg} for non-spatial MT-algebras is therefore purely a consequence of the fact that the functor $\at$ neither preserves nor reflects compactness.
\end{remark}

We next compare compactifications of MT-algebras to compactifications of frames. First, we show that compactifications of MT-algebras are preserved under the functor $\O$. In fact, we have the following slightly stronger result:

\begin{proposition} \label{prop: O iff compactifications}
    Let $h : N \to M$ be an onto MT-morphism. 
    Then $(N,h)$ is a compactification of $M$ iff $(\O(N), \O(h))$ is a compactification of $\O(M)$.
\end{proposition}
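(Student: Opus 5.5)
We unpack both sides of the equivalence into three conditions and compare them one at a time. Since $h$ is assumed onto, $(N,h)$ is a compactification of $M$ exactly when (i) $N$ is compact, (ii) $\O(h)\colon \O(N)\to\O(M)$ is onto, and (iii) $h(u)=0$ implies $u=0$ for all $u\in\O(N)$. On the frame side, $(\O(N),\O(h))$ is a compactification of $\O(M)$ exactly when (i$'$) $\O(N)$ is a compact frame, (ii$'$) $\O(h)$ is onto, and (iii$'$) $\O(h)$ is dense as a frame homomorphism. Note that $\O(h)$ is already a frame homomorphism because $\O$ is a functor, so no extra condition is needed there.

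The three pairs match as follows.

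For (i)$\iff$(i$'$), we use the remark preceding the lemma on compact elements: an MT-algebra is compact iff its frame of opens is compact. This holds because compactness of $N$ is defined via covers of $1$ by open elements, and joins in $\O(N)$ agree with joins in $N$, since $\O(N)$ is closed under arbitrary joins in $N$.

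For (ii)$\iff$(ii$'$), the two conditions are literally the same statement.

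For (iii)$\iff$(iii$'$), $\O(h)$ is the restriction of $h$ to $\O(N)$, and the bottom of $\O(M)$ is $0$, the bottom of $M$. So "$\O(h)(u)=0$ implies $u=0$" is verbatim the MT-density condition.

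The only point needing care is a hidden surjectivity requirement. A frame compactification asks for a dense onto homomorphism, while an MT-compactification asks for an MT-embedding, which requires both $h$ and $\O(h)$ to be onto. Surjectivity of $h$ itself is not visible on the frame side, which is exactly why the hypothesis that $h$ is onto is built into the statement. So the main obstacle is bookkeeping rather than mathematics. We must make sure the join in the compactness condition is computed in $N$, which coincides with the join in $\O(N)$ because opens are closed under joins in $N$. We must also confirm that the definition of density for MT-morphisms quantifies only over open elements, so that it matches frame density exactly.
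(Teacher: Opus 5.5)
Your proposal is correct and follows the paper's proof: given that $h$ is onto, being an MT-embedding reduces to $\O(h)$ being onto, $N$ is compact iff $\O(N)$ is, and MT-density of $h$ is verbatim frame density of $\O(h)$.
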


\begin{proof}
    ($\Rightarrow$) Suppose $(N,h)$ is a compactification. Then $h$ is an MT-embedding, so $\O(h)$ is an onto frame homomorphism. Moreover, $\O(N)$ is compact since $N$ is. Also, if ${\O(h)(u)=0}$ for some $u \in \O(N)$, then $h(u)=0$, so $u=0$ by density of $h$. Thus, $\O(h)$ is dense, and hence $(\O(N),\O(h))$ is a compactification of $\O(M)$.
    
    ($\Leftarrow$) Suppose $(\O(N),\O(h))$ is a compactification of $\O(M)$. Since $h$ is onto by hypothesis and $\O(h)$ is onto, $h$ is an MT-embedding. Since $\O(N)$ is compact, $N$ is compact. Finally, if $u \in \O(N)$ and $h(u)=0$, then $\O(h)(u)=0$, so $u=0$ by density of $\O(h)$. Thus, $h$ is dense, and hence $(N,h)$ is a compactification of $M$.
\end{proof}

The assumption that $h$ is onto in \cref{prop: O iff compactifications} is necessary. Indeed, the right-to-left implication can fail if $h$ is not onto, as shown in the following simple example.

\begin{example}
    Let $N$ be the two-element MT-algebra and let $M$ be the four-element indiscrete MT-algebra, meaning that $\O(M)$ consists only of the top and bottom elements. Then the obvious map $h : N \to M$ is an MT-morphism such that $\O(h)$ is an isomorphism. Hence, $(\O(N),\O(h))$ is a compactification of $\O(M)$. However, $(N,h)$ is not a compactification of $M$, since $h$ is not onto.
\end{example}

When $M$ is a $T_1$-algebra, the closed elements join-generate $M$. Consequently, surjectivity of $\O(h)$ is enough to obtain surjectivity of $h$, and the additional assumption in \cref{prop: O iff compactifications} is no longer needed.

\begin{theorem} \label{thm:O-compactifications-T1}
    Let $M$ be a $T_1$-algebra and let $h : N \to M$ be an MT-morphism. Then $(N,h)$ is a compactification of $M$ iff $(\O(N),\O(h))$ is a compactification of $\O(M)$.
\end{theorem}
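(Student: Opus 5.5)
The plan is to reduce everything to \cref{prop: O iff compactifications}. That result already gives the equivalence whenever $h$ is onto. The forward direction needs no extra work, since a compactification is by definition an MT-embedding, so $h$ is onto and \cref{prop: O iff compactifications} applies directly. For the converse, assume $(\O(N),\O(h))$ is a compactification of $\O(M)$. It then suffices to show that $h$ is onto, after which \cref{prop: O iff compactifications} yields that $(N,h)$ is a compactification of $M$.

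To prove surjectivity I use that $\O(h)$ is onto and that $M$ is $T_1$. First I check that $h$ hits every closed element of $M$. Let $c\in\C(M)$. Then $\neg c\in\O(M)$, so $\neg c=h(u)$ for some $u\in\O(N)$, because $\O(h)$ is onto. Since $h$ is a boolean homomorphism, $h(\neg u)=\neg h(u)=c$.

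Next I pass from closed elements to arbitrary ones. Let $a\in M$. Because $M$ is $T_1$, we can write $a=\bigvee S$ with $S\subseteq\C(M)$. For each $c\in S$ pick $b_c\in N$ with $h(b_c)=c$. Since $h$ is a complete boolean homomorphism, it preserves arbitrary joins, so
\[
h\Bigl(\bigvee_{c\in S} b_c\Bigr)=\bigvee_{c\in S} h(b_c)=a .
\]
Hence $h$ is onto.

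The choice of the preimages $b_c$ is harmless, since the paper aims to be choice-free. One can avoid choice entirely by taking $b_c=\bigvee\{b\in N\mid h(b)\le c\}$. Because $h$ preserves joins, $h(b_c)\le c$. Moreover, some specific $b$ with $h(b)=c$ exists, namely $\neg u$ for any $u$ with $h(u)=\neg c$, and this $b$ lies in the set being joined, so $h(b_c)\ge c$. Thus $h(b_c)=c$ without any choice. Alternatively, one can work with the left adjoint of $h$, which exists because $h$ preserves all meets.

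This canonical choice of preimages is the only delicate point. All the other steps are immediate from \cref{prop: O iff compactifications} and the definitions.
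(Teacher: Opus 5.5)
Your proof is correct and matches the paper's argument: the forward direction comes straight from \cref{prop: O iff compactifications}, and for the converse you show that $h$ hits every closed element via $h(\neg u)=\neg h(u)$, then use $T_1$ join-generation together with preservation of arbitrary joins. Your canonical choice of preimages, such as $b_c=\bigvee\{b\in N\mid h(b)\le c\}$, is also valid, and it removes the tacit family of choices in the paper's selection of the elements $u_c$.
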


\begin{proof}
    ($\Rightarrow$) Suppose $(N,h)$ is a compactification of $M$. Then $h$ is onto, so $(\O(N),\O(h))$ is a compactification of $\O(M)$ by \cref{prop: O iff compactifications}.

    ($\Leftarrow$) Suppose $(\O(N),\O(h))$ is a compactification of $\O(M)$. By \cref{prop: O iff compactifications}, it suffices to show that $h$ is onto. Let $a \in M$. Since $M$ is $T_1$,
    \[
        a = \bigvee\{c \in \C(M) \mid c \leq a\}.
    \]
    Since $\O(h)$ is onto, for each $c \in \C(M)$ with $c \leq a$, there exists $u_c \in \O(N)$ such that $h(u_c)=\neg c$. Since $h$ is a Boolean homomorphism, $h(\neg u_c)=c$. Therefore,
    \[
        h\left(\bigvee\{\neg u_c \mid c \in \C(M),\ c \leq a\}\right)
        = \bigvee\{c \in \C(M) \mid c \leq a\}
        = a.
    \]
    Thus, $h$ is onto.
\end{proof}

\begin{remark}
    The functor $\O$ is not full (see \cite[Lem.~4.6]{BR23}). That is, for two MT-algebras $M,N$ there can be frame homomorphisms $g:\O(N)\to\O(M)$ that do not extend to an MT-morphism $N\to M$. In particular, $(\O(N),g)$ can be a compactification of $\O(M)$ while $g$ is not induced by any MT-morphism $N\to M$. Moreover, this obstruction remains even when restricted to $T_1$-algebras, as we will see in \cref{ex:frame-compactification-does-not-lift}. Thus, compactifications of frames cannot, in general, be described by fixing MT-algebras over their domain and codomain and asking for an inducing MT-morphism. It remains open whether every frame homomorphism $g:K\to L$ can be realized, up to isomorphism, as $\O(h)$ for some MT-morphism $h:N\to M$ with $\O(N)\cong K$ and $\O(M)\cong L$. In particular, it remains open whether every compactification of frames can be realized in this way by a compactification of MT-algebras.
\end{remark}

\subsection{The Alexandroff compactification of an MT-algebra}

In this subsection, we show that every non-compact MT-algebra admits a compactification by constructing an MT-algebraic analogue of the Alexandroff one-point compactification. By $2$ we denote the two-element MT-algebra.

\begin{definition}[Alexandroff extension of an MT-algebra]
Let $M$ be an MT-algebra. Set $\A(M)=M\times 2$ and define an operator $\square$ on $\A(M)$ by
\[
    \square(a,b)=
    \begin{cases}
        (\square a,b) & \text{if } \neg\square a \text{ is compact},\\
        (\square a,0) & \text{otherwise}.
    \end{cases}
\]
We call $\A(M)$ the \emph{Alexandroff extension} of $M$.
\end{definition}

\begin{lemma}
    The operator $\square$ defined above is an interior operator on $\A(M)$. Hence, $\A(M)$ is an MT-algebra.
\end{lemma}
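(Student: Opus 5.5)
The plan is to verify the four interior-operator axioms directly on $\A(M)=M\times 2$. Since $\A(M)$ is a product of complete boolean algebras, it is a complete boolean algebra, so only the axioms for $\square$ need checking. Throughout, we use that $\square$ on $M$ is an interior operator, and that the second coordinate of $\square(a,b)$ is $b$ or $0$.

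\textbf{The easy axioms.} First, $\square(1,1)=(1,1)$, because $\neg\square 1=0$ and $0$ is trivially compact (it lies below the empty join). Deflation $\square(a,b)\le(a,b)$ is immediate, since $\square a\le a$ and the second coordinate is either $b$ or $0\le b$. For idempotence, the case where $\neg\square a$ is not compact is clear: $\square(a,b)=(\square a,0)$ is sent to $(\square\square a,\cdot)\ge(\square a,0)$, and the second coordinate is at least $0$. In the compact case we have $\square(a,b)=(\square a,b)$. Since $\square\square a=\square a$, the element $\neg\square\square a=\neg\square a$ is again compact, so $\square\square(a,b)=(\square a,b)$. Combined with deflation, this gives equality.

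\textbf{Meets.} We need
\[
\square\bigl((a,b)\wedge(a',b')\bigr)=\square(a,b)\wedge\square(a',b').
\]
The first coordinates agree because $\square(a\wedge a')=\square a\wedge\square a'$. For the second coordinates, note that
\[
\neg\square(a\wedge a')=\neg\square a\vee\neg\square a'.
\]
So the claim reduces to the following: $\neg\square a\vee\neg\square a'$ is compact iff both $\neg\square a$ and $\neg\square a'$ are compact, at least when $b\wedge b'=1$. When $b\wedge b'=0$, both sides have second coordinate $0$, so nothing needs checking.

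\begin{itemize}
\item[] \emph{Join of compacts is compact.} If $\neg\square a$ and $\neg\square a'$ are compact and their join lies below $\bigvee S$ with $S\subseteq\O(M)$, take finite subcovers for each and combine them.
\item[] \emph{Closed elements below a compact are compact.} Conversely, $\neg\square a$ is a closed element lying below the compact element $\neg\square a\vee\neg\square a'$, so it is compact by \cref{closed-below-compact}. The same argument applies to $\neg\square a'$.
\end{itemize}

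With these two facts, a case check on $b,b'\in 2$ finishes the proof.

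The only step that is not routine is the converse direction in the meet axiom, namely that compactness of the join forces compactness of each closed piece. This is exactly where \cref{closed-below-compact} is used, and it relies on $\neg\square a$ being closed.
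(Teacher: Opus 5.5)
Your proof is correct and follows essentially the same route as the paper. The first three axioms come straight from the corresponding properties of $\square$ on $M$, and meet-preservation reduces to showing that $\neg\square a\vee\neg\square a'$ is compact iff both of its closed joinands are, using the lemma on closed elements below a compact element in one direction and finite joins of compacts in the other. Your case split on $b,b'$ and your remark that $0$ is compact just spell out details the paper leaves implicit.
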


\begin{proof}
    It is clear that $\square(1,1)=(1,1)$ and that $\square(a,b)\leq (a,b)$ for every $(a,b)\in \A(M)$. Moreover, since $\square\square a=\square a$, we have $\square\square(a,b)=\square(a,b)$.
    It remains to show that $\square$ preserves binary meets. Let $(a,b),(c,d)\in \A(M)$. Since
\[
    \neg\square(a\wedge c)=\neg\square a\vee\neg\square c,
\]
and both $\neg\square a$ and $\neg\square c$ are closed. It follows from \cref{closed-below-compact} that $\neg\square(a\wedge c)$ is compact only if both $\neg\square a$ and $\neg\square c$ are compact. The converse follows since finite joins of compact elements are compact.
Therefore,
    \[
        \square((a,b)\wedge(c,d))
        =\square(a,b)\wedge\square(c,d).
    \]
    Thus, $\square$ is an interior operator on $\A(M)$.
\end{proof}

We next show that the Alexandroff extension of a non-compact MT-algebra is a compactification. For this we require the following lemmas.

\begin{lemma}
    Let $M$ be an MT-algebra and $(a,b) \in \A(M)$. 
    \begin{enumerate}[cref=lemma]
        \item $(a,b) \in \O(\A(M))$ iff $a \in \O(M)$ and $b = 1$ implies that $\neg a$ is compact. \label{lem:alex-open}
        \item $(a,b) \in \C(\A(M))$ iff $a \in \C(M)$ and $b = 0$ implies that $a$ is compact. \label{lem:alex-closed} 
    \end{enumerate}
\end{lemma}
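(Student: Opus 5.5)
Both parts follow directly from the definition of $\square$ on $\A(M)=M\times 2$ and from the identity $\diamond=\neg\square\neg$ in a product of boolean algebras. For part (1), I will unfold the equation $\square(a,b)=(a,b)$ and split into the two cases in the definition of $\square$.

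\emph{Case $\neg\square a$ compact.} Then $\square(a,b)=(\square a,b)$. Hence $(a,b)$ is open iff $\square a=a$, that is, iff $a\in\O(M)$.

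\emph{Case $\neg\square a$ not compact.} Then $\square(a,b)=(\square a,0)$. Hence $(a,b)$ is open iff $a\in\O(M)$ and $b=0$.

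To combine the cases, suppose first that $(a,b)$ is open. Then $a\in\O(M)$ in either case. If moreover $b=1$, the second case is impossible, so $\neg a=\neg\square a$ is compact. Conversely, suppose $a\in\O(M)$ and that $b=1$ forces $\neg a$ to be compact. If $\neg a$ is compact, the first case gives $\square(a,b)=(a,b)$. Otherwise $b=0$, and the second case gives $\square(a,b)=(a,0)=(a,b)$. This proves (1).

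For part (2), I will reduce to (1) via complements. Since $\neg(a,b)=(\neg a,\neg b)$, the element $(a,b)$ is closed iff $\neg(a,b)=(\neg a,\neg b)$ is open. Applying (1) to $(\neg a,\neg b)$, this holds iff $\neg a\in\O(M)$ and $\neg b=1$ implies that $\neg\neg a=a$ is compact. Now $\neg a\in\O(M)$ is equivalent to $a\in\C(M)$, and $\neg b=1$ is equivalent to $b=0$. Substituting these equivalences gives exactly the stated condition.

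No step is a genuine obstacle. The only care needed is to use that the top coordinate of $\square(a,b)$ is the one produced by the definition, namely $b$ or $0$.
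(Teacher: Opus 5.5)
Your proof is correct and essentially matches the paper's. Part (1) is the same unfolding of the two cases in the definition of $\square$ on $\A(M)$. For part (2), you reduce to (1) using the fact that $(a,b)$ is closed iff $(\neg a,\neg b)$ is open, whereas the paper computes $\diamond(a,b)$ explicitly as $(\diamond a,b)$ or $(\diamond a,1)$ according to whether $\diamond a$ is compact; the two arguments are equally short, but the paper's explicit formula for $\diamond$ is reused later (e.g.\ $\diamond(u,0)=(\diamond u,0)$ in the Hausdorff theorem).
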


\begin{proof}
    \tref{lem:alex-open} By definition, $(a,b)$ is open iff $a=\square a$ and either $b=0$ or $\neg\square a$ is compact. Since $a=\square a$, this is equivalent to $a \in \O(M)$ and $b=1$ implies that $\neg a$ is compact.

    \tref{lem:alex-closed} We have
    \[
        \diamond(a,b)
        =\neg\square(\neg a,\neg b)
        =
        \begin{cases}
            (\diamond a,b) & \text{if } \diamond a \text{ is compact},\\
            (\diamond a,1) & \text{otherwise.}
        \end{cases}
    \]
    Therefore, $(a,b)$ is closed iff $a \in \C(M)$ and $b=0$ implies that $a$ is compact.
\end{proof}

\begin{lemma}\label{alex-compact}
    $\A(M)$ is compact.
\end{lemma}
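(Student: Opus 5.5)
We argue directly from the definition, using the description of open elements in \cref{lem:alex-open}. Let $S\subseteq\O(\A(M))$ with $\bigvee S=(1,1)$. Joins in $\A(M)=M\times 2$ are computed componentwise, so some $(u_0,b_0)\in S$ must have $b_0=1$. By \cref{lem:alex-open}, $u_0\in\O(M)$ and $\neg u_0$ is compact in $M$.

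Next we cover $\neg u_0$ using the remaining members of $S$. The first coordinates of the elements of $S$ are open in $M$, again by \cref{lem:alex-open}, and they join to $1$ in $M$. In particular $\neg u_0\leq\bigvee\{u\mid (u,b)\in S\}$, a join of open elements of $M$. Since $\neg u_0$ is compact, there are finitely many $(u_1,b_1),\dots,(u_n,b_n)\in S$ with $\neg u_0\leq u_1\vee\dots\vee u_n$.

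Set $T=\{(u_0,b_0),\dots,(u_n,b_n)\}$. In the first coordinate, $\bigvee T$ is at least $u_0\vee\neg u_0=1$. In the second coordinate it is at least $b_0=1$. Hence $\bigvee T=(1,1)$, and the top element of $\A(M)$ is compact.

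There is no real obstacle. The only points to check are that joins in $\A(M)$ are componentwise, since it is a product Boolean algebra, and that the compactness of $\neg u_0$ is compactness with respect to covers by open elements of $M$, which is exactly what the first coordinates provide.
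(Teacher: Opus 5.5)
Your proposal is correct and follows essentially the same argument as the paper. Both proofs take a member of the cover whose second coordinate is $1$, use \cref{lem:alex-open} to get that $\neg u_0$ is compact, and cover $\neg u_0$ by finitely many of the first coordinates, which are open in $M$.
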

\begin{proof}
    Suppose $\bigvee (u_i,v_i)=(1,1)$ for some $(u_i,v_i)\in \O(\A(M))$. Then $\bigvee u_i=1$ and ${\bigvee v_i=1}$. The latter implies that $v_i=1$ for some $i$. Therefore, $\neg u_i$ is compact by \cref{lem:alex-open}. Since $\neg u_i \leq \bigvee u_i$, there exist $(u_1,v_1),\dots,(u_n,v_n)$ among the given open elements such that $\neg u_i \leq u_1\vee\cdots\vee u_n$.
    Hence, $(u_i,v_i)\vee (u_1,v_1)\vee\cdots\vee(u_n,v_n)=(1,1)$,
    so $\A(M)$ is compact.
\end{proof}

Let $\pi : \A(M) \to M$ be the projection defined by $\pi(a,b)=a$ for $(a,b) \in \A(M)$. 

\begin{lemma}\label{piextrimalepi}
    $\pi: \A(M) \to M$ is an MT-embedding.
\end{lemma}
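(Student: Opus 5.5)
We must show three things: $\pi$ is an MT-morphism, $\pi$ is onto, and $\O(\pi)$ is onto. The plan is to handle them in that order, directly from the definition of $\A(M)$ and \cref{lem:alex-open}.

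\textbf{Step 1: $\pi$ is an MT-morphism.} First, $\pi$ is a complete boolean homomorphism. This holds because $\A(M)=M\times 2$ carries the componentwise boolean structure, and projection onto a factor of a product of complete boolean algebras preserves arbitrary joins, arbitrary meets and complements.

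Next, we need $\pi(\square(a,b))\leq\square\pi(a,b)$. In both cases of the definition of $\square$ on $\A(M)$, the first coordinate of $\square(a,b)$ is $\square a$. Hence $\pi(\square(a,b))=\square a=\square\pi(a,b)$, so the inequality holds, in fact with equality.

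\textbf{Step 2: $\pi$ is onto.} For any $a\in M$ we have $\pi(a,0)=a$.

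\textbf{Step 3: $\O(\pi)$ is onto.} Let $u\in\O(M)$. By \cref{lem:alex-open}, the element $(u,0)$ is open in $\A(M)$: its first coordinate is open, and the condition on the second coordinate is vacuous because $b=0$. Then $\O(\pi)(u,0)=u$.

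Each step is a one-line verification, so no step is a real obstacle. The only point needing care is Step 1, where one must note that the first coordinate of $\square(a,b)$ is $\square a$ in both cases of the definition.
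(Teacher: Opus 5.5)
Your proposal is correct and matches the paper's proof essentially step for step. The paper also notes that $\pi$ is an onto complete boolean homomorphism with $\pi(\square(a,b))=\square a=\square\pi(a,b)$, and obtains surjectivity of $\O(\pi)$ from the openness of $(u,0)$ via \cref{lem:alex-open}.
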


\begin{proof}
    It is clearly an onto complete boolean homomorphism. Moreover,
    \[
        \pi(\square(a,b))=\square a=\square\pi(a,b),
    \]
    so it is an MT-morphism. By \cref{lem:alex-open},  $u \in \O(M)$ implies that ${(u,0) \in \O(\A(M))}$. Thus, $\O(\pi)$ is onto, and hence $\pi$ is an MT-embedding.
\end{proof}

We are now ready to show that the Alexandroff extension yields a compactification for each non-compact MT-algebra.

\begin{theorem} \label{Alexandroff compactification}
    If $M$ is not compact, then $(\A(M),\pi)$ is a compactification of $M$.
\end{theorem}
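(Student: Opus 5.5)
By \cref{def:compactification}, three things must hold: $\A(M)$ is compact, $\pi$ is an MT-embedding, and $\pi$ is dense. The first two are \cref{alex-compact} and \cref{piextrimalepi}, and neither uses non-compactness of $M$. So the proof reduces to showing that $\pi$ is dense, and this is the only place the hypothesis that $M$ is not compact enters.

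To prove density, take an open element $(u,v)\in\O(\A(M))$ with $\pi(u,v)=u=0$; we must show $v=0$. Suppose instead that $v=1$. By \cref{lem:alex-open}, openness of $(u,1)$ with $u=0$ forces $\neg u=\neg 0=1$ to be compact. Thus the top element of $M$ is compact, that is, $M$ is compact, contradicting the hypothesis. Hence $v=0$, so $(u,v)=(0,0)$ and $\pi$ is dense.

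Combining the three properties, $(\A(M),\pi)$ is a compactification of $M$. The only substantive step is the density argument, and it is immediate once \cref{lem:alex-open} is applied to $(0,1)$. The one point to check is that the two notions of compactness agree: compactness of the element $1$ of $M$ is exactly compactness of $M$ by the paper's definition, so nothing further is needed.
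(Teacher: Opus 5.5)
Your proposal is correct and follows essentially the same route as the paper: compactness and the MT-embedding property come from \cref{alex-compact} and \cref{piextrimalepi}. Density follows by applying \cref{lem:alex-open} to an open $(0,v)$, where $v=1$ would force the top element $1=\neg 0$ of $M$ to be compact.
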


\begin{proof}
    By \cref{piextrimalepi}, $\pi$ is an MT-embedding. By \cref{alex-compact}, $\A(M)$ is compact. To see that $\pi$ is dense, let $(u,v) \in \O(\A(M))$ and suppose that $\pi(u,v)=0$. Then $u=0$. Since $M$ is not compact, $\neg u=1$ is not compact, and hence \cref{lem:alex-open} implies that $v=0$. Thus, $(u,v)=(0,0)$, and so $\pi$ is dense.
\end{proof}

\begin{corollary}
    Every MT-algebra admits a compactification.
\end{corollary}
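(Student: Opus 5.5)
The corollary splits into two cases according to whether $M$ is compact.

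If $M$ is not compact, \cref{Alexandroff compactification} applies directly: $(\A(M),\pi)$ is a compactification of $M$.

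If $M$ is compact, I would take the pair $(M,\mathrm{id}_M)$ and check \cref{def:compactification} directly. The identity is a complete boolean homomorphism with $\mathrm{id}(\square a)=\square\,\mathrm{id}(a)$, so it is an MT-morphism. It is onto, and its restriction $\O(\mathrm{id}_M)$ is the identity on $\O(M)$, which is also onto, so $\mathrm{id}_M$ is an MT-embedding. It is dense, since $\mathrm{id}(u)=0$ forces $u=0$. Since $M$ is compact by assumption, $(M,\mathrm{id}_M)$ is a compactification of $M$.

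Neither step is a real obstacle. The only point needing care is the compact case, because there \cref{Alexandroff compactification} fails: $\neg 0=1$ is compact, so $(0,1)$ is a nonzero open element of $\A(M)$ sent to $0$ by $\pi$, and $\pi$ is not dense. The trivial compactification by the identity handles this case.
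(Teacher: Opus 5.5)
Your proof is correct and follows the same approach as the paper: the identity handles the compact case, and the Alexandroff extension with the projection $\pi$ handles the non-compact case. Your remark that $\pi$ is not dense when $M$ is compact (because $(0,1)$ is then a nonzero open element sent to $0$) is also correct, and it explains why the case split is needed.
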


\begin{proof}
    If $M$ is compact, then $M$ paired with the identity morphism is a compactification of $M$. If $M$ is not compact, then \cref{Alexandroff compactification} applies.
\end{proof}

Using \cref{prop: O iff compactifications} and \cref{Alexandroff compactification}, the Alexandroff extension of MT-algebras yields compactifications of frames. We thus obtain a one-point compactification, in the sense of \cite{Ban90b}, for each non-compact frame.

\begin{corollary}
    Let $L$ be a non-compact frame. Then there exists a compactification $(K,h)$ of $L$ such that $K$ has a maximal proper element $a$ and the restriction $h:\downset a\to L$
    is a frame isomorphism.
\end{corollary}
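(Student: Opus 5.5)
Take $M$ to be the Funayama envelope $\F L$. Then $\O(M)\cong L$, and we may identify the two. Let $(\A(M),\pi)$ be the Alexandroff extension. Since $\O(M)\cong L$ and $L$ is not compact, $M$ is not compact (an MT-algebra is compact iff its frame of opens is). Hence \cref{Alexandroff compactification} shows that $(\A(M),\pi)$ is a compactification of $M$. Because $\pi$ is onto, \cref{prop: O iff compactifications} then shows that $(K,h):=(\O(\A(M)),\O(\pi))$ is a compactification of $\O(M)\cong L$. Composing with this isomorphism gives a compactification of $L$.

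For the maximal proper element, take $a=(1,0)$. By \cref{lem:alex-open}, $(1,0)$ is open, since $1\in\O(M)$ and the second coordinate is $0$. It is proper, since $(1,0)\neq(1,1)$. It is maximal because any open $(u,v)$ strictly above $(1,0)$ must have $u=1$ and $v=1$, so it is the top. The downset of $a$ in $K$ consists of the open elements $(u,v)$ with $v=0$. By \cref{lem:alex-open}, these are exactly the pairs $(u,0)$ with $u\in\O(M)$, and there is no compactness condition when $v=0$.

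The restriction of $h=\O(\pi)$ to $\downset a$ is the map $(u,0)\mapsto u$. This is a bijection onto $\O(M)$. It preserves and reflects order coordinatewise, so it is an order isomorphism between the frames $\downset a$ and $\O(M)$, and hence a frame isomorphism. Here $\downset a$ is a frame with top $a$, and joins and finite meets are computed as in $K$. Composing with $\O(M)\cong L$ gives the claimed isomorphism $h\colon\downset a\to L$.

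The only delicate points are checking that non-compactness transfers from $L$ to $M$, and that $\downset a$ is read as a frame with top $a$. Both are immediate from the cited facts, so I expect no real obstacle.
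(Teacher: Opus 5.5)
Your proposal is correct and follows the paper's proof essentially step for step. You take $M=\F L$, apply \cref{Alexandroff compactification} and then \cref{prop: O iff compactifications} (using that $\pi$ is onto) to get the compactification $(\O(\A(M)),\O(\pi))$, and use \cref{lem:alex-open} to identify $\downset(1,0)$ with $\{(u,0)\mid u\in\O(M)\}$, on which $\O(\pi)$ is $(u,0)\mapsto u$. Your extra checks (that $(1,0)$ is proper and maximal, and that an order isomorphism between these frames is a frame isomorphism) only spell out what the paper states directly.
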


\begin{proof}
    Let $L$ be a non-compact frame and let $M=\F L$ be its Funayama envelope. Identifying $\O(M)$ with $L$, we have that $M$ is not compact. Hence, $(\A(M),\pi)$ is a compactification of $M$ by \cref{Alexandroff compactification}. Consequently,
    $(\O(\A(M)),\O(\pi))$
    is a compactification of $L$ by \cref{prop: O iff compactifications}. Put $K=\O(\A(M))$ and $h=\O(\pi)$.

    The element $(1,0)$ is a maximal proper element of $K$. Moreover, by \cref{lem:alex-open},
    $\downset(1,0)=\{(u,0)\mid u\in \O(M)\}$,
    and the restriction of $h$ to this downset is given by
    $h(u,0)=u$.
    Thus, $h:\downset(1,0)\to L$ is a frame isomorphism.
\end{proof}

\begin{remark}
    The preceding corollary is an analogue of Banaschewski's frame-theoretic treatment of the Alexandroff one-point compactification for our more general notion of compactification. In \cite[pp.~113--115]{Ban90b}, Banaschewski proves that the regular continuous frames are, up to isomorphism, exactly the frames of the form $\downset a$, where $a$ is a maximal element of a compact regular frame. Our construction gives a compact frame $K$ of this form for every non-compact frame $L$, but does not in general produce a compact regular frame.
\end{remark}

The Alexandroff one-point extension of a $T_1$-space is a compact $T_1$-space. This does not carry over to MT-algebras. Indeed, even the Alexandroff extension of a discrete MT-algebra need not be $T_1$.

\begin{example}[A discrete algebra whose Alexandroff extension is not $T_1$]
    Let $M$ be a discrete MT-algebra whose boolean reduct is atomless. Then the only compact element of $M$ is $0$, so by \cref{lem:alex-closed}, the only $(a,b) \in \C(\A(M))$ with $b=0$ is $(0,0)$. Therefore, there is no non-zero closed element below $(1,0)$. Hence, the closed elements do not join-generate $\A(M)$, and so $\A(M)$ is not $T_1$.
\end{example}

The situation is quite restrictive. Assuming choice, we now show that $\A(M)$ is $T_1$ iff $M$ is $T_1$ and spatial. In this case, $\A(M)$ corresponds to the classical Alexandroff one-point extension of the associated $T_1$-space. For this we first require the following lemma.

\begin{lemma} \label{lem: onto spatial}
    If $h:M\to N$ is an onto MT-morphism and $M$ is spatial, then $N$ is spatial.
\end{lemma}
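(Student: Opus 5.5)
Spatiality only concerns the boolean reduct, so the interior operator plays no role here. It suffices to show that the image of an atomic complete boolean algebra under an onto complete boolean homomorphism is atomic. Let $h\colon M\to N$ be onto and let $0\neq a\in N$. We need an atom of $N$ below $a$.

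First, since $h$ is onto, we pick $b\in M$ with $h(b)=a$. Because $M$ is atomic, $b=\bigvee\{x\in\at(M)\mid x\leq b\}$. As $h$ preserves arbitrary joins,
\[
a=h(b)=\bigvee\{h(x)\mid x\in\at(M),\ x\leq b\}.
\]
Since $a\neq 0$, some atom $x\leq b$ satisfies $h(x)\neq 0$. Also $h(x)\leq h(b)=a$.

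Second, we show that $h(x)$ is an atom of $N$. Let $0\neq c\leq h(x)$. Surjectivity gives $d\in M$ with $h(d)=c$. Then
\[
h(d\wedge x)=c\wedge h(x)=c\neq 0,
\]
so $d\wedge x\neq 0$. Since $x$ is an atom, $d\wedge x=x$, and therefore $c=h(d\wedge x)=h(x)$. Hence $h(x)$ is an atom lying below $a$, and $N$ is atomic, that is, spatial.

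There is no real obstacle. The only point needing care is that preservation of arbitrary joins is used to find an atom not sent to $0$, and surjectivity is used again in the atomicity step to pull $c$ back.
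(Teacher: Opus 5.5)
Your proposal is correct and follows essentially the same argument as the paper. You pull back a nonzero element, use atomicity of $M$ together with preservation of arbitrary joins to find an atom $x$ with $h(x)\neq 0$, and then use surjectivity again to show that $h(x)$ is an atom of $N$.
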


\begin{proof}
    Let $n\in N$ be non-zero. Since $h$ is onto, there exists $m\in M$ such that $h(m)=n$. Since $M$ is spatial,
    \[
        m=\bigvee \eta_M(m) = \bigvee\{x\in\at(M)\mid x\leq m\}.
    \]
    Therefore,   $n=h(m)=\bigvee\ h[\eta_M(m)]$,
    so there exists an atom $x\in M$ such that $x\leq m$ and $h(x)\neq 0$. We claim that $h(x)$ is an atom of $N$. Let $0\neq y\leq h(x)$. Since $h$ is onto, there exists $b\in M$ such that $h(b)=y$. Then
    \[
        0\neq h(b)=h(b)\wedge h(x)=h(b\wedge x),
    \]
    so $b\wedge x\neq 0$. Since $x$ is an atom, $b\wedge x=x$, and hence
    \[
        y=h(b)=h(b\wedge x)=h(x).
    \]
    Thus, $h(x)$ is an atom of $N$. Since $h(x)\leq h(m)=n$, every non-zero element of $N$ lies above an atom, and hence $N$ is spatial.
\end{proof}
\begin{remark}
    \Cref{lem: onto spatial} is the well-known fact that complete homomorphic images of complete atomic boolean algebras are atomic, applied to the boolean reducts of MT-algebras.
\end{remark}

\begin{*theorem}\label{separationmakesAlexandroffspatial}
Let $M$ be an MT-algebra. The following are equivalent:
\begin{enumerate}[cref=theorem]
    \item $\A(M)$ is $T_1$; \label{alex-spatial-1}
    \item $\A(M)$ is $T_1$ and spatial; \label{alex-spatial-2}
    \item $M$ is $T_1$ and spatial. \label{alex-spatial-3}
\end{enumerate}
\end{*theorem}

\begin{proof}
    \tref{alex-spatial-1}$\Rightarrow$\tref{alex-spatial-2} Suppose $\A(M)$ is $T_1$. Since $\A(M)$ is compact by \cref{alex-compact}, N\"obeling's Spatiality Theorem applies, and hence $\A(M)$ is spatial.

    \tref{alex-spatial-2}$\Rightarrow$\tref{alex-spatial-3} Suppose $\A(M)$ is $T_1$ and spatial. Since $\pi:\A(M)\to M$ is onto by \cref{piextrimalepi}, it follows from \cref{lem: onto spatial} that $M$ is spatial. Since $\pi$ is a surjective MT-morphism, $M$ is $T_1$ by \cref{prop:T1 under onto MTmorphism}. 
    
    \tref{alex-spatial-3}$\Rightarrow$\tref{alex-spatial-1} Suppose $M$ is $T_1$ and spatial, and let $(a,b)\in\A(M)$ be non-zero. If $b=1$, then $(0,1)\leq(a,b)$, 
    and $(0,1)\in\C(\A(M))$ by \cref{lem:alex-closed}. Suppose $b=0$. Then $a\neq 0$, so there exists an atom $x\in M$ such that $x\leq a$. Since $M$ is $T_1$, there exists a non-zero closed element below $x$, and hence $x$ itself is closed. Moreover, $x$ is compact because it is an atom. Thus, $(x,0)\in\C(\A(M))$ by \cref{lem:alex-closed}, and
    $(x,0)\leq(a,b)$.
    Therefore, every non-zero element of $\A(M)$ lies above a non-zero closed element, and hence $\A(M)$ is $T_1$.
\end{proof}

This shows that it is quite difficult for $\A(M)$ to satisfy separation axioms. As soon as we require $T_1$, we are limited to the spatial case. We return to the Hausdorff case shortly. In fact, as the reader might have observed, there was nothing specific to $\A(M)$ in the part of the proof of \cref{separationmakesAlexandroffspatial} showing that $M$ is spatial. We thus make the following observation.

\begin{*theorem} \label{cor:no-compactification}
    A non-spatial MT-algebra does not admit a $T_1$-compactification.
\end{*theorem}

\begin{proof}
    Suppose $(N,h)$ is a $T_1$-compactification of $M$. Then $N$ is compact and $T_1$, hence spatial by N\"obeling's Spatiality Theorem. Since $h$ is onto, \cref{lem: onto spatial} implies that $M$ is spatial.
\end{proof}

\begin{*example}\label{ex:frame-compactification-does-not-lift}
    Let $L$ be a non-spatial subfit frame, for example an atomless complete boolean algebra, and let $(\w L,\nu)$ be its Wallman compactification. Then $\F L$ is a non-spatial $T_1$-algebra, while $\F(\w L)$ is a compact $T_1$-algebra. We claim that $\nu$ is not induced by any MT-morphism $h:\F(\w L)\to \F L$.
    Indeed, if $\O(h)$ corresponded to $\nu$ under the isomorphisms $\O(\F(\w L))\cong \w L$ and $\O(\F L)\cong L$, then \cref{thm:O-compactifications-T1} would imply that $(\F(\w L),h)$ is a $T_1$-compactification of the non-spatial MT-algebra $\F L$. This contradicts \cref{cor:no-compactification}.
\end{*example}

Thus, assuming choice, $T_1$-compactifications in the sense of
\cref{def:compactification} are restricted to spatial MT-algebras. In
particular, one cannot expect Wallman or Stone--\v{C}ech compactifications
of arbitrary non-spatial MT-algebras in this sense. The spatial case is
different. In the next section we show, that every
spatial $T_1$ MT-algebra admits a Wallman compactification, and that every
spatial completely regular MT-algebra admits a Stone--\v{C}ech
compactification.

To end this section,
we generalize the well-known fact that the Alexandroff extension of a non-compact locally compact Hausdorff space is a Hausdorff compactification (see, e.g., \cite[p.~150]{Wil70}). We prove the MT-algebraic version without assuming choice, and hence without imposing spatiality. For this, we require the following result. 

\begin{lemma} \label{compact-eqv-sep}
    Let $M$ be a compact $T_1$-algebra. The following are equivalent.
    \begin{enumerate}[cref=lemma]
        \item $M$ is Hausdorff. \label{compact-eqv-T2}
        \item $M$ is regular. \label{compact-eqv-reg}
        \item $M$ is normal. \label{compact-eqv-norm}
    \end{enumerate}
\end{lemma}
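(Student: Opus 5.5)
I will prove the cycle \tref{compact-eqv-T2}$\Rightarrow$\tref{compact-eqv-norm}$\Rightarrow$\tref{compact-eqv-reg}$\Rightarrow$\tref{compact-eqv-T2}. The argument works directly with closed and open elements, and compactness is used only through \cref{closed-below-compact} and \cref{compactness-meets}. No choice is needed, since every step is a finite combinatorial argument inside $M$.

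\emph{Regular $\Rightarrow$ Hausdorff.} Let $a\neq 0$. Since $M$ is $T_1$, there is a closed $0\neq c\leq a$, and it suffices to show $c=\bigwedge\{\diamond u\mid c\leq u\in\O(M)\}$. Set $w=\neg c$, which is open. By regularity, $w=\bigvee\{v\in\O(M)\mid v\prec w\}$. For each $v\prec w$ we have $\diamond v\leq w$, so $u_v:=\neg\diamond v$ is open and satisfies $c\leq u_v$. Moreover $u_v\wedge v=0$, hence $\diamond u_v\leq\neg v$. Taking meets over all such $v$ gives
\[
\bigwedge_v\diamond u_v\leq \neg\bigvee_v v=\neg w=c.
\]
Therefore $c$ has the required form, and $M$ is Hausdorff.

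\emph{Normal $\Rightarrow$ regular.} Let $u$ be open and let $c=\neg u$. For each closed $d\leq u$, the closed elements $c$ and $d$ are disjoint. Normality gives disjoint opens $p\geq c$ and $q\geq d$. Then $\diamond q\leq\neg p\leq u$, so $q\prec u$ and $d\leq q$. Since $M$ is $T_1$, $u$ is the join of the closed elements below it, and each of these lies below some open $q\prec u$. Hence $u=\bigvee\{q\in\O(M)\mid q\prec u\}$. Compactness is not needed in this step.

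\emph{Hausdorff $\Rightarrow$ normal.} This is the main obstacle. In a compact Hausdorff space one separates a point from a closed set and then a closed set from a closed set. Here there are no points, so I would replace points by the join-generating ``Hausdorff elements'' $e=\bigwedge\{\diamond u\mid e\leq u\in\O(M)\}$. Let $c,d$ be disjoint closed elements.

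First, I separate such an $e$ from $d$ whenever $e\wedge d=0$. The family $\{\diamond u\mid e\leq u\}\cup\{d\}$ consists of closed elements and has meet $e\wedge d=0$. By compactness and \cref{compactness-meets} with $a=1$, there are finitely many $u_1,\dots,u_n\geq e$ with $\diamond u_1\wedge\dots\wedge\diamond u_n\wedge d=0$. Put $u=\bigwedge u_i$, so $\diamond u\wedge d=0$. Then $u$ and $\neg\diamond u$ are disjoint opens containing $e$ and $d$ respectively.

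Next, I need to cover $c$ by such elements $e$. The worry is that Hausdorff elements below $c$ join to $c$ only if the generating family can be chosen below $c$. Since every $a$ is a join of Hausdorff elements, $c$ is a join of Hausdorff elements $e\leq c$, and each of these is automatically disjoint from $d$. For each such $e$ pick disjoint opens $u_e\geq e$ and $v_e\geq d$ as above. Then $c\leq\bigvee_e u_e$. Now $c$ is compact by \cref{closed-below-compact}, so finitely many $u_{e_1},\dots,u_{e_k}$ cover $c$. Setting $U=\bigvee_j u_{e_j}$ and $V=\bigwedge_j v_{e_j}$ gives disjoint opens with $c\leq U$ and $d\leq V$. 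Hence $M$ is normal, which closes the cycle.
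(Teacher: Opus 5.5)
Your proof is correct, but it takes a different route from the paper.

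\textbf{The paper's route.} It cites \cite{BR23} for normal $\Rightarrow$ regular $\Rightarrow$ Hausdorff and \cite{BR25} for Hausdorff $\Rightarrow$ regular. It proves only regular $\Rightarrow$ normal, and does so by passing to the frame of opens. There, a compact regular frame is normal, and normality transfers back to a $T_1$-algebra by \cite[Thm.~8.15]{BR23}.

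\textbf{Your route.} You stay inside $M$ throughout.
\begin{itemize}
\item Your arguments for regular $\Rightarrow$ Hausdorff and normal $\Rightarrow$ regular are direct. They show that compactness is needed for only one implication.
\item For Hausdorff $\Rightarrow$ normal you run the classical point-set proof, with Hausdorff elements standing in for points. This avoids both the external Hausdorff $\Rightarrow$ regular result and the detour through frames.
\end{itemize}
The paper's proof is shorter given the literature. Yours is self-contained and shows exactly where compactness enters.

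\textbf{A correction.} Your claim that no choice is used is not accurate as written. The step ``for each such $e$ pick disjoint opens $u_e\geq e$ and $v_e\geq d$'' chooses a pair from each set in a possibly infinite family of nonempty sets. That is an application of the Axiom of Choice. It matters here because the lemma is not starred and is used in results the paper advertises as choice-free.

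The repair is immediate. Let $W$ be the set of all open $u$ with $\diamond u\wedge d=0$.
\begin{itemize}
\item Your first step shows that every Hausdorff element $e\leq c$ lies below some member of $W$, so $c\leq\bigvee W$.
\item Since $c$ is compact, there are $u_1,\dots,u_k\in W$ with $c\leq u_1\vee\dots\vee u_k$. Only finitely many selections are made here.
\item Put $U=u_1\vee\dots\vee u_k$ and $V=\neg\diamond U$. These are disjoint opens with $c\leq U$. Also $d\leq V$, because $\diamond U=\diamond u_1\vee\dots\vee\diamond u_k$ is disjoint from $d$.
\end{itemize}
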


\begin{proof}
    The implications \tref{compact-eqv-norm}$\Rightarrow$\tref{compact-eqv-reg}$\Rightarrow$\tref{compact-eqv-T2} are proven in \cite{BR23}, and the implication \tref{compact-eqv-T2}$\Rightarrow$\tref{compact-eqv-reg} in \cite{BR25}, so it suffices to show that \tref{compact-eqv-reg}$\Rightarrow$\tref{compact-eqv-norm}. Let $M$ be regular. Then $\O(M)$ is regular (see \cite[Sec.~7]{BR23}). Similarly, $\O(M)$ is compact because $M$ is. Thus, $\O(M)$ is a compact regular frame, and hence normal (see, e.g., \cite[Prop.~VII-2.2]{PP12}). Consequently, $M$ is normal by \cite[Thm.~8.15]{BR23}.
\end{proof}

\begin{remark}
The equivalence in \cref{compact-eqv-sep} does not extend to complete regularity 
without assuming choice.
The implication from normality to complete regularity proved in \cite[Thm.~8.17]{BR23} relies on the MT-algebraic version of Urysohn's Lemma, and hence on Countable Dependent Choice (see also \cite[p.~131--132]{PP21}).
\end{remark}

Recall that an MT-algebra is \emph{locally compact} provided
\[
    u=\bigvee\{v\in\O(M)\mid v\leq k\leq u \text{ for some compact } k\}
\]
for every $u\in\O(M)$.
\begin{theorem}
    $(\A(M),\pi)$ is a Hausdorff compactification iff $M$ is non-compact, locally compact, and Hausdorff.
\end{theorem}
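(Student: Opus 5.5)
The plan splits into the two implications. Throughout, I use the description of $\diamond$ on $\A(M)$ from the proof of \cref{lem:alex-closed}, namely $\diamond(a,b)=(\diamond a,b)$ if $\diamond a$ is compact and $(\diamond a,1)$ otherwise, together with \cref{lem:alex-open}.

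\emph{($\Rightarrow$).} Suppose $(\A(M),\pi)$ is a Hausdorff compactification.

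\emph{Non-compactness.} If $M$ were compact, then $\neg 0=1$ would be compact. By \cref{lem:alex-open}, $(0,1)$ would then be a non-zero open element with $\pi(0,1)=0$, contradicting density.

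\emph{Hausdorffness.} $\pi$ is an onto MT-morphism, so $M$ is Hausdorff by \cref{prop:T2 under onto MTmorphism}.

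\emph{Local compactness.} I use that Hausdorff algebras are $T_1$ (from \cite{BR23}). Then $\A(M)$ is a compact $T_1$ Hausdorff algebra, hence regular by \cref{compact-eqv-sep}. Fix $u\in\O(M)$; then $(u,0)$ is open. Regularity writes $(u,0)$ as the join of open $(v,\beta)$ with $\diamond(v,\beta)\le(u,0)$. The second coordinate of $\diamond(v,\beta)$ must be $0$, which forces $\beta=0$ and $\diamond v$ compact, and also $\diamond v\le u$. Hence
\[
u=\bigvee\{v\in\O(M)\mid v\le \diamond v\le u,\ \diamond v \text{ compact}\},
\]
which is local compactness with $k=\diamond v$.

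\emph{($\Leftarrow$).} By \cref{Alexandroff compactification}, $(\A(M),\pi)$ is a compactification, so only Hausdorffness of $\A(M)$ remains. I would first prove a key auxiliary claim: if $M$ is Hausdorff and $v\le k$ with $v$ open and $k$ compact, then $\diamond v$ is compact. Combined with local compactness, this upgrades the defining join to
\[
u=\bigvee\{v\in\O(M)\mid v\le u,\ \diamond v\text{ compact}\}.
\]
This claim is the step I expect to be the main obstacle; it is the MT-analogue of ``compact subsets of Hausdorff spaces are closed.'' I would try to show $\diamond v\le\diamond k$ is compact via \cref{compactness-meets}. Suppose $S\subseteq\C(M)$ with $\diamond v\wedge\bigwedge S=0$. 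Cover $k$ by the Hausdorff join-generators and use the identity $c=\bigwedge\{\diamond w\mid c\le w\in\O(M)\}$ to separate, then apply compactness of $k$ to finitely many of the resulting opens. If this fails in general, I would instead derive from Hausdorffness and local compactness directly that every open is a join of opens with compact closure, which is all that is used below.

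\emph{Join-generating elements of $\A(M)$.} Let $(a,b)\neq 0$.

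\emph{Case $b=0$.} Then $a\neq0$, so $a$ meets some open $v$ with $\diamond v$ compact. Choose a non-zero Hausdorff generator $c\le a\wedge v$. For open $w\ge c$, the element $(w\wedge v,0)$ is open and above $(c,0)$. Its closure is $(\diamond(w\wedge v),0)$, since $\diamond(w\wedge v)\le\diamond v$ is compact by \cref{closed-below-compact}. Meeting over all such $w$ gives
\[
\bigwedge_w\diamond(w\wedge v)\le c\wedge\diamond v=c,
\]
so $(c,0)$ equals the meet of closures of its open neighbourhoods, and $(c,0)\le(a,b)$.

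\emph{Case $b=1$.} Here $(0,1)\le(a,b)$, and I show $(0,1)$ is a generator. For every open $v$ with $\diamond v$ compact, the element $(\neg\diamond v,1)$ is open by \cref{lem:alex-open}, and its closure is $(\diamond\neg\diamond v,1)=(\neg\square\diamond v,1)$. The meet over all such $v$ has first coordinate
\[
\neg\bigvee\square\diamond v\le\neg\bigvee v=\neg 1=0,
\]
using local compactness in the upgraded form. Hence $(0,1)$ is of the required form, and $\A(M)$ is Hausdorff.
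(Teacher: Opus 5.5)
Your argument follows the paper's proof essentially step for step. For ($\Rightarrow$) you use the same density, onto-image and regularity arguments. For ($\Leftarrow$) you use the same generators $(0,1)$ and $(c,0)$ with the same closure computations; your choice of $(w\wedge v,0)$ in the case $b=0$ is even a little cleaner than the paper's. The one genuine gap is the auxiliary claim you flag as the main obstacle, and it carries the entire Hausdorff content of ($\Leftarrow$): both of your cases rely on the upgraded form of local compactness, which you do not establish. Your fallback (deriving directly that every open is a join of opens with compact closure) is a restatement of the same claim, not an alternative route.

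The sketch also has the wrong shape. Testing compactness of $\diamond v$ against an arbitrary closed family $S$ gives you no element outside $k$ to separate from. Covering $k$ by its own Hausdorff generators is not what is needed either. The paper imports the stronger fact that compact elements of a Hausdorff MT-algebra are closed (\cite[Lem.~6.4]{BR25}), and this has a short proof from the tools at hand.

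Let $k$ be compact and suppose $\diamond k\wedge\neg k\neq 0$. Choose a non-zero Hausdorff element $a\leq\diamond k\wedge\neg k$. Then $k\wedge\bigwedge\{\diamond u\mid a\leq u\in\O(M)\}=k\wedge a=0$. By \cref{compactness-meets}, finitely many such $u$ suffice, and their meet $u$ is an open element with $a\leq u$ and $k\wedge\diamond u=0$. Hence $k\leq\neg\diamond u\leq\neg u$, and since $\neg u$ is closed, $\diamond k\leq\neg u$. But then $a\leq\diamond k\wedge u=0$, a contradiction. So $k$ is closed, and if $v\leq k$ with $v$ open, then $\diamond v\leq k$ is compact by \cref{closed-below-compact}.

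The open cover of $k$ that compactness acts on comes from the Hausdorff identity of a generator lying outside $k$, namely $k\leq\neg a=\bigvee\{\square\neg u\mid a\leq u\in\O(M)\}$. It does not come from a decomposition of $k$ itself. With this lemma inserted, the rest of your proof is correct.
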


\begin{proof}
    ($\Rightarrow$) Suppose $(\A(M),\pi)$ is a Hausdorff compactification of $M$. Since $\pi$ is dense, $M$ is not compact. Indeed, if $M$ were compact, then $\square(0,1)=(0,1)\neq(0,0)$,
    while $\pi(\square(0,1))=0$, contradicting density. $M$ is Hausdorff follows from \cref{prop:T2 under onto MTmorphism}.  It remains to show that $M$ is locally compact. Let $u\in\O(M)$. Since $\A(M)$ is compact Hausdorff, it is regular by \cref{compact-eqv-sep}. Hence,
    \[
        (u,0)=\bigvee\{(v,b)\in\O(\A(M))\mid (v,b)\prec(u,0)\}.
    \]
    If $(v,b)\prec(u,0)$, then $\diamond(v,b)\leq(u,0)$.
    By \cref{lem:alex-closed}, this implies that $b=0$, $\diamond v$ is compact, and $\diamond v\leq u$. Therefore,
    \[
        u=\bigvee\{v\in\O(M)\mid v\leq\diamond v\leq u
        \text{ and } \diamond v \text{ is compact}\}.
    \]
    Thus, $M$ is locally compact.

($\Leftarrow$) Suppose $M$ is non-compact, locally compact, and Hausdorff. By \cref{Alexandroff compactification}, $(\A(M),\pi)$ is a compactification of $M$. It remains to show that $\A(M)$ is Hausdorff. Let $(a,b) \in \A(M)$ be non-zero.
    Suppose first that $b=1$. Then $(0,1)\leq(a,b)$. Put
    \[
        c=\bigwedge\{\diamond u \mid u\in\O(M) \text{ and } \neg u \text{ is compact}\}.
    \]
    By \cref{lem:alex-open,lem:alex-closed},
    \[
        \bigwedge\{\diamond(u,v) \mid (u,v)\in\O(\A(M)) \text{ and } (0,1)\leq(u,v)\}
        =(c,1).
    \]
    We show that $c=0$. Suppose otherwise. By local compactness, there exist $w\in\O(M)$ and a compact element $k\in M$ such that
    $c\wedge w\neq0$ and $w \leq k$.
    Since $M$ is Hausdorff, $k$ is closed by \cite[Lem.~6.4]{BR25}. Thus, $\neg k\in\O(M)$ and $k=\neg(\neg k)$ is compact. Therefore, $c\leq\diamond\neg k$. On the other hand, since $w$ is open and $w\leq k$, we have $w\leq\square k=\neg\diamond\neg k$,
    contradicting $c\wedge w\neq0$. Hence, $c=0$, and so $(0,1)$ is a non-zero Hausdorff element below $(a,b)$.

    Suppose now that $b=0$. Then $a\neq0$. Since $M$ is Hausdorff, there exists a non-zero Hausdorff element $a'\leq a$, that is,
    \[
        a'=\bigwedge\{\diamond u \mid a'\leq u\in\O(M)\}.
    \]
    By local compactness, there exist $u\in\O(M)$ and a compact element $k\in M$ such that $a'\wedge u\neq0$ and $u\leq k$.
    Applying Hausdorffness again, we may replace $a'$ by a non-zero Hausdorff element below $a'\wedge u$. Thus, we may assume that
    \[
        0\neq a'\leq u\leq k\leq a.
    \]
    Since $M$ is Hausdorff, $k$ is closed by \cite[Lem.~6.4]{BR25}. Hence, $\diamond u\leq k$. Since $\diamond u$ is closed and lies below the compact element $k$, it is compact. Therefore, $\diamond(u,0)=(\diamond u,0)$.
    Since $(u,0)$ is an open element above $(a',0)$, and since every $(v,0)$ with $a'\leq v\in\O(M)$ is an open element above $(a',0)$, we obtain
    \[
        (a',0)
        =
        \bigwedge\{\diamond(v,d) \mid (v,d)\in\O(\A(M)) \text{ and } (a',0)\leq(v,d)\}.
    \]
    Thus, $(a',0)$ is a non-zero Hausdorff element below $(a,0)$. Hence, $\A(M)$ is Hausdorff.
\end{proof}
\begin{remark}
The previous theorem does not contradict \cref{cor:no-compactification}. Indeed, in the presence of choice, locally compact Hausdorff MT-algebras are spatial \cite[Lem.~6.3]{BR25}. 
\end{remark}

\section{Wallman-type compactifications of spatial MT-algebras}
\label{sec:wallman}

In the previous section, we saw that a non-spatial MT-algebra admits no $T_1$-compactifi\-cation. 
In this section, we construct a choice-free $T_1$-compactification for each spatial $T_1$-algebra.
Given a suitable basis of closed elements of an MT-algebra $M$, we first construct an associated compact $T_1$ MT-algebra. If $M$ is spatial and $T_1$, then this construction is a compactification of $M$. Taking the basis to be all closed elements yields the Wallman compactification, while, for spatial completely regular MT-algebras, taking the basis of zero-elements yields the Stone--\v{C}ech compactification. We then relate these constructions to the corresponding compactifications of frames of opens and, assuming choice, to the classical compactifications of spaces.

\subsection{Wallman extensions of MT-algebras}
\begin{definition}[Wallman bases]
Let $M$ be an MT-algebra and let $\B \subseteq \C(M)$. 
We consider the following conditions:
\begin{enumerate}[(B1)]
    \item $c = \bigwedge\{b \in \B \mid c \leq b\}$ for each $c \in \C(M)$; \label{cond-1}
    \item $\B$ is a bounded sublattice of $\C(M)$; \label{cond-2}
    \item $u = \bigvee \{b \in \B \mid b \leq u\}$ for each $u \in \O(M)$. \label{cond-3}
\end{enumerate}
We call $\B$ a \emph{closed basis} if it satisfies \ref{cond-1}, and a \emph{Wallman basis} if it additionally satisfies \ref{cond-2} and \ref{cond-3}. For $b \in \B$, we call $b$ \emph{$\B$-basic closed} and $\neg b$ \emph{$\B$-basic open}. We write $\BO$ for the set of $\B$-basic opens. When $\B$ is clear from context, we simply speak of \emph{basic closed} and \emph{basic open} elements.
\end{definition}

Let $M$ be an MT-algebra and $\B$ a bounded sublattice of $\C(M)$.\footnote{Every bounded distributive lattice $\B$ arises in this way. Indeed, if $\B$ is a bounded distributive lattice, then its boolean envelope embeds into a complete boolean algebra. Equipping this complete boolean algebra with the identity interior operator yields an MT-algebra $M$ such that $\B$ is isomorphic to a bounded sublattice of $\C(M)$.} Let $X_{\B}$ be the set of proper filters of $\B$, ordered by inclusion and equipped with the upset topology. 
Let $\RO(X_{\B})$ be the complete boolean algebra of regular opens of $X_{\B}$, that is, the upsets $U$ of $X_{\B}$ such that
\[
    U = \int \cl U = U^{**} = \{x \in X_{\B} \mid \upset x \subseteq \downset U\}.
\]
For $a \in M$, define
\[
    \sigma_{\B}(a)
    =
    \{x \in X_{\B} \mid 
       x \leq y \text{ implies }
       a \wedge b \neq 0 \text{ for all } b \in y\}.
\]
When $\B$ is clear from context, we write $\sigma(a)$. We first observe that $\sigma(a) \in \RO(X_{\B})$ for each $a \in M$. Indeed, suppose $x \in \sigma(a)$ and $x \leq z$. If $z \leq y$ and $b \in y$, then $x \leq y$, so $a \wedge b \neq 0$. Hence, $z \in \sigma(a)$, and therefore $\sigma(a)$ is an upset. Now suppose $x \in \sigma(a)^{**}$, so $\upset x \subseteq \downset \sigma(a)$. Let $x \leq y$ and $b \in y$. Since $y \in \upset x$, there exists $z \in \sigma(a)$ such that $y \leq z$. Thus, $b \in z$, and hence $a \wedge b \neq 0$. Therefore, $x \in \sigma(a)$, proving that $\sigma(a)^{**} \subseteq \sigma(a)$. Consequently, $\sigma$ defines a map
$\sigma : M \to \RO(X_{\B})$.

\begin{remark}
This construction is related to the use of spaces of proper filters and regular opens in choice-free Stone duality \cite{BH20}.
\end{remark}

The following lemma describes the elements of $\sigma(a)$ and gives a useful sufficient condition for membership. For $S \subseteq \B$, we write $\<S\rangle$ for the filter of $\B$ generated by $S$.

\needspace{2em}
\begin{lemma}
Let $a \in M$ and $x \in X_{\B}$. 
\begin{enumerate}[cref=lemma]
    \item $x \in \sigma(a)$ iff for each $b \in \B$ with $a \wedge b = 0$, there exists $c \in x$ such that $c \wedge b = 0$.\label{lem:sigma-def}

    \item If there exists $c \in x$ such that $c \leq a$, then $x \in \sigma(a)$. In particular, if $a \in \B$ and $a \in x$, then $x \in \sigma(a)$.\label{lem:inside-lemma}
\end{enumerate}
\end{lemma}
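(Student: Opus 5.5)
For \cref{lem:sigma-def}, I will unfold the definition of $\sigma(a)$ and use the fact that $\B$ is a bounded sublattice of $\C(M)$. This lets me form filters of the shape $\<x\cup\{b\}\>$ and argue about when they are proper.

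For the forward direction, suppose $x\in\sigma(a)$ and let $b\in\B$ with $a\wedge b=0$. Put $y=\<x\cup\{b\}\>$, the filter generated by $x$ and $b$. Since $x$ is a filter, its elements are $\B$-elements above $c\wedge b$ for some $c\in x$. If $c\wedge b\neq 0$ for every $c\in x$, then $y$ is proper, because $0$ is the bottom of $\B$ and meets in $\B$ agree with meets in $M$ on finite sets, as $\B$ is a sublattice of $\C(M)$. In that case $x\leq y$ and $b\in y$, so the definition of $\sigma(a)$ forces $a\wedge b\neq 0$, a contradiction. Hence some $c\in x$ satisfies $c\wedge b=0$.

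For the converse, assume the condition holds and let $x\leq y$ with $b\in y$. I must show $a\wedge b\neq 0$. Suppose instead $a\wedge b=0$. The hypothesis then gives $c\in x\subseteq y$ with $c\wedge b=0$. Since $c\wedge b\in y$ and $y$ is proper, this is impossible. Hence $a\wedge b\neq 0$.

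For \cref{lem:inside-lemma}, take $c\in x$ with $c\leq a$ and let $b\in\B$ with $a\wedge b=0$. Then $c\wedge b\leq a\wedge b=0$, so the criterion of \cref{lem:sigma-def} holds and $x\in\sigma(a)$. The "in particular" clause is the case $c=a$.

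The only delicate point is checking that $\<x\cup\{b\}\>$ is proper exactly when $c\wedge b\neq 0$ for all $c\in x$. This relies on $x$ being closed under finite meets and on $\B$ being a sublattice containing $0$, so that the bottom of $\B$ is the $0$ of $M$. No choice is used, since the filter is generated explicitly rather than extended.
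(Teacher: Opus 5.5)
Your proof is correct and matches the paper's argument essentially step for step. The forward direction uses properness of $\langle x\cup\{b\}\rangle$, the converse uses that $c\wedge b=0$ would lie in the proper filter $y$, and part (2) applies part (1) directly. Your added check that $\langle x\cup\{b\}\rangle$ is proper exactly when $c\wedge b\neq 0$ for all $c\in x$ (since finite meets and the bottom of $\B$ agree with those of $M$) just spells out a step the paper leaves implicit.
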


\begin{proof}
\tref{lem:sigma-def}
Suppose $x \in \sigma(a)$ and let $b \in \B$ satisfy $a \wedge b = 0$. Consider $y = \< x \cup \{b\}\rangle$. If $y$ were proper, then $x \leq y$ and $b \in y$, contradicting $x \in \sigma(a)$. Thus, there exists $c \in x$ such that $c \wedge b = 0$.
Conversely, suppose the stated condition holds. Let $y \in X_{\B}$ with $x \leq y$ and let $d \in y$. If $a \wedge d = 0$, then there exists $c \in x$ such that $c \wedge d = 0$. Since $c,d \in y$, this contradicts the fact that $y$ is proper. Hence, $a \wedge d \neq 0$, and therefore $x \in \sigma(a)$.

\tref{lem:inside-lemma}
Suppose $c \in x$ and $c \leq a$. If $b \in \B$ satisfies $a \wedge b = 0$, then $c \wedge b = 0$. Thus, $x \in \sigma(a)$ by \tref{lem:sigma-def}.
\end{proof}

We next record the properties of $\sigma$ that will be needed below. Recall that joins and meets in $\RO(X_{\B})$ are given by
\[
    A \vee B = (A \cup B)^{**}
    \qquad\text{and}\qquad
    A \wedge B = A \cap B.
\]
In particular, the following lemma shows that $\sigma$ preserves finite joins and preserves the boolean operations on the basic open and closed elements.

\begin{lemma}\label{sigma-properties}
\leavevmode
    Let $a,b \in M$.
    \begin{enumerate}[cref=lemma]
        \item $a \leq b$ implies $\sigma(a) \subseteq \sigma(b)$, i.e., $\sigma$ is monotone;\label{lem:sigma-order-preserving}

        \item $\sigma(a \vee b) = \sigma(a) \vee \sigma(b)$;\label{lem:opens-RO-joins}
        
        \item If $a \in \BC \cup \BO$, then $\sigma(\neg a) = \sigma(a)^*$;\label{lem:sigma-neg}  
        
        \item If $a,b \in \BC \cup \BO$, then $\sigma(a \wedge b) = \sigma(a) \wedge \sigma(b)$;\label{lem:sigma-meets}\label{lem:closed-RO-meets}
        
        \item If $a,b \in \BC \cup \BO$, then $a \leq b$ iff $\sigma(a) \subseteq \sigma(b)$. \label{lem:injective-basics}
    \end{enumerate}
\end{lemma}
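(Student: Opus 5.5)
We verify the five items one at a time. The main tool is the membership criterion \cref{lem:sigma-def}, together with its consequence \cref{lem:inside-lemma}. We also use two simple facts. First, $\B$ is closed under finite meets. Second, if $y\in\sigma(a)$ with $a\in\B$, then $a\wedge c\neq 0$ for every $c\in y$, so $\<y\cup\{a\}\>$ is a proper filter.

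\emph{Items (1) and (2).} Monotonicity is immediate from the definition of $\sigma$. For joins, monotonicity and $\sigma(a\vee b)\in\RO(X_\B)$ give $\sigma(a)\vee\sigma(b)\subseteq\sigma(a\vee b)$. For the reverse inclusion, let $x\in\sigma(a\vee b)$ and $y\ge x$. We show that $y$ lies below some point of $\sigma(a)\cup\sigma(b)$.
\begin{itemize}
\item If $y$ has an extension in $\sigma(a)$, we are done.
\item Otherwise $y\notin\sigma(a)$, so there are $y'\ge y$ and $b_1\in y'$ with $a\wedge b_1=0$.
\item We claim $y'\in\sigma(b)$. If not, there are $z\ge y'$ and $b_2\in z$ with $b\wedge b_2=0$. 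Then $b_1\wedge b_2\in z$ and $(a\vee b)\wedge b_1\wedge b_2=0$. Since $z\ge x$, this contradicts $x\in\sigma(a\vee b)$.
\end{itemize}
Hence $\upset x\subseteq\downset(\sigma(a)\cup\sigma(b))$, that is, $x\in(\sigma(a)\cup\sigma(b))^{**}$.

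\emph{Item (3).} First take $a\in\B$.
\begin{itemize}
\item $\sigma(\neg a)\subseteq\sigma(a)^*$: suppose $x\in\sigma(\neg a)$ and some $y\ge x$ lies in $\sigma(a)$. Then $z=\<y\cup\{a\}\>$ is proper, contains $a$, and satisfies $\neg a\wedge a=0$ and $z\ge x$. This contradicts $x\in\sigma(\neg a)$.
\item $\sigma(a)^*\subseteq\sigma(\neg a)$: let $x\in\sigma(a)^*$ and $d\in\B$ with $\neg a\wedge d=0$, i.e.\ $d\le a$. If no $c\in x$ satisfies $c\wedge d=0$, then $\<x\cup\{d\}\>$ is proper. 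By \cref{lem:inside-lemma} it lies in $\sigma(a)$, a contradiction. So \cref{lem:sigma-def} gives $x\in\sigma(\neg a)$.
\end{itemize}
For a basic open $a=\neg b$, apply the closed case to $b$: $\sigma(a)^*=\sigma(b)^{**}=\sigma(b)=\sigma(\neg a)$.

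\emph{Item (4).} In every case the inclusion $\subseteq$ is monotonicity, so only $\supseteq$ needs proof.
\begin{itemize}
\item Both closed: let $x\in\sigma(a)\cap\sigma(b)$ and $a\wedge b\wedge d=0$. Apply \cref{lem:sigma-def} to $a$ with the basic element $b\wedge d$ to get $c_1\in x$ with $c_1\wedge b\wedge d=0$. Apply it again to $b$ with $c_1\wedge d$ to get $c_2\in x$ with $c_2\wedge c_1\wedge d=0$. Then $c_1\wedge c_2$ is the required element.
\item Both open: use De Morgan with (2) and (3), $\sigma(\neg c\wedge\neg d)=\sigma(c\vee d)^*=(\sigma(c)\vee\sigma(d))^*=\sigma(\neg c)\cap\sigma(\neg d)$.
\item Mixed, $a\in\B$ and $\neg d$ with $d\in\B$: let $x\in\sigma(a)\cap\sigma(d)^*$ and $e\in\B$ with $a\wedge e\le d$. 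If no $c\in x$ is disjoint from $e$, then $y=\<x\cup\{e\}\>\ge x$ lies in $\sigma(a)$, so $\<y\cup\{a\}\>$ is proper. It contains $a\wedge e\le d$, hence lies in $\sigma(d)$ by \cref{lem:inside-lemma}. This contradicts $x\in\sigma(d)^*$.
\end{itemize}

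\emph{Item (5), the main obstacle.} The forward direction is (1). For the converse, set $e=a\wedge\neg b$. By (3) and (4), $\sigma(e)=\sigma(a)\cap\sigma(b)^*$, which is empty when $\sigma(a)\subseteq\sigma(b)$. So it suffices to show that $e\ne0$ forces $\sigma(e)\ne\varnothing$.

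This step cannot work for an arbitrary bounded sublattice. With $\B=\{0,c,1\}$, the trivial filter $\{1\}$ lies in $\sigma(c)$, so $\sigma(1)\subseteq\sigma(c)$ even though $1\nleq c$. I therefore intend to use \ref{cond-3}, i.e.\ assume $\B$ is a Wallman basis (at least for (5)).

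Write $e=u\wedge c$, where $u$ is open (basic open or $1$) and $c\in\B\cup\{1\}$. By \ref{cond-3}, $u$ is a join of elements of $\B$ below it, and $e\neq0$, so some $d\in\B$ with $d\le u$ satisfies $d\wedge c\ne0$. The principal filter $\upset(d\wedge c)$ is then proper. Since $d\wedge c\le e$, it lies in $\sigma(e)$ by \cref{lem:inside-lemma}.
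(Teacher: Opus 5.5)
Your proof is correct and follows essentially the paper's argument. Items (1)--(4) use the same filter-extension reasoning via \cref{lem:sigma-def} and \cref{lem:inside-lemma}, with only minor reorganization: you reduce the basic-open case of (3) to $\sigma(b)^{**}=\sigma(b)$, and you split (4) into closed and mixed cases where the paper handles ``one factor in $\mathcal{B}$'' at once. Item (5) uses the same principal filter $\langle c\rangle$ on a nonzero basic closed $c\le a\wedge\neg b$. Your caveat is also right. The paper's proof of (5) likewise invokes \ref{cond-3}, so that item genuinely needs a Wallman basis, and your example $\mathcal{B}=\{0,c,1\}$ shows it fails for an arbitrary bounded sublattice.
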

\begin{proof}
    \tref{lem:sigma-order-preserving} Let $x \in \sigma(a)$. Suppose $x \leq y$ and $c \in y$. Then $c \wedge a \neq 0$ since $x \in \sigma(a)$. Therefore, $c \wedge b \neq 0$ since $a \leq b$. Thus, $x \in \sigma(b)$.

    \tref{lem:opens-RO-joins} By \tref{lem:sigma-order-preserving}, $\sigma(a) \subseteq \sigma(a \vee b)$ and $\sigma(b) \subseteq \sigma(a \vee b)$. Since $\sigma(a \vee b)$ is regular open, it follows that $\sigma(a) \vee \sigma(b) \subseteq \sigma(a \vee b)$. Conversely, suppose $x \in \sigma(a \vee b)$. Let $x \leq y$. If $y \in \sigma(a)$, there is nothing to prove. Otherwise, by \cref{lem:sigma-def}, there exists $c \in \B$ such that $c \wedge a = 0$ and $e \wedge c \neq 0$ for each $e \in y$. Hence $z := \< y \cup {c} \rangle$ is a proper filter with $y \leq z$. We claim that $z \in \sigma(b)$. If not, then again by \cref{lem:sigma-def}, there exists $d \in \B$ such that $d \wedge b = 0$ and $e \wedge d \neq 0$ for each $e \in z$. Thus $z' := \< z \cup {d} \rangle$ is a proper filter. Since $x \leq y \leq z \leq z'$ and $x \in \sigma(a \vee b)$, we have $z' \in \sigma(a \vee b)$. But $c,d \in z'$, so $c \wedge d \in z'$, while $(c \wedge d) \wedge (a \vee b) = (c \wedge d \wedge a) \vee (c \wedge d \wedge b) = 0$, contradicting $z' \in \sigma(a \vee b)$. Therefore $z \in \sigma(b)$, and since $y \leq z$, we get $y \in \downset(\sigma(a) \cup \sigma(b))$. Thus $\upset x \subseteq \downset(\sigma(a) \cup \sigma(b))$, so $x \in (\sigma(a) \cup \sigma(b))^{**} = \sigma(a) \vee \sigma(b)$.

    \tref{lem:sigma-neg} Suppose $x \in \sigma(\neg a)$ and $y \in \upset x \cap \sigma(a)$. If $a \in \BC$, then $\< y \cup \{a\}\rangle$ is a proper filter extending $x$ and containing $a$, contradicting $x \in \sigma(\neg a)$. If $a \in \BO$, then $\neg a \in \BC$ and $\< y \cup \{\neg a\}\rangle$ is a proper filter extending $y$ and containing $\neg a$, contradicting $y \in \sigma(a)$. Thus, $\sigma(\neg a) \subseteq \sigma(a)^*$. Conversely, if $x \in \sigma(a)^*$ and $y \geq x$, then $y \notin \sigma(a)$, so there exist $z \geq y$ and $c \in z$ such that $c \wedge a = 0$. Since $c \leq \neg a$, \cref{lem:inside-lemma} yields $z \in \sigma(\neg a)$. Hence, $x \in \sigma(\neg a)^{**}=\sigma(\neg a)$.

    \tref{lem:sigma-meets} By \tref{lem:sigma-order-preserving}, we have
    $\sigma(a \wedge b) \subseteq \sigma(a) \wedge \sigma(b)$. For the reverse
    inclusion, first suppose that $a \in \BC$. Let
    $x \in \sigma(a) \cap \sigma(b)$, let $x \leq y$, and let $c \in y$.
    Since $x \in \sigma(a)$, the filter $z=\< y \cup \{a\}\rangle$ is
    proper. Since $x \leq z$ and $x \in \sigma(b)$, we have
    $c \wedge a \wedge b \neq 0$. Thus, $x \in \sigma(a \wedge b)$.
    The same argument applies if $b \in \BC$. It remains to consider the
    case $a,b \in \BO$. Then $\neg a \vee \neg b \in \BC$, and hence, by
    \tref{lem:opens-RO-joins} and \tref{lem:sigma-neg},
    \[
        \sigma(a \wedge b)
        = \sigma(\neg(\neg a \vee \neg b))
        = \sigma(\neg a \vee \neg b)^*
        = \sigma(a) \wedge \sigma(b).
    \]

    \tref{lem:injective-basics} The left-to-right implication follows from \tref{lem:sigma-order-preserving}. Conversely, suppose $a \nleq b$. Since $a,b \in \BC \cup \BO$, there exists a non-zero $c \in \BC$ such that $c \leq a \wedge \neg b$: this is immediate if $a \wedge \neg b \in \BC$, and otherwise follows from \ref{cond-3}. Then $\< c\rangle \in \sigma(a)$ by \tref{lem:inside-lemma}, while $\< c\rangle \notin \sigma(b)$ since $c \wedge b=0$. Thus, $\sigma(a) \nsubseteq \sigma(b)$.
\end{proof}

We view $\RO(X_{\B})$ as an MT-algebra by declaring its closed elements to be the arbitrary meets of elements of $\sigma[\B]$. By \cref{lem:sigma-neg}, its open elements are therefore the joins of elements $\sigma(u)$, where $u$ is basic open.

\begin{theorem}\label{thm:WM-compact}
Let $M$ be an MT-algebra and $\B$ a bounded sublattice of $\C(M)$. Then 
$\RO(X_{\B})$ is compact and $T_1$.
\end{theorem}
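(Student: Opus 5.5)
The plan is to check three things in order: that the declared closed elements really come from an interior operator, that the resulting algebra is compact, and that it is $T_1$.

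\textbf{Well-definedness.} Let $\mathcal K$ be the set of all meets (intersections) in $\RO(X_{\B})$ of subfamilies of $\sigma[\B]$. For $\diamond a:=\bigwedge\{k\in\mathcal K\mid a\le k\}$ to be a closure operator, $\mathcal K$ must be closed under arbitrary meets and finite joins, and contain $0$ and $1$.
\begin{itemize}
\item Arbitrary meets hold by construction.
\item Top: $1=\sigma(1)$, since every proper filter contains $1$ and $1\wedge b=b\neq0$ for each $b$ in a proper filter.
\item Bottom: $\sigma(0)=\varnothing$, since every $x\in X_{\B}$ contains $1$ and $0\wedge 1=0$. So $0=\sigma(0)\in\mathcal K$.
\item Finite joins: in the complete boolean algebra $\RO(X_{\B})$ we have
\[
\textstyle\bigl(\bigwedge_i\sigma(b_i)\bigr)\vee\bigl(\bigwedge_j\sigma(c_j)\bigr)=\bigwedge_{i,j}\bigl(\sigma(b_i)\vee\sigma(c_j)\bigr)=\bigwedge_{i,j}\sigma(b_i\vee c_j).
\]
The last equality is \cref{lem:opens-RO-joins}, and $b_i\vee c_j\in\B$ by \ref{cond-2}.
\end{itemize}
Hence $\square:=\neg\diamond\neg$ is an interior operator.

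\textbf{Compactness.} I will use \cref{compactness-meets} with $a=1$. Every closed element is a meet of elements of $\sigma[\B]$, so it suffices to show the following: if $S\subseteq\B$ and every finite $T\subseteq S$ has $\bigwedge\sigma[T]\neq\varnothing$, then $\bigcap\sigma[S]\neq\varnothing$.
\begin{itemize}
\item By \cref{lem:closed-RO-meets} (extended to finitely many basic closed elements), $\bigwedge\sigma[T]=\sigma(\bigwedge T)$.
\item Since $\sigma(0)=\varnothing$, this forces $\bigwedge T\neq 0$ for every finite $T\subseteq S$.
\item Hence the filter $x=\langle S\rangle$ is proper.
\item By \cref{lem:inside-lemma}, $x\in\sigma(b)$ for every $b\in S$, so $x\in\bigcap\sigma[S]$.
\end{itemize}
Meets in $\RO(X_{\B})$ are intersections, so this is exactly what is needed. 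The argument is choice-free: the witness is the generated filter itself, not a maximal filter.

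\textbf{$T_1$.} Let $U\in\RO(X_{\B})$ be non-zero. I will show that $U$ is a union, hence a join, of closed elements. For $x\in U$, put $C_x=\bigcap_{b\in x}\sigma(b)$. Then $C_x$ is closed, and $x\in C_x$ by \cref{lem:inside-lemma}. The key step is $C_x\subseteq U$, argued as follows.
\begin{itemize}
\item Take $y\in C_x$ and any $y'\ge y$. Then $y'\in C_x$, because $C_x$ is an upset.
\item Let $z=\langle y'\cup x\rangle$. To see $z$ is proper, note that a finite meet of elements of $x$ is some $b\in x$. Since $y'\in\sigma(b)$, we get $c\wedge b\neq0$ for all $c\in y'$.
\item So $z\in X_{\B}$ with $z\ge x$ and $z\ge y'$. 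Because $U$ is an upset containing $x$, we get $z\in U$.
\item Thus $\upset y\subseteq\downset U$, so $y\in U^{**}=U$.
\end{itemize}
Therefore $U=\bigcup_{x\in U}C_x\le\bigvee_{x\in U}C_x\le U$, and the closed elements join-generate $\RO(X_{\B})$.

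I expect this last step, the properness of $\langle y'\cup x\rangle$ and the resulting inclusion $C_x\subseteq U$, to be the main point requiring care. The compactness step, by contrast, rests only on \cref{lem:closed-RO-meets} and $\sigma(0)=\varnothing$.
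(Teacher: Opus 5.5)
Your proposal is correct and follows the paper's proof: compactness uses the generated filter $\langle S\rangle$ as the choice-free witness for the finite intersection property, and $T_1$ uses the closed elements $C_x=\bigcap_{b\in x}\sigma(b)$ together with properness of $\langle y'\cup x\rangle$. You also check explicitly that arbitrary meets of elements of $\sigma[\B]$ are closed under finite joins, so that they really are the closed elements of an interior operator; this is a sound addition that the paper leaves implicit, and proving join-generation directly rather than ``every non-zero element contains a non-zero closed element'' makes no real difference.
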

\begin{proof}
Let $A \in \RO(X_{\B})$ be non-zero and suppose $x \in A$. Put
$C = \bigcap\{\sigma(b) \mid b \in x\}$.
Then $C$ is closed and $x \in C$ by \cref{lem:inside-lemma}. We show that $C \subseteq A$. Let $y \in C$ and $y \leq z$. Since $C$ is an upset, $z \in C$. Hence, $z \in \sigma(b)$ for every $b \in x$, so $\< x \cup z\rangle$ is a proper filter of $\B$. Since it extends $x$ and $A$ is an upset, we have $\< x \cup z\rangle \in A$. Since it also extends $z$, we obtain $z \in \downset A$. Thus, $\upset y \subseteq \downset A$, and hence $y \in A^{**}=A$. Therefore, every non-zero element of $\RO(X_{\B})$ contains a non-zero closed element, so $\RO(X_{\B})$ is $T_1$.

For compactness, since the closed elements are meet-generated by $\sigma[\B]$, it suffices to consider a family $\mathcal C \subseteq \B$ such that $\bigcap\{\sigma(c) \mid c \in \mathcal C'\} \neq \varnothing$
for every finite $\mathcal C' \subseteq \mathcal C$. If $c_1,\dots,c_n \in \mathcal C$, then $\sigma(c_1) \cap \cdots \cap \sigma(c_n)
    = \sigma(c_1 \wedge \cdots \wedge c_n) \neq \varnothing$,
so $c_1 \wedge \cdots \wedge c_n \neq 0$ (because no filter can consist of elements disjoint from $0$). Thus, $\<\mathcal C\rangle$ is a proper filter of $\B$. By \cref{lem:inside-lemma}, $\<\mathcal C\rangle \in \bigcap\{\sigma(c) \mid c \in \mathcal C\}$.
Therefore, $\RO(X_{\B})$ is compact.
\end{proof}

\begin{theorem}\label{prop:WM-unique}
Let $M$ be a compact $T_1$-algebra and $\B$ a Wallman basis of $M$. Then $M \cong \RO(X_{\B})$.
\end{theorem}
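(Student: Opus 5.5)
Both $M$ and $\RO(X_{\B})$ are $T_1$-algebras; the second is compact and $T_1$ by \cref{thm:WM-compact}. So by \cref{lem:T1-iso} it suffices to show $\C(M)\cong\C(\RO(X_{\B}))$. Equivalently, we can show $\O(M)\cong\O(\RO(X_{\B}))$ as frames. The candidate isomorphism is $\sigma$ restricted to closed elements. The closed elements of $\RO(X_{\B})$ are exactly the intersections $\bigcap\{\sigma(b)\mid b\in S\}$ with $S\subseteq\B$. By \ref{cond-1}, each $c\in\C(M)$ is $\bigwedge\{b\in\B\mid c\le b\}$. So I would define $\tau(c)=\bigcap\{\sigma(b)\mid b\in\B,\ c\le b\}$ and prove that $\tau\colon\C(M)\to\C(\RO(X_{\B}))$ is an order isomorphism. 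Both posets are complete lattices, so an order isomorphism preserves all meets and joins, and \cref{lem:T1-iso} then gives $M\cong\RO(X_{\B})$.

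Monotonicity of $\tau$ is clear. Surjectivity: for a closed $C=\bigcap_{b\in S}\sigma(b)$, put $c=\bigwedge S\in\C(M)$; then we need $\tau(c)=C$. The inclusion $C\supseteq\tau(c)$ is immediate, since every $b\in S$ lies above $c$. Both the reverse inclusion and order-reflection reduce to one key claim.

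\textbf{Key claim.} For $S\subseteq\B$ and $b\in\B$ with $\bigwedge S\le b$, we have $\bigcap_{s\in S}\sigma(s)\subseteq\sigma(b)$. I expect this to be the main obstacle, and this is where compactness of $M$ enters. To prove it, note $\bigwedge S\wedge\neg b=0$. Here $\neg b$ is open, so by \ref{cond-3} it is a join of basic closed elements $d\le\neg b$. For each such $d$ we have $d\wedge\bigwedge S=0$, with $d$ and all of $S$ closed. Elements of $\B$ are closed, hence compact by \cref{closed-below-compact}. So \cref{compactness-meets} gives a finite $T\subseteq S$ with $d\wedge\bigwedge T=0$, i.e.\ $d\le\neg(\bigwedge T)$ with $\bigwedge T\in\B$. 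By \cref{lem:sigma-meets}, $\bigcap_{s\in T}\sigma(s)=\sigma(\bigwedge T)$, and by \cref{lem:sigma-neg} this is disjoint from $\sigma(\neg\bigwedge T)\supseteq\sigma(d)$. So every $x\in\bigcap_S\sigma(s)$ lies outside $\sigma(d)$, and likewise every $y\ge x$ does, because these sets are upsets. Hence $x\in\sigma(d)^*$ for every basic $d\le\neg b$. I then need $\bigcap_d\sigma(d)^*\subseteq\sigma(b)$, i.e.\ $\sigma(b)^*=\sigma(\neg b)$ is contained in $\bigvee_d\sigma(d)$. A direct argument: take $x\notin\sigma(b)$. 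Then, via \cref{lem:sigma-def}, some $y\ge x$ contains an element $e\in\B$ with $e\wedge b=0$, so $e\le\neg b$. Now $e$ is closed, hence compact, and $e\le\bigvee d$ gives $e\le d_1\vee\dots\vee d_n=:d'\in\B$ with $d'\le\neg b$. Then $y\in\sigma(d')$ by \cref{lem:inside-lemma}, contradicting $x\in\sigma(d')^*$ (take $d=d'$). So $x\in\sigma(b)$.

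\textbf{Order-reflection.} Suppose $\tau(c)\subseteq\tau(c')$, but $c\not\le c'$. By \ref{cond-1} there is $b'\in\B$ with $c'\le b'$ and $c\not\le b'$. Using \ref{cond-3} on $\neg b'$, the element $c\wedge\neg b'\ne0$ meets some basic $d\le\neg b'$. Then \cref{compactness-meets} applied to $\{d\}\cup\{b\in\B\mid c\le b\}$ yields a finite $b_0=b_1\wedge\dots\wedge b_n\ge c$ with $d\wedge b_0\ne0$. Here $d\wedge b_0\in\B$ is nonzero. The principal filter $\langle d\wedge b_0\rangle$ lies in $\tau(c)$: each $b\ge c$ meets every element of any extension, by the key claim plus a similar finite-meet argument. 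But it is not in $\sigma(b')$, since $d\wedge b'=0$. This contradicts $\tau(c)\subseteq\sigma(b')$ and completes the plan.
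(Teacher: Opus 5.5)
Your overall plan matches the paper's. You reduce to closed elements via \cref{lem:T1-iso} and use $\tau(c)=\bigcap\{\sigma(b)\mid b\in S_c\}$, which the paper shows equals $\sigma(c)$. You then get surjectivity from a compactness claim and order-reflection from a filter in $\tau(c)\setminus\sigma(b')$.

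\textbf{The order-reflection step fails as written.} For $f,b\in\B$, condition (B3) gives $\langle f\rangle\in\sigma(b)$ iff $f\le b$. Indeed, if $f\nleq b$, some basic $e\le\neg b$ meets $f$; then $e\wedge b=0$, yet no element of $\langle f\rangle$ is disjoint from $e$. Consequently $\langle f\rangle\in\tau(c)$ iff $f\le\bigwedge S_c=c$. So your principal filter $\langle d\wedge b_0\rangle$ lies in $\tau(c)$ only if the nonzero basic element $d\wedge b_0$ is below $c$. In general no nonzero element of $\B$ lies below a given closed $c$. For example, in $\P([0,\omega_1])$ with $\B=\Z(M)$, every zero-element is a $G_\delta$, so none lies below the non-$G_\delta$ point $\{\omega_1\}$. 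No ``key claim plus finite-meet argument'' can rescue this, because the assertion itself is false. Your appeal to \cref{compactness-meets} there is also vacuous, since $d\wedge b_0\geq d\wedge c\neq0$ for every $b_0\geq c$.

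The repair is the paper's: use the non-principal filter $x=\langle S_c\cup\{d\}\rangle$.
\begin{itemize}
\item It is proper, because every finite meet of generators lies above $c\wedge d\neq0$.
\item It lies in $\tau(c)$ by \cref{lem:inside-lemma}, since $S_c\subseteq x$.
\item Since $d\in x$ and $d\wedge b'=0$, we get $x\notin\sigma(b')\supseteq\tau(c')$.
\end{itemize}
No compactness is needed in this direction.

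\textbf{A smaller slip in your key claim.} From $e\le\bigvee d$ you extract a finite subcover by compactness of $e$. But compactness of an element of an MT-algebra concerns covers by open elements, and your $d$'s are closed. This is harmless: $e$ is itself a basic closed element below $\neg b$, so you can take $d'=e$.

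Once you do that, the detour through pseudocomplements becomes unnecessary and the argument reduces to the paper's:
\begin{itemize}
\item Suppose $x\in\bigcap\sigma[S]$ but $x\notin\sigma(b)$. Pick $y\geq x$ and $e\in y$ with $e\wedge b=0$.
\item Then $e\wedge\bigwedge S=0$, and compactness of $e$ with \cref{compactness-meets} gives a finite $T\subseteq S$ with $e\wedge\bigwedge T=0$.
\item This contradicts $y\in\bigcap\sigma[T]=\sigma(\bigwedge T)$, which holds by \cref{lem:closed-RO-meets} and upward closure.
\end{itemize}
The paper packages this as $\sigma(\bigwedge S)=\bigcap\sigma[S]$ for all $S\subseteq\B$. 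That identity also shows your $\tau$ is just $\sigma$ restricted to $\C(M)$.
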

\begin{proof}
Let $N=\RO(X_{\B})$. We first show that, for every $S\subseteq\B$, $\sigma\Bigl(\bigwedge S\Bigr)=\bigcap \sigma[S]$. By monotonicity, the left-hand side is contained in the right-hand side. Conversely, suppose $x\notin\sigma(\bigwedge S)$. Then there exist $y\geq x$ and $d\in y$ such that $d\wedge\bigwedge S=0$. Since $d$ is closed and $M$ is compact, $d$ is compact by \cref{closed-below-compact}. Hence, there is a finite $S'\subseteq S$ such that $d\wedge\bigwedge S'=0$ by \cref{compactness-meets}. It follows that $x\notin\sigma(\bigwedge S')=\bigcap\sigma[S']$, where the equality follows from \cref{lem:closed-RO-meets}. Thus, $x\notin\bigcap\sigma[S]$, proving the claim.

For $c\in\C(M)$, put $S_c=\{b\in\B\mid c\leq b\}$. By \ref{cond-1}, $c=\bigwedge S_c$, and hence the claim yields $\sigma(c)=\bigcap\sigma[S_c]\in\C(N)$. Conversely, every closed element of $N$ is of the form $\bigcap\sigma[S]$ for some $S\subseteq\B$, and hence is equal to $\sigma(\bigwedge S)$. Thus, $\sigma:\C(M)\to\C(N)$ is onto.

It remains to show that $\sigma$ reflects order on closed elements. Suppose $c,d\in\C(M)$ and $c\nleq d$. By \ref{cond-1}, there exists $b\in\B$ such that $d\leq b$ and $c\nleq b$. Therefore, $c\wedge\neg b\neq0$. Since $\neg b$ is open, \ref{cond-3} yields $e\in\B$ such that $e\leq\neg b$ and $c\wedge e\neq0$. Let $x=\<S_c\cup\{e\}\rangle$. Then $x$ is a proper filter, since $c\leq f$ for every $f\in S_c$ and $c\wedge e\neq0$. Moreover, $x\in\sigma(c)=\bigcap\sigma[S_c]$, since $S_c\subseteq x$. On the other hand, $e\in x$ and $e\wedge b=0$, so $x\notin\sigma(b)$. Since $d\leq b$, we have $\sigma(d)\subseteq\sigma(b)$, and hence $x\notin\sigma(d)$. Therefore, $\sigma(c)\nsubseteq\sigma(d)$.

Thus, $\sigma:\C(M)\to\C(N)$ is an order isomorphism. Since both $M$ and $N$ are $T_1$-algebras, it follows from \cref{lem:T1-iso} that $M\cong N=\RO(X_{\B})$.
\end{proof}

\begin{lemma}\label{lem:sigma-atoms}
    Let $M$ be a $T_1$-algebra and let $\B$ be a Wallman basis of $M$. Then $\sigma_{\B}(a)$ is an atom of $\RO(X_{\B})$ for each $a\in\at(M)$.
\end{lemma}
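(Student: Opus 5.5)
The plan is to identify a distinguished proper filter of $\B$ that lies in $\sigma(a)$ and sits above every element of $\sigma(a)$. Regularity of nonzero regular opens below $\sigma(a)$ will then force them to equal $\sigma(a)$. Throughout, fix $a\in\at(M)$ and put
\[
    p=\{b\in\B\mid a\leq b\}.
\]

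First, $p$ is a proper filter of $\B$. It contains $1$ by \ref{cond-2}, it is an upset, and it is closed under binary meets because $\B$ is a sublattice of $\C(M)$. It does not contain $0$ because $a\neq0$. So $p\in X_{\B}$.

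Next I show $p\in\sigma(a)$ using \cref{lem:sigma-def}. Let $b\in\B$ with $a\wedge b=0$. Then $a\leq\neg b$, and $\neg b$ is open. By \ref{cond-3}, $\neg b=\bigvee\{e\in\B\mid e\leq\neg b\}$. Since $a$ is an atom below this join, $a\wedge e\neq0$ for some such $e$, and hence $a\leq e$. Thus $e\in p$ and $e\wedge b=0$, as required. In particular $\sigma(a)\neq\varnothing$.

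The key observation is that $x\subseteq p$ for every $x\in\sigma(a)$. Indeed, if $c\in x$, then taking $y=x$ in the definition of $\sigma(a)$ gives $a\wedge c\neq0$. Since $a$ is an atom, $a\leq c$, so $c\in p$. Thus $\sigma(a)\subseteq\downset p$.

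Now let $0\neq A\in\RO(X_{\B})$ with $A\subseteq\sigma(a)$; I show $A=\sigma(a)$. Pick $x\in A$. Then $x\leq p$ by the observation above, and $A$ is an upset, so $p\in A$. For any $z\in\sigma(a)$, we have $\upset z\subseteq\sigma(a)$ because $\sigma(a)$ is an upset. Combining with $\sigma(a)\subseteq\downset p\subseteq\downset A$ gives $\upset z\subseteq\downset A$. Hence $z\in A^{**}=A$. So $\sigma(a)$ is an atom of $\RO(X_{\B})$.

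This argument uses only \ref{cond-2}, \ref{cond-3} and $a\neq0$. The $T_1$ hypothesis (which makes the atom $a$ closed) is not actually needed here. The only step requiring care is showing $p\in\sigma(a)$: one must produce, for each basic closed $b$ disjoint from $a$, a single basic closed element in $p$ that is disjoint from $b$. Condition \ref{cond-3} together with atomicity of $a$ handles this, as described above.
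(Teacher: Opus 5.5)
Your proof is correct and follows essentially the same route as the paper. The paper uses the same filter $p=\{b\in\B\mid a\leq b\}$ (there called $x_a$), shows it lies in $\sigma(a)$ via \ref{cond-3} and atomicity, and shows every element of $\sigma(a)$ lies below it, so that any nonzero regular open $A\subseteq\sigma(a)$ contains $p$ and hence all of $\sigma(a)$; your remark that neither the $T_1$ hypothesis nor \ref{cond-1} is used also holds for the paper's argument.
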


\begin{proof}
    For $a\in\at(M)$, let $x_a = \{b\in\B\mid a\leq b\}$.
    Then $x_a$ is a proper filter of $\B$. We first show that $x_a\in\sigma_{\B}(a)$. Let $b\in\B$ such that $a\wedge b=0$. Then $a\leq\neg b\in\O(M)$. By \ref{cond-3} and since $a$ is an atom, there exists $c\in\B$ such that $a\leq c\leq\neg b$. Thus $c\in x_a$ and $c\wedge b=0$, so
    $x_a\in\sigma_{\B}(a)$ by \cref{lem:sigma-def}. We claim that, for every $A\in\RO(X_{\B})$, $\sigma_{\B}(a)\subseteq A$ iff
        $x_a\in A$.
    The left-to-right implication follows from $x_a\in\sigma_{\B}(a)$.
    Conversely, suppose $x_a\in A$ and let $x\in\sigma_{\B}(a)$. If
    $x\leq y$, then $a\wedge b\neq 0$ for every $b\in y$, and hence
    $a\leq b$ for every $b\in y$. Thus $y\leq x_a$, so
    $y\in\downset A$. Therefore, $\uparrow x\subseteq\downarrow A$, and hence $x\in A^{**}=A$. This proves the claim.

    Finally, let $A\subseteq\sigma_{\B}(a)$ be non-zero with
    $A\in\RO(X_{\B})$. Suppose $x\in A$. Since $x\in\sigma_{\B}(a)$,
    we have $x\leq x_a$, so $x_a\in A$. By the claim,
    $\sigma_{\B}(a)\subseteq A$. Thus $A=\sigma_{\B}(a)$, proving that
    $\sigma_{\B}(a)$ is an atom.
\end{proof}

Let $M$ be a spatial $T_1$-algebra and let $\B$ be a Wallman basis of
    $M$. We define a map $\rho_{\B}:\RO(X_{\B})\to M$ by
    \[
        \rho_{\B}(A)
        =
        \bigvee\{a\in\at(M)\mid \sigma_{\B}(a)\subseteq A\}
    \]
    for every $A \in \RO(X_{\B})$.

\begin{lemma}\label{lem:rho-wallman-basis}
    The map $\rho_{\B}:\RO(X_{\B})\to M$ is a complete boolean
    homomorphism such that $\rho_{\B}(\sigma_{\B}(b))=b$ and 
    $\rho_{\B}(\sigma_{\B}(\neg b))=\neg b$ for each $b\in\B$.
\end{lemma}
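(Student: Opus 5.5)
The key observation is that $\rho_{\B}$ is, up to the identification $M\cong\P(\at(M))$ (valid since $M$ is spatial), just the preimage map along the assignment $a\mapsto\sigma_{\B}(a)$ from atoms of $M$ to atoms of $\RO(X_{\B})$, which is well defined by \cref{lem:sigma-atoms}. Explicitly, $\rho_{\B}(A)=\bigvee\{a\in\at(M)\mid \sigma(a)\in\eta(A)\}$, where for atoms $\sigma(a)\subseteq A$ iff $\sigma(a)\le A$ in $\RO(X_{\B})$. Preimage maps along functions between sets of atoms preserve all meets, joins and complements. The plan is therefore to verify three things directly for an atom $a$ of $M$, and then compute $\rho_{\B}$ on $\sigma(b)$ and $\sigma(\neg b)$.

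\emph{Complete boolean homomorphism.} Since $\sigma(a)$ is an atom, for any $A$ exactly one of $\sigma(a)\subseteq A$ and $\sigma(a)\subseteq A^*$ holds: $\sigma(a)\cap A$ is regular open and below the atom $\sigma(a)$, so it is either $\sigma(a)$ or $\varnothing$, and in the latter case $\sigma(a)\subseteq A^*$. Hence the atoms below $\rho(A^*)$ are exactly those not below $\rho(A)$. As $M$ is atomic, this gives $\rho(A^*)=\neg\rho(A)$. For meets, $\sigma(a)\subseteq\bigcap A_i$ iff $\sigma(a)\subseteq A_i$ for all $i$, so $\rho$ preserves arbitrary meets, again using atomicity of $M$: an atom lies below $\bigwedge\rho(A_i)$ iff it lies below each $\rho(A_i)$. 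Preservation of joins then follows by De Morgan, and $\rho(\varnothing)=0$ because atoms $\sigma(a)$ are nonzero. Thus $\rho$ is a complete boolean homomorphism.

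\emph{Values on basic elements.} For $b\in\B$ we must show, for every atom $a$ of $M$, that $a\le b$ iff $\sigma(a)\subseteq\sigma(b)$. The forward direction is \cref{lem:sigma-order-preserving}. For the converse, suppose $a\nleq b$, so $a\le\neg b$. Then \cref{lem:sigma-order-preserving} gives $\sigma(a)\subseteq\sigma(\neg b)=\sigma(b)^*$ by \cref{lem:sigma-neg}. Since $\sigma(a)\ne\varnothing$, it is not contained in $\sigma(b)$. Hence $\rho(\sigma(b))=\bigvee\{a\in\at(M)\mid a\le b\}=b$ by atomicity. The identity $\rho(\sigma(\neg b))=\neg b$ then follows from $\sigma(\neg b)=\sigma(b)^*$ together with the preservation of complements established above.

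The only delicate point is the dichotomy $\sigma(a)\subseteq A$ or $\sigma(a)\subseteq A^*$, and it relies entirely on \cref{lem:sigma-atoms} together with the fact that meets in $\RO(X_{\B})$ are intersections. Everything else is bookkeeping with atoms, so I expect no serious obstacle.
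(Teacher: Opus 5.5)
Your proposal is correct and follows essentially the paper's route: you check preservation of arbitrary meets and of complements atom by atom, using that $\sigma(a)$ is an atom of $\RO(X_{\B})$ and that $M$ is atomic; you then show $a\le b$ iff $\sigma(a)\subseteq\sigma(b)$ for atoms $a$, and get the $\neg b$ case from $\sigma(\neg b)=\sigma(b)^*$. The one local difference is the step showing $\sigma(a)\nsubseteq\sigma(b)$ when $a\wedge b=0$: you argue via $\sigma(a)\subseteq\sigma(\neg b)=\sigma(b)^*$ and $\sigma(a)\neq\varnothing$, whereas the paper uses condition (B3) to exhibit the filter $x_a\in\sigma(a)\setminus\sigma(b)$; both arguments are valid.
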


\begin{proof}
    Write $\sigma=\sigma_{\B}$ and $\rho=\rho_{\B}$. For
    $a\in\at(M)$ and $A\in\RO(X_{\B})$, it is clear that $a\leq\rho(A)$ iff $\sigma(a)\subseteq A$.
    By \cref{lem:sigma-atoms}, $\sigma(a)$ is an atom of
    $\RO(X_{\B})$. Hence, for $\{A_i\}_{i\in I}\subseteq\RO(X_{\B})$,
    \[
        a\leq\rho\left(\bigcap A_i\right)
        \iff
        a\leq\bigwedge \rho(A_i),
    \]
    and, for $A\in\RO(X_{\B})$,
    \[
        a\leq\rho(A^*)
        \iff
        a\leq\neg\rho(A).
    \]
    Since $M$ is spatial, it follows that $\rho(\bigwedge A_i) = \bigwedge_{i\in I}\rho(A_i)$
        and  $\rho(A^*)=\neg\rho(A)$.
    Thus, $\rho$ is a complete boolean homomorphism.

    Let $b\in\B$ and $a\in\at(M)$. If $a\leq b$, then
    $\sigma(a)\subseteq\sigma(b)$ by monotonicity. Conversely, suppose
    $a\nleq b$. Since $a$ is an atom, $a\wedge b=0$. As in the proof of
    \cref{lem:sigma-atoms}, there is $c\in\B$ such that
    $a\leq c\leq\neg b$. Hence $c\in x_a\in\sigma(a)$, whereas
    $x_a\notin\sigma(b)$. Thus $\sigma(a)\nsubseteq\sigma(b)$.
    Therefore, $a\leq\rho(\sigma(b))$ iff
        $a\leq b$.
    Since $M$ is spatial, $\rho(\sigma(b))=b$. Finally, by
    \cref{lem:sigma-neg}, $\rho(\sigma(\neg b))
        =
        \rho(\sigma(b)^*)
        =
        \neg\rho(\sigma(b))
        =
        \neg b$.
\end{proof}

\begin{theorem}[Choice-free Wallman compactification]
    \label{thm:choice-free-wallman}
    Let $M$ be a spatial $T_1$-algebra and let $\B$ be a Wallman basis of
    $M$. Then $(\RO(X_{\B}),\rho_{\B})$ is a $T_1$-compactification of
    $M$.
\end{theorem}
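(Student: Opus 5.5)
By \cref{thm:WM-compact}, $\RO(X_{\B})$ is compact and $T_1$. It therefore remains to show three things about $\rho=\rho_{\B}$: it is an MT-morphism, it is an MT-embedding, and it is dense. By \cref{lem:rho-wallman-basis}, $\rho$ is a complete boolean homomorphism. It is onto because $M$ is spatial: for $a\in M$, the join $\bigvee\{\sigma(x)\mid x\in\at(M),\ x\leq a\}$ is sent by $\rho$ to $\bigvee\{x\in\at(M)\mid x\le a\}=a$, since $\rho(\sigma(x))=x$ for every atom $x$. This last equality holds because $\sigma(x)$ is an atom by \cref{lem:sigma-atoms}, and, using \ref{cond-3} as in that proof, distinct atoms $x\neq x'$ of $M$ give distinct atoms $\sigma(x)\ne\sigma(x')$.

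\textbf{MT-morphism.} I would show $\rho(\square A)\le\square\rho(A)$, or equivalently $\diamond\rho(A)\le\rho(\diamond A)$. Closed elements of $\RO(X_{\B})$ are meets $\bigcap\sigma[S]$ with $S\subseteq\B$. Since $\rho$ preserves arbitrary meets and $\rho(\sigma(b))=b$, we get $\rho(\bigcap\sigma[S])=\bigwedge S$, which is closed in $M$. So $\rho$ maps closed elements to closed elements. Now $\diamond A$ is closed and contains $A$, so $\rho(\diamond A)$ is a closed element above $\rho(A)$, and hence above $\diamond\rho(A)$.

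\textbf{Onto on opens.} Take $u\in\O(M)$. By \ref{cond-3}, $u=\bigvee\{b\in\B\mid b\le u\}$. However, $\sigma(b)$ is closed rather than open, so this decomposition does not directly give an open preimage. Instead I would use that $\neg u$ is closed, so by \ref{cond-1} we have $\neg u=\bigwedge S$ with $S=\{b\in\B\mid \neg u\le b\}$. Then $u=\bigvee\{\neg b\mid b\in S\}$. Put $V=\bigvee\{\sigma(\neg b)\mid b\in S\}$; this is open, being a join of $\sigma$'s of basic opens. Since $\rho$ preserves joins and $\rho(\sigma(\neg b))=\neg b$ by \cref{lem:rho-wallman-basis}, we get $\rho(V)=u$.

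\textbf{Density.} Let $V$ be a nonzero open element of $\RO(X_{\B})$. Then $V$ contains some nonzero $\sigma(\neg b)$ with $b\in\B$, and by \cref{lem:injective-basics} we have $\neg b\neq 0$. Therefore $\rho(V)\ge\rho(\sigma(\neg b))=\neg b\ne0$. (Nonzero-ness of $\sigma(\neg b)$ is equivalent to $\sigma(\neg b)\neq\sigma(0)$, which holds by injectivity on basics.)

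I expect no serious obstacle. The most delicate step is the opens-surjectivity, where one must use \ref{cond-1} rather than \ref{cond-3} to obtain a preimage that is genuinely open.
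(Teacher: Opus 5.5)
Your proof is correct, and apart from one step it is the paper's argument. Compactness and $T_1$ from \cref{thm:WM-compact}, the MT-morphism property via ``$\rho$ sends closed elements to closed elements'', surjectivity on opens by applying \ref{cond-1} to $\neg u$ and complementing, and density via the basic opens $\sigma(\neg b)$ all coincide with what the paper does.

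The difference is in showing that $\rho$ is onto.

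\textbf{The paper's route.} For $c\in\C(M)$, \ref{cond-1} and $\rho(\sigma(b))=b$ give $c=\rho\bigl(\bigcap\{\sigma(b)\mid b\in\B,\ c\le b\}\bigr)$. Since closed elements join-generate a $T_1$-algebra, $\rho$ is onto. The same identity with $c=\neg u$ also handles the opens, so one computation does both jobs.

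\textbf{Your route.} You use that atoms join-generate $M$ and that $\rho(\sigma(x))=x$ for each atom $x$. This needs the extra fact that $\sigma$ is injective on atoms. That fact holds, but your appeal to ``\ref{cond-3} as in that proof'' silently uses that every atom $x$ of a $T_1$-algebra is closed: some nonzero closed element lies below $x$, hence equals $x$. So $\neg x$ is open, and \ref{cond-3} gives $c\in\B$ with $x'\le c\le\neg x$. Then the filter $\{b\in\B\mid x'\le b\}$ lies in $\sigma(x')$ but not in $\sigma(x)$. You should state this closedness step explicitly.

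Once $x$ is known to be closed, there is a shorter route that needs no injectivity argument: $x\le\rho(\sigma(x))\le\bigwedge\{b\in\B\mid x\le b\}=x$. This follows from the definition of $\rho$, monotonicity, \cref{lem:rho-wallman-basis}, and \ref{cond-1}.

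\textbf{Comparison.} Your route does produce an explicit section $a\mapsto\bigvee\{\sigma(x)\mid x\in\at(M),\ x\le a\}$ of $\rho$. For the theorem, however, it is a detour. You already derive the identity $\rho(\bigcap\sigma[S])=\bigwedge S$ in the MT-morphism step. Combined with \ref{cond-1} and the $T_1$ property, that identity gives surjectivity immediately, exactly as in the paper.
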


\begin{proof}
 Set $N=\RO(X_{\B})$, $\sigma=\sigma_{\B}$, and $\rho=\rho_{\B}$.
    By \cref{thm:WM-compact}, $N$ is compact and $T_1$, and by
    \cref{lem:rho-wallman-basis}, $\rho$ is a complete boolean
    homomorphism.

    Since the closed elements of $N$ are meet-generated by the elements
    $\sigma(b)$, with $b\in\B$, and
    $\rho(\sigma(b))=b\in \B$, the map $\rho$ sends closed elements to
    closed elements. Hence, for every $A\in N$,
    $\diamond\rho(A)\leq\rho(\diamond A)$,
    so $\rho$ is an MT-morphism.

    Let $c\in\C(M)$. Then $c=\bigwedge\{b\in\B\mid c\leq b\}$, 
    so 
        ${c
        =
        \rho(
            \bigcap\{\sigma(b)\mid b\in\B,\ c\leq b\})}$.
    Since $M$ is $T_1$, its closed elements join-generate $M$, so $\rho$
    is onto. If $u\in\O(M)$, apply the preceding identity to
    $c=\neg u$. Taking complements gives an open element of $N$ mapped
    to $u$. Thus, $\O(\rho)$ is onto.

    Finally, let $U\in\O(N)$ and suppose $\rho(U)=0$. Since the elements
    $\sigma(\neg b)$, with $b\in\B$, join-generate $\O(N)$,
    \[
        U
        =
        \bigvee\{\sigma(\neg b)\mid b\in\B,\
        \sigma(\neg b)\subseteq U\}.
    \]
    For each $b\in\B$ with $\sigma(\neg b)\subseteq U$,
    \[
        \neg b
        =
        \rho(\sigma(\neg b))
        \leq
        \rho(U)
        =
        0.
    \]
    Hence $\sigma(\neg b)=0$, and therefore $U=0$. Thus, $\rho$ is
    dense.
\end{proof}

Taking $\B=\C(M)$ yields a compact $T_1$-algebra which we denote by $\w M=\RO(X_{\C(M)})$ and call the \emph{Wallman extension} of $M$.

\begin{corollary}
    If $M$ is a spatial $T_1$-algebra, then $(\w M, \rho_{\C(M)})$ is a $T_1$-compactification of $M$.
\end{corollary}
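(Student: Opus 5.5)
The plan is to obtain the corollary as the special case $\B=\C(M)$ of \cref{thm:choice-free-wallman}. Once we know that $\C(M)$ is a Wallman basis of $M$ whenever $M$ is a $T_1$-algebra, that theorem gives directly that $(\RO(X_{\C(M)}),\rho_{\C(M)}) = (\w M,\rho_{\C(M)})$ is a $T_1$-compactification of $M$. So the whole argument reduces to checking conditions \ref{cond-1}--\ref{cond-3} for $\B=\C(M)$.

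Conditions \ref{cond-1} and \ref{cond-2} are immediate.
\begin{itemize}
\item For \ref{cond-1}: given $c\in\C(M)$, $c$ itself belongs to $\{b\in\C(M)\mid c\leq b\}$. So the meet of that set is at most $c$, and trivially at least $c$.
\item For \ref{cond-2}: $\C(M)$ is closed under finite joins and finite meets. Indeed, $\diamond$ preserves finite joins, dually to $\square$ preserving finite meets, and complements of finite joins of opens are meets of closed elements. It also contains $0=\neg 1$ and $1=\neg 0$, since $\square 1=1$ and $\square 0\leq 0$. Hence $\C(M)$ is a bounded sublattice of itself.
\end{itemize}

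The only step with content is \ref{cond-3}, and this is where the $T_1$ hypothesis enters. Let $u\in\O(M)$. Since $M$ is $T_1$, the closed elements join-generate $M$, so $u=\bigvee S$ for some $S\subseteq\C(M)$. Every $c\in S$ satisfies $c\leq u$, so
\[
u=\bigvee S\leq\bigvee\{b\in\C(M)\mid b\leq u\}\leq u,
\]
which is exactly \ref{cond-3}.

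With $\C(M)$ shown to be a Wallman basis, and $M$ spatial and $T_1$ by hypothesis, \cref{thm:choice-free-wallman} applies verbatim and yields the corollary. There is no real obstacle: the only point to watch is that $T_1$-ness is used precisely for \ref{cond-3}, while spatiality is used only through the hypotheses of \cref{thm:choice-free-wallman}.
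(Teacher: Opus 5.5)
Your proposal is correct and follows the paper's route: the corollary is \cref{thm:choice-free-wallman} specialised to $\B=\C(M)$. The paper likewise uses, without separate proof, that $\C(M)$ is a Wallman basis of any $T_1$-algebra, with \ref{cond-1} and \ref{cond-2} immediate and \ref{cond-3} coming from join-generation by closed elements, exactly as you verify.
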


To relate the Wallman extension of MT-algebras to the classical construction, let us return to $\RO(X_{\B})$, where $\B$ is a bounded sublattice of $\C(M)$. Assuming choice, this algebra admits the usual description in terms of maximal filters. For a poset $X$, let $\max X$ denote its set of maximal elements. Thus, $\max X_{\B}$ is the set of maximal proper filters of $\B$. We equip $\max X_{\B}$ with the topology having $\{\max \sigma(b) \mid b \in \B\}$ as a closed base.

\begin{*proposition}\label{prop:RO-max}
$\RO(X_{\B}) \cong \P(\max X_{\B})$.
\end{*proposition}

\begin{proof}
We claim the map $\max : \RO(X_{\B}) \to \P(\max X_{\B})$, given by $U \mapsto \max U$, is an isomorphism of MT-algebras. By Zorn's Lemma, every proper filter of $\B$ is contained in a maximal proper filter. Thus, for each $U \in \RO(X_{\B})$ and $x \in X_{\B}$ we have $x \in U$  iff $\max(\upset x) \subseteq U$. Indeed, the left-to-right implication follows since $U$ is an upset. Conversely, if $\max(\upset x) \subseteq U$ and $y \geq x$, then $y$ is contained in some maximal proper filter $z \geq x$. Thus, $z \in U$, so $y \in \downset U$. Hence, $\upset x \subseteq \downset U$, and therefore $x \in U^{**}=U$. It follows that $\max$ is injective. 

To see that it is onto, for $S \subseteq \max X_{\B}$, put $U_S=S^{**}$.
Then $U_S$ is an upset and 
\[
U_S = \{x \in X_{\B} \mid \upset x \subseteq \downset S \} = \{x \in X_{\B} \mid \max(\upset x) \subseteq S\},
\]
so $\max U_S=S$. Therefore, $\max$ is onto, and hence, since it is order-preserving, it is an order-isomorphism. Thus, $\max$ is a complete boolean isomorphism. Moreover, for each $b \in \B$, we have $\max\sigma(b)=\{x \in \max X_{\B} \mid b \in x\}$. Indeed, if $b \in x$, then $x \in \sigma(b)$ by \cref{lem:inside-lemma}. Conversely, if $b \notin x$, then maximality of $x$ implies that there is $c \in x$ such that $b \wedge c=0$, so $x \notin \sigma(b)$. Since the closed elements of $\RO(X_{\B})$ are meet-generated by the elements $\sigma(b)$, and the closed elements of $\P(\max X_{\B})$ are meet-generated by the sets $\max\sigma(b)$, it follows that $\max$ is an isomorphism.
\end{proof}

We now relate the Wallman extension of MT-algebras to the classical Wallman extension of spaces. Recall that, for a $T_1$-space $X$, the Wallman extension $\w X$ is the space $\max X_{\C(\P(X))}$ (see \cref{rem:wallman}) and the pair $(\w X, e)$ is the \emph{Wallman compactification} of $X$, where $e : X \to \w X$ is defined by $e(x) = \{C \in \C(\P(X)) \mid x \in C\}$.

\begin{*theorem} \label{thm:wallman-spc}
Let $X$ be a $T_1$-space. Then $\P(\w X) \cong \w\P(X)$. Consequently, $\w X \cong \at(\w\P(X))$.
\end{*theorem}

\begin{proof}
By definition, $\w\P(X)=\RO(X_{\C(\P(X))})$. Hence, by \cref{prop:RO-max},
$\w\P(X)\cong\P(\max X_{\C(\P(X))})=\P(\w X)$ as MT-algebras. Applying $\at$ yields $\at(\w\P(X))\cong\w X$.
\end{proof}

Recall from \cref{thm:frm-wallman} that, for a subfit frame $L$, the Wallman compactification is
\[
    \nu : \w L=\J(L)_s \longrightarrow L,
    \qquad
    \nu(I)=\bigvee I.
\]
We now show that the Wallman extension of a $T_1$-algebra recovers this construction on its frame of opens.

\begin{theorem}\label{prop:subfit-wallmann}
Let $M$ be a $T_1$-algebra. Define $\rho : \O(\w M) \to \O(M)$ by
\[
    \rho(U)=\bigvee\{u\in\O(M)\mid \sigma(u)\subseteq U\}.
\]
Then there is a frame isomorphism $\alpha : \O(\w M) \to \w \O(M)$
such that the following diagram commutes:
\[
    \begin{tikzcd}
    \O(\w M) \ar[r, "\alpha", "\cong"'] \ar[rd, "\rho"', bend right=.5cm]
        & \w \O(M) \ar[d, "\nu"] \\
        & \O(M).
    \end{tikzcd}
\]
In particular, $(\O(\w M),\rho)$ is the Wallman compactification of $\O(M)$.
\end{theorem}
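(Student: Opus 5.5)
Note that $M$ here is only $T_1$, not spatial, so the plan must avoid atoms and use only $\sigma=\sigma_{\C(M)}$ and the frame structure. Write $L=\O(M)$ and $\B=\C(M)$. The approach is to identify both $\O(\w M)$ and $\w L=\J(L)_s$ with the same combinatorial object: a lattice of ideals of $L$ determined by basic opens. Each $U\in\O(\w M)$ is the join in $\RO(X_\B)$ of the basic opens $\sigma(u)$, $u\in L$, contained in it. So I would define
\[
\alpha(U)=\{u\in L\mid \sigma(u)\subseteq U\},
\]
and, in the other direction, $\beta(I)=\bigvee\{\sigma(u)\mid u\in I\}$.

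First I check that $\alpha(U)$ is an ideal. It is a downset by \cref{lem:sigma-order-preserving}, and it is closed under finite joins because $\sigma(u\vee v)=\sigma(u)\vee\sigma(v)$ by \cref{lem:opens-RO-joins}. Next I show that $\alpha(U)$ is suited. Here I use the translation of smallness: for $u,v\in L$, $u$ is $v$-small iff $\sigma(u)\subseteq\sigma(v)$. To prove this, note that $u\vee w=1$ with $w=\neg c$ means $\neg u\wedge c\le\dots$. Concretely, $u\vee w=1$ iff $\neg u\le w$, i.e. $c\le \neg u$ where $c=\neg w$. So $u$ is $v$-small iff every closed $c$ with $c\wedge\neg u... $. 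Rather than unwind this in the abstract, I would prove it directly via \cref{lem:sigma-def} and the description $\sigma(u)=\sigma(\neg u)^*$ from \cref{lem:sigma-neg}.

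With this translation in hand, $\alpha(U)$ being suited amounts to showing: if $\sigma(v)\subseteq U$, then $v=\bigvee\{u\mid \sigma(u)\subseteq\sigma(v)\}$. This holds trivially, since $u=v$ is in the set. More care is needed to show that $\alpha(U)$ is a suited ideal in the sense of $\J(L)_s$, where smallness is taken between ideals. I would instead show that $\J(L)_s$ consists exactly of the ideals $I$ such that, whenever $\sigma(v)\subseteq\bigvee_{u\in I}\sigma(u)$, we have $v\in I$. For this I use compactness of $\w M$ (\cref{thm:WM-compact}) together with the fact that $\sigma$ is injective on basic opens (\cref{lem:injective-basics}).

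The key technical lemma, and the main obstacle, is the following: for an ideal $I$ of $L$ and $v\in L$,
\[
\sigma(v)\subseteq\bigvee_{u\in I}\sigma(u)
\quad\iff\quad
\text{for all } w\in L,\ \bigl(\forall u\in I:\ u\vee w\neq1\bigr)\Rightarrow v\vee w\neq 1,
\]
that is, $\downset v$ is small relative to $I$. The plan is to pass to complements. Since $\sigma(\neg u)=\sigma(u)^*$ by \cref{lem:sigma-neg}, the left side says $\bigcap_{u\in I}\sigma(\neg u)\cap\sigma(\neg v)^{*}=\varnothing$. By compactness in meet form (\cref{compactness-meets} applied to $\w M$) and by directedness of $I$, this reduces to finite instances $\sigma(\neg u)\wedge\sigma(v)\ldots$. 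These can in turn be decided by the proper-filter points $\<\neg u,\neg w\>$ using \cref{lem:sigma-def}. I expect this equivalence to take most of the work.

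Once the lemma is established, $\alpha$ and $\beta$ are mutually inverse monotone maps between $\O(\w M)$ and $\J(L)_s$, and hence a frame isomorphism. Finally, the triangle commutes because $\nu(\alpha(U))=\bigvee\{u\mid\sigma(u)\subseteq U\}=\rho(U)$ by the definition of $\rho$. The last sentence of the theorem then follows from \cref{thm:frm-wallman}, since $L$ is subfit because $M$ is $T_1$.
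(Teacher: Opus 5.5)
Your reduction is sound and is essentially the paper's proof reorganized. You use the same $\alpha$ and $\beta$. Your key lemma, together with the routine fact that an ideal $I$ is suited iff it contains every $v$ for which $\downset v$ is $I$-small, identifies $\J(L)_s$ with the ideals satisfying $\sigma(v)\subseteq\beta(I)\Rightarrow v\in I$. Its direction ``covering $\Rightarrow$ small'' is the paper's proof that $\alpha\beta(I)\subseteq I$. Its direction ``small $\Rightarrow$ covering'' is the paper's proof that $\alpha(U)$ is suited. (The element-level translation of smallness you begin with is vacuous: in a subfit frame, $u$ is $v$-small iff $u\le v$.) The gap is in your sketch of the key lemma, which carries all the content.

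In complement form the left side says $\sigma(v)\cap\bigcap_{u\in I}\sigma(\neg u)=\varnothing$. But $\sigma(v)$ is open and in general not compact, so \cref{compactness-meets} does not reduce this to finitely many $u$. It cannot: a single $u\in I$ with $\sigma(v)\cap\sigma(\neg u)=\varnothing$ gives $v\le u$ by \cref{lem:injective-basics}, hence $v\in I$. Your lemma would then make every ideal suited. This fails already in $\Omega([0,1])$ with $I$ the opens whose closure misses $0$ and $v=(0,1]$: every open $w\ni 0$ contains some $[0,\varepsilon)$, and $(\varepsilon/2,1]\in I$, yet $v\notin I$.

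For ``covering $\Rightarrow$ small'', fix $w$ with $v\vee w=1$ and use the closed, hence compact, element $\sigma(\neg w)\subseteq\sigma(v)$. It is disjoint from $\bigcap_{u\in I}\sigma(\neg u)$. So compactness, directedness of $I$ and \cref{lem:closed-RO-meets} give one $u\in I$ with $\sigma(\neg w)\cap\sigma(\neg u)=\varnothing$, and the point $\langle\neg w\wedge\neg u\rangle$ (\cref{lem:inside-lemma}) forces $u\vee w=1$.

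For ``small $\Rightarrow$ covering'', compactness is of no help. You must recover the infinite join from the per-$w$ information, i.e.\ show that $\sigma(v)$ is the join of the $\sigma(c)$ with $c\in\C(M)$ and $c\le v$. This does not follow from preservation of finite joins, and it is the missing idea. In your complement form it is short via \cref{lem:sigma-def}. Let $x\in\bigcap_{u\in I}\sigma(\neg u)$ and $b\in\C(M)$ with $b\le v$. Smallness applied to $w=\neg b$ gives $u\in I$ with $b\wedge\neg u=0$. Then $x\in\sigma(\neg u)$ gives $c\in x$ with $c\wedge b=0$, so $x\in\sigma(\neg v)$. The paper reaches the same conclusion through the regular-open argument: for $x\in\sigma(v)$ and $y\geq x$ one has $y\notin\sigma(\neg v)$, which yields $z\geq y$ containing some $c\le v$.
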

\begin{proof}
Since $M$ is $T_1$, the frame $\O(M)$ is subfit and $\C(M)$ is a Wallman basis. Define
\[
    \alpha : \O(\w M) \to \w \O(M),
    \qquad
    \alpha(U)=\{u \in \O(M) \mid \sigma(u)\subseteq U\},
\]
and
\[
    \beta : \w \O(M) \to \O(\w M),
    \qquad
    \beta(I)=\bigvee\{\sigma(u) \mid u \in I\}.
\]
Since every open element of $M$ is $\C(M)$-basic open, $\beta$ is well defined. By \cref{lem:opens-RO-joins}, $\alpha(U)$ is an ideal of $\O(M)$.

We show that $\alpha(U)$ is suited. Let $I$ be an $\alpha(U)$-small ideal and let $v\in I$. Suppose $x\in\sigma(v)$ and let $y\geq x$. Then $y\in\sigma(v)$, so $y\notin\sigma(\neg v)$ by \cref{lem:sigma-neg}. Hence, there exist $z\geq y$ and $c\in z$ such that $c\wedge\neg v=0$. Thus, $v\vee\neg c=1$. Let $J$ be the ideal generated by $\neg c$. Since $v\in I$, we have $I\vee J=\O(M)$, and therefore $\alpha(U)\vee J=\O(M)$. Hence, there exists $u\in\alpha(U)$ such that $u\vee\neg c=1$, so $c\leq u$. By \cref{lem:inside-lemma}, $z\in\sigma(u)\subseteq U$. Therefore, $y\in\downset U$. Since this holds for each $y\geq x$, we have $x\in U^{**}=U$. Thus, $\sigma(v)\subseteq U$, so $v\in\alpha(U)$. Hence, every $\alpha(U)$-small ideal is contained in $\alpha(U)$, and therefore $\alpha(U)$ is suited.

It is immediate that $\alpha$ and $\beta$ are order-preserving. If $U\in\O(\w M)$, then $\beta(\alpha(U))=U$, since the open elements of $\w M$ are joins of elements $\sigma(u)$ with $u\in\O(M)$. Now let $I\in\w \O(M)$. Clearly, $I\subseteq\alpha(\beta(I))$. Conversely, suppose $u\in\alpha(\beta(I))$, and let $J$ be the principal ideal generated by $u$. We show that $J$ is $I$-small. Suppose $J\vee K=\O(M)$. Then $u\vee v=1$ for some $v\in K$, so $\neg v\leq u$ and hence
$\sigma(\neg v)\subseteq\sigma(u)\subseteq\beta(I)$.
Since $\sigma(\neg v)$ is closed and $\w M$ is compact, there exist $u_1,\dots,u_n\in I$ such that $\sigma(\neg v)\subseteq\sigma(u_1\vee\cdots\vee u_n)$. By \cref{lem:injective-basics}, $\neg v\leq u_1\vee\cdots\vee u_n$, so $u_1\vee\cdots\vee u_n\vee v=1$. Thus, $I\vee K=\O(M)$, and hence $J$ is $I$-small. Since $I$ is suited, $u\in I$. Therefore, $\alpha(\beta(I))=I$, and $\alpha$ is an isomorphism with inverse $\beta$.

Finally, for each $U\in\O(\w M)$,
\[
    (\nu\circ\alpha)(U)
    = \bigvee\{u\in\O(M)\mid \sigma(u)\subseteq U\}
    = \rho(U).
\]
Therefore, $\nu\circ\alpha=\rho$, and the displayed diagram commutes. Since $\alpha$ is an isomorphism and $\nu:\w \O(M)\to\O(M)$ is the Wallman compactification of $\O(M)$, so is $\rho:\O(\w M)\to\O(M)$.
\end{proof}

\begin{corollary}
Let $L$ be a subfit frame. Then there exists a $T_1$-algebra $M$ such that the Wallman compactification of $L$ is $(\O(\w M),\rho)$.
\end{corollary}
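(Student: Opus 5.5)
The statement is the last corollary: for a subfit frame $L$ there is a $T_1$-algebra $M$ such that the Wallman compactification of $L$ is $(\O(\w M),\rho)$. We take $M=\F L$, the Funayama envelope of $L$. By \cref{prop:subfit-iff-t1}, since $L$ is subfit, $\F L$ is a $T_1$-algebra. By the construction of the Funayama envelope, there is a frame isomorphism $\varphi\colon \O(\F L)\to L$.

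Next we apply \cref{prop:subfit-wallmann} to $M=\F L$. This gives a frame isomorphism $\alpha\colon \O(\w M)\to \w\O(M)$ with $\nu_{\O(M)}\circ\alpha=\rho$. So $(\O(\w M),\rho)$ is the Wallman compactification of $\O(M)$.

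It remains to transport along $\varphi$. The Wallman construction $\J(-)_s$ is defined purely order-theoretically: ideals, and suitedness via the $v$-small relation, which only uses joins and the top element. Hence $\varphi$ induces a frame isomorphism $\J(\varphi)_s\colon \w\O(M)\to \w L$, sending an ideal $I$ to $\varphi[I]$. This map satisfies $\nu_L\circ \J(\varphi)_s=\varphi\circ\nu_{\O(M)}$, because $\varphi$ preserves joins. Composing gives the isomorphism $\J(\varphi)_s\circ\alpha\colon\O(\w M)\to\w L$, under which $\nu_L$ corresponds to $\varphi\circ\rho$.

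Therefore, after identifying $\O(M)$ with $L$ via $\varphi$, the pair $(\O(\w M),\rho)$ is isomorphic, as a compactification, to $(\w L,\nu_L)$.

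The only point needing care is this naturality of the Wallman extension under frame isomorphisms, which is routine. No spatiality assumption is needed, since \cref{prop:subfit-wallmann} only requires $M$ to be $T_1$. Note that $\w M$ itself need not be a compactification of $M$ in the MT sense here, because $\F L$ may be non-spatial. The corollary concerns only the frame-level statement, so this causes no problem.
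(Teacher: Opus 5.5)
Your proof is correct and matches the paper's: take $M=\F L$, get that it is $T_1$ from \cref{prop:subfit-iff-t1}, and apply \cref{prop:subfit-wallmann}. Your explicit transport of $(\w\O(M),\nu)$ along the isomorphism $\O(\F L)\cong L$ spells out the identification that the paper leaves implicit.
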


\begin{proof}
Take $M=\F L$. Then $\O(M)\cong L$, and $M$ is $T_1$ by \cref{prop:subfit-iff-t1}. The result follows from \cref{prop:subfit-wallmann}.
\end{proof}

\begin{remark}
    Let $M$ be a $T_1$-algebra. Then, since $\w M$ is $T_1$, \cref{lem:T1-iso,prop:subfit-wallmann} yield
    \[
        \w M \cong \F\O(\w M) \cong \F(\w\O(M)).
    \]
    Thus, the Wallman extension of a $T_1$-algebra may equivalently be obtained by taking the Funayama envelope of the Wallman compactification of its frame of opens.
\end{remark}

\subsection{The Stone-\v{C}ech extension of an MT-algebra}

In this subsection, we use the Wallman bases for MT-algebras to obtain an analogue of the Stone--\v{C}ech extension. This requires the following definition.

\begin{definition}
Let $M$ be an MT-algebra and let $a \in M$.
\begin{enumerate}
    \item We call $a$ a \emph{zero-element} if there exists a sequence $(c_n)_{n\in\mathbb N}$ in $\C(M)$ such that $a=\bigwedge_{n\in\mathbb N}c_n$ and $c_{n+1}\pprec c_n$ for each $n\in\mathbb N$.
    \item We call $a$ a \emph{cozero-element} if $\neg a$ is a zero-element.
\end{enumerate}
We write $\Z(M)$ and $\Coz(M)$ for the sets of zero-elements and cozero-elements of $M$, respectively.
\end{definition}

Cozero-elements are open. Moreover, since $a \pprec b$ implies $\neg b \pprec \neg a$, an element $u \in M$ is a cozero-element iff there exists a sequence $(u_n)_{n\in\mathbb N}$ in $\O(M)$ such that $u=\bigvee_{n\in\mathbb N}u_n$ and $u_n\pprec u_{n+1}$ for each $n\in\mathbb N$. Thus, the cozero-elements of $M$ are precisely the cozero-elements of the frame $\O(M)$ in the usual frame-theoretic sense; see \cite[p.~286]{PP12}. Thus, $\Z(M)$ is a bounded sublattice of $\C(M)$ since $\Coz(M)$ is a bounded sublattice of $\O(M)$. 

\begin{lemma} \label{zero-between}
    Let $M$ be an MT-algebra. If $a \pprec b$, then there exists $z \in \Z(M)$ such that $a \leq z \leq b$.
\end{lemma}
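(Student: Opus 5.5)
Given $a \pprec b$, I would fix a witnessing family $\{c_p \mid p\in[0,1]\cap\mathbb{Q}\}$ consisting entirely of closed elements, which the paper allows. Thus $a\leq c_0$, $c_1\leq b$, and $c_p\prec c_q$ whenever $p<q$. The idea is to take $z$ to be a countable meet of closed members of this chain, indexed so as to decrease toward $c_0$. Concretely, set $d_n := c_{2^{-n}}$ for $n\in\mathbb N$, so that $d_0=c_1$, and put
\[
z := \bigwedge_{n\in\mathbb N} d_n .
\]
Each $d_n$ is closed. Since $a\leq c_0\leq c_{2^{-n}}$, we have $a\leq z$. Since $z\leq d_0=c_1\leq b$, we have $z\leq b$. (Here $c_p\leq c_q$ for $p<q$ because $c_p\leq\diamond c_p\leq\square c_q\leq c_q$.)

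It remains to verify that $d_{n+1}\pprec d_n$ for every $n$, which is what the definition of a zero-element requires. This is a rescaling of the original family. Given $n$, set $r=2^{-(n+1)}$ and $s=2^{-n}$, and define, for $p\in[0,1]\cap\mathbb{Q}$,
\[
e_p := c_{r+p(s-r)}.
\]
The index $r+p(s-r)$ is rational and lies in $[r,s]\subseteq[0,1]$. Then $d_{n+1}=c_r=e_0$ and $e_1=c_s=d_n$. Moreover, $p<q$ implies $r+p(s-r)<r+q(s-r)$, so $e_p\prec e_q$. Hence $\{e_p\}$ witnesses $d_{n+1}\pprec d_n$, and $z\in\Z(M)$.

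I do not expect a genuine obstacle. The only points to check are the following.
\begin{enumerate}
\item The witnessing family may be taken closed. The paper asserts this; if needed, one can replace $c_p$ by $\diamond c_p$, which preserves $c_p\prec c_q$ since $\diamond\diamond c_p=\diamond c_p\leq \square c_q\leq\square\diamond c_q$. At the top end one still has $\diamond c_1\leq \square b$ only if $1$ is not the top index, so I would instead shrink the interval, using indices in $[0,\tfrac12]$ and setting $d_0=c_{1/2}$, to keep $\diamond c_{1/2}\leq\square c_1\leq b$.
\item The affine reindexing preserves rationality of the indices.
\end{enumerate}
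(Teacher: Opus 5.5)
Your proof is correct and is essentially the paper's argument. The paper also takes a closed witnessing family and sets $z=\bigwedge_{n\geq 1}c_{2^{-n}}$; it simply asserts $c_p\pprec c_q$ for $p<q$, which your affine reindexing $e_p=c_{r+p(s-r)}$ makes explicit.
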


\begin{proof}
    If $a \pprec b$, then there is a family $\{c_p\mid p\in[0,1]\cap\mathbb Q\}\subseteq\C(M)$ such that $a\leq c_0$, $c_1\leq b$, and $c_p\pprec c_q$ whenever $p<q$. Then $z=\bigwedge_{n\geq 1}c_{2^{-n}}$ is a zero-element and $a\leq z\leq b$.
\end{proof}

\begin{lemma}\label{lem:zero-wallman-basis}
Let $M$ be a $T_1$-algebra. Then $M$ is completely regular iff $\Z(M)$ is a Wallman basis.
\end{lemma}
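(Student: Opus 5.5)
We prove the two implications separately. In both directions, (B2) for $\Z(M)$ is already available: the paper notes that $\Z(M)$ is a bounded sublattice of $\C(M)$ because $\Coz(M)$ is a bounded sublattice of $\O(M)$. So the work lies in relating (B1) and (B3) to complete regularity.

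Suppose first that $M$ is completely regular, and start with (B3). Let $u \in \O(M)$. By complete regularity, $u = \bigvee\{v \in \O(M) \mid v \pprec u\}$. For each such $v$, \cref{zero-between} gives a $z_v \in \Z(M)$ with $v \leq z_v \leq u$. Hence
\[
u = \bigvee_v v \leq \bigvee_v z_v \leq \bigvee\{z \in \Z(M) \mid z \leq u\} \leq u,
\]
which is (B3).

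Next, (B1). Let $c \in \C(M)$ and put $c' = \bigwedge\{z \in \Z(M) \mid c \leq z\}$; clearly $c \leq c'$. For the reverse inequality it suffices to show $c' \wedge \neg c = 0$. The element $u = \neg c$ is open, so complete regularity gives $u = \bigvee\{v \in \O(M) \mid v \pprec u\}$. Take $v \pprec u$. Since $a \pprec b$ implies $\neg b \pprec \neg a$, we get $c = \neg u \pprec \neg v$. Applying \cref{zero-between} yields $z \in \Z(M)$ with $c \leq z \leq \neg v$. Then $c' \leq z \leq \neg v$, so $c' \wedge v = 0$. This holds for every such $v$, hence $c' \wedge u = 0$ and therefore $c' \leq c$. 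So $\Z(M)$ is a Wallman basis.

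For the converse, suppose $\Z(M)$ is a Wallman basis; recall that $M$ is $T_1$ by hypothesis. Let $u \in \O(M)$. By (B3), $u = \bigvee\{z \in \Z(M) \mid z \leq u\}$. We show that each such $z$ lies below some open $v$ with $v \pprec u$; then the join of these $v$ is at least $u$, and completely regularity follows. Write $z = \bigwedge_n c_n$ with each $c_n \in \C(M)$ and $c_{n+1} \pprec c_n$. Passing to complements, $\neg z = \bigvee_n \neg c_n$ is a cozero element with $\neg c_n \pprec \neg c_{n+1}$. Now $z \leq u$ means $\neg u \leq \neg z = \bigvee_n \neg c_n$.

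This is the main obstacle: a single closed element $\neg u$ covered by the increasing opens $\neg c_n$ need not lie below any one of them without compactness. The plan is therefore not to try to place $\neg u$ below a single $\neg c_n$. Instead we use the frame $\O(M)$ directly. The elements $\neg c_n$ need not be open, but $\square \neg c_n$ is open, and $c_{n+1} \pprec c_n$ gives interpolants. From these we expect to show $c_{n+1} \pprec c_n$ yields $z \leq c_{n+1}$ with $c_{n+1} \leq \square c_n$. We then aim to conclude that $z \pprec u$ holds in the frame sense by using the cozero characterization in the paper: cozero elements are exactly frame cozero elements, and in a frame each cozero element $w$ satisfies $w = \bigvee\{v \mid v \pprec w\}$ \cite{PP12}.

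Concretely, apply (B1) to the closed element $\neg u$. This gives zero-elements $z' \geq \neg u$ with $\bigwedge z' = \neg u$. Then $u = \bigvee \neg z'$ is a join of cozero elements, and each cozero element is a join of opens $v \pprec \neg z' \leq u$. Hence $u = \bigvee\{v \in \O(M) \mid v \pprec u\}$, which is complete regularity. This route uses (B1) rather than (B3), and it avoids the compactness issue entirely, so we adopt it for the converse.
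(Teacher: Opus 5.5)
Your proof is correct and essentially follows the paper's argument. You obtain (B3) via \cref{zero-between}, and for the converse you apply (B1) to $\neg u$, so that $u$ is a join of cozero-elements below it, each of which is a join of opens $v\pprec u$. The only variation is in (B1): you prove it directly, applying \cref{zero-between} to $\neg u\pprec\neg v$, rather than citing that $\Coz(M)$ join-generates a completely regular frame, which makes that step self-contained.

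You should delete the exploratory paragraph in the converse. It is not used, and it contains a false remark: each $c_n$ lies in $\C(M)$, so $\neg c_n$ is open.
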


\begin{proof}
($\Rightarrow$) Suppose $M$ is completely regular. Then $\O(M)$ is completely regular, so $\Coz(M)$ join-generates $\O(M)$ (see, e.g., \cite[p.~286]{PP12}). Thus, every closed element of $M$ is a meet of zero-elements above it. Hence, $\Z(M)$ satisfies \ref{cond-1}. It remains to verify \ref{cond-3}. Let $u\in\O(M)$. By complete regularity, $u=\bigvee\{v\in\O(M)\mid v\pprec u\}$. If $v \pprec u$, then by \cref{zero-between}, there is $z \in \Z(M)$ with $v\leq z\leq u$. Hence, $u$ is a join of zero-elements below it, and \ref{cond-3} holds.

($\Leftarrow$) Conversely, suppose $\Z(M)$ is a Wallman basis. By \ref{cond-1}, every closed element is a meet of zero-elements above it. Taking complements, every open element is a join of cozero-elements. Hence, $\Coz(M)$ join-generates $\O(M)$, so $\O(M)$ is completely regular. Since $M$ is $T_1$, it follows that $M$ is completely regular.
\end{proof}

The next theorem characterizes the Wallman bases that give rise to Hausdorff MT-algebras.

\begin{theorem}\label{lem:Wallman-normality}
    Let $\B$ be a Wallman basis of an MT-algebra $M$. Then $\RO(X_{\B})$ is Hausdorff iff for all disjoint basic closed elements $c,d\in\B$ there exist disjoint basic open elements $u,v$ such that $c\leq u$ and $d\leq v$.
\end{theorem}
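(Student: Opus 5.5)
The plan is to use \cref{compact-eqv-sep}. By \cref{thm:WM-compact}, $N:=\RO(X_{\B})$ is compact and $T_1$, so $N$ is Hausdorff iff $N$ is normal, and that equivalence is choice-free. It therefore suffices to show that $N$ is normal iff the stated separation condition holds for $\B$. Throughout I will use three facts: the closed elements of $N$ are the intersections $\bigcap\sigma[S]$ with $S\subseteq\B$; the open elements are the joins of the sets $\sigma(\neg b)$ with $b\in\B$; and $\sigma$ behaves well on basic elements, by \cref{lem:opens-RO-joins,lem:sigma-neg,lem:closed-RO-meets,lem:injective-basics}.

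($\Leftarrow$) Assume the condition, and let $C=\bigcap\sigma[S]$ and $D=\bigcap\sigma[T]$ be disjoint closed elements of $N$, with $S,T\subseteq\B$.
\begin{enumerate}
\item Since $N$ is compact, \cref{compactness-meets} gives finite $S'\subseteq S$ and $T'\subseteq T$ with $\bigcap\sigma[S']\cap\bigcap\sigma[T']=\varnothing$.
\item Put $c=\bigwedge S'$ and $d=\bigwedge T'$. These lie in $\B$ because $\B$ is a bounded sublattice. By \cref{lem:closed-RO-meets}, $C\subseteq\sigma(c)$, $D\subseteq\sigma(d)$, and $\sigma(c\wedge d)=\sigma(c)\cap\sigma(d)=\varnothing=\sigma(0)$.
\item Since $0\in\B$, \cref{lem:injective-basics} gives $c\wedge d=0$.
\item The hypothesis gives disjoint basic opens $u,v$ with $c\leq u$ and $d\leq v$.
\item Then $\sigma(u)$ and $\sigma(v)$ are open in $N$, and $\sigma(u)\cap\sigma(v)=\sigma(u\wedge v)=\sigma(0)=\varnothing$ by \cref{lem:closed-RO-meets}.
\item Finally, $C\subseteq\sigma(c)\subseteq\sigma(u)$ and $D\subseteq\sigma(v)$ by monotonicity.
\end{enumerate}
Hence $N$ is normal, and therefore Hausdorff.

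($\Rightarrow$) Assume $N$ is Hausdorff, hence normal. Let $c,d\in\B$ be disjoint.
\begin{enumerate}
\item By \cref{lem:closed-RO-meets}, $\sigma(c)$ and $\sigma(d)$ are disjoint closed elements, so normality gives disjoint opens $U\supseteq\sigma(c)$ and $V\supseteq\sigma(d)$.
\item Write $U=\bigvee\{\sigma(\neg b)\mid \sigma(\neg b)\subseteq U\}$. The element $\sigma(c)$ is closed in the compact algebra $N$, so it is compact by \cref{closed-below-compact}. Hence finitely many terms suffice: $\sigma(c)\subseteq\sigma(\neg b_1)\vee\cdots\vee\sigma(\neg b_n)$.
\item Put $u=\neg(b_1\wedge\cdots\wedge b_n)$. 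This is basic open because $b_1\wedge\cdots\wedge b_n\in\B$. By \cref{lem:opens-RO-joins}, $\sigma(u)=\bigvee_i\sigma(\neg b_i)$, so $\sigma(c)\subseteq\sigma(u)\subseteq U$.
\item In the same way, obtain a basic open $v$ with $\sigma(d)\subseteq\sigma(v)\subseteq V$.
\item Then $\sigma(u\wedge v)=\sigma(u)\cap\sigma(v)\subseteq U\cap V=\varnothing$. By \cref{lem:injective-basics}, applied to the basics $u\wedge v$ (basic open) and $0$, we get $u\wedge v=0$.
\item Also by \cref{lem:injective-basics}, $\sigma(c)\subseteq\sigma(u)$ gives $c\leq u$, and likewise $d\leq v$.
\end{enumerate}

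The step I expect to need most care is the reduction to normality. I will check that \cref{compact-eqv-sep} really applies choice-free here: the implication Hausdorff $\Rightarrow$ regular $\Rightarrow$ normal for compact $T_1$-algebras goes through compact regular frames and needs no choice. I will also check that the compactness arguments apply to $\bigcap\sigma[S]$, which is a closed element of $N$ in the declared MT-structure. Everything else is bookkeeping with the basic-element lemmas for $\sigma$.
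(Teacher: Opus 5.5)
Your proof is correct and follows the paper's argument essentially step for step. You reduce Hausdorffness to normality via the compact $T_1$ equivalence, then use compactness of $\RO(X_{\B})$ to pass between arbitrary closed or open elements and single basic ones, with the basic-element properties of $\sigma$ transferring disjointness and inclusions in both directions; you also make explicit details the paper leaves implicit, such as $\sigma(0)=\varnothing$ and merging finitely many basic elements into one because $\B$ is a bounded sublattice.
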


\begin{proof}
    By \cref{compact-eqv-sep}, $\RO(X_{\B})$ is Hausdorff iff $\RO(X_{\B})$ is normal.

    ($\Rightarrow$) Suppose $c, d \in \BC$ and $c \wedge d = 0$. Then $\sigma(c) \cap \sigma(d) = \varnothing$ by \cref{lem:sigma-meets}, so by normality of $\RO(X_{\mathcal B})$, there exists disjoint opens $U,V$ such that $\sigma(c) \subseteq U$ and $\sigma(d) \subseteq V$. By compactness, there exist $u,v \in \BO$ such that $\sigma(c) \subseteq \sigma(u) \subseteq U$ and $\sigma(d) \subseteq \sigma(v) \subseteq V$. By \cref{lem:injective-basics}, $c \leq u$ and $d \leq v$ while $u \wedge v = 0$, as required.

    ($\Leftarrow)$ Suppose $C \cap D = \varnothing$. By compactness and \cref{lem:closed-RO-meets}, there exist disjoint basic closed $c, d \in \mathcal B$ such that $C \subseteq \sigma(c)$ and $D \subseteq \sigma(d)$. Therefore, there are disjoint basic opens $u,v$ such that $c \leq u$ and $d \leq v$. Then $\sigma(u) \cap \sigma(v) = \sigma(u \wedge v) = \varnothing $ by \cref{lem:sigma-meets}, and by \cref{lem:sigma-order-preserving}, $C \subseteq \sigma(c) \subseteq \sigma(u)$ and $D \subseteq \sigma(d) \subseteq \sigma(v)$, so $\RO(X_{\mathcal B})$ is normal.
\end{proof}

\begin{corollary}\label{prop:W-hausdorff}
Let $M$ be a $T_1$-algebra. Then $\w M$ is Hausdorff iff $M$ is normal.
\end{corollary}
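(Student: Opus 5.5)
This follows by specializing \cref{lem:Wallman-normality} to the Wallman basis $\B=\C(M)$. Since $M$ is $T_1$, the closed elements trivially satisfy \ref{cond-1}, because each $c\in\C(M)$ is the meet of the closed elements above it, namely itself. Moreover, $\C(M)$ is a bounded sublattice of itself, which gives \ref{cond-2}. For \ref{cond-3}, take $u\in\O(M)$; since $M$ is $T_1$, the closed elements join-generate $M$, so $u=\bigvee\{c\in\C(M)\mid c\leq u\}$. Thus $\C(M)$ is a Wallman basis, as was already noted in the proof of \cref{prop:subfit-wallmann}. By definition $\w M=\RO(X_{\C(M)})$, so \cref{lem:Wallman-normality} applies directly.

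With $\B=\C(M)$, the basic closed elements are exactly the closed elements of $M$. The basic open elements $\BO=\{\neg c\mid c\in\C(M)\}$ are exactly the open elements $\O(M)$. The criterion of \cref{lem:Wallman-normality} therefore reads as follows: for all disjoint $c,d\in\C(M)$ there exist disjoint $u,v\in\O(M)$ with $c\leq u$ and $d\leq v$. This is verbatim the normality clause in the definition of a normal MT-algebra. The remaining clause of that definition, that $M$ is $T_1$, holds by hypothesis. Hence $\w M$ is Hausdorff if and only if $M$ is normal.

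There is essentially no obstacle. The only point to check is that the $T_1$ hypothesis is what makes $\C(M)$ satisfy \ref{cond-3}, and that it also supplies the $T_1$ part of the definition of normality. No choice is used: \cref{lem:Wallman-normality} relies only on \cref{compact-eqv-sep} and \cref{thm:WM-compact}, and both are choice-free.
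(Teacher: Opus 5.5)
Your proposal is correct and matches the paper's proof. The paper also notes that $\C(M)$ is a Wallman basis when $M$ is $T_1$ and applies \cref{lem:Wallman-normality}; with $\B=\C(M)$ one has $\BO=\O(M)$, so that lemma's criterion is exactly the normality clause, and you simply write out the verification of \ref{cond-1}--\ref{cond-3} that the paper leaves implicit.
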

\begin{proof}
    If $M$ is $T_1$, then $\C(M)$ is a Wallman basis, so \cref{lem:Wallman-normality} applies. 
\end{proof}

For a completely regular MT-algebra $M$, we call $\beta M=\RO(X_{\Z(M)})$ the \emph{Stone--\v{C}ech extension} of $M$. 

\begin{theorem}\label{prop:beta-hausdorff}
    Let $M$ be a completely regular MT-algebra. Then $\beta M$ is compact Hausdorff, and if $M$ is compact, then $M \cong \beta M$.
\end{theorem}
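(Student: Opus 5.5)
The plan has three parts. First, \cref{lem:zero-wallman-basis} gives that $\Z(M)$ is a Wallman basis and that it is a bounded sublattice of $\C(M)$. Second, \cref{thm:WM-compact} gives that $\beta M=\RO(X_{\Z(M)})$ is compact and $T_1$. Third, Hausdorffness reduces via \cref{lem:Wallman-normality} to a separation property of zero-elements: for disjoint $c,d\in\Z(M)$ we need disjoint basic opens $u,v\in\Coz(M)$ with $c\le u$ and $d\le v$. With that in hand, the second claim follows at once from \cref{prop:WM-unique} applied to the Wallman basis $\Z(M)$: if $M$ is compact, then $M\cong\RO(X_{\Z(M)})=\beta M$.

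So the only real work is the separation property, and it must be proved without choice. I would argue in the frame $\O(M)$, where cozero-elements are the usual frame cozero-elements. Given disjoint zero-elements $c,d$, the elements $\neg c$ and $\neg d$ are cozero-elements with $\neg c\vee\neg d=1$.

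The key step is the standard choice-free fact that cozero-elements of a frame are \emph{normally separated}: if $a,b\in\Coz(L)$ and $a\vee b=1$, then there are $a',b'\in\Coz(L)$ with $a'\wedge b'=0$, $a\vee a'=1$ and $b\vee b'=1$. In terms of real-valued functions this is done by subtracting the two functions $f,g$ with $\coz f=a$ and $\coz g=b$. Pointfree, I would build it directly from the sequences $a=\bigvee a_n$ and $b=\bigvee b_n$ with $a_n\pprec a_{n+1}$ and $b_n\pprec b_{n+1}$.

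Step 1 uses compactness. Since $M$ is completely regular it is $T_1$, and $\beta M$ is compact. I would rather use compactness of $\sigma(c)$, or a separation already available in $\beta M$, to pick a finite $n$ with $\neg d_{\text{stage}}$ covering. The cleaner route is as follows. Since $c$ is a zero-element, write $c=\bigwedge c_n$ with $c_{n+1}\pprec c_n$; likewise $d=\bigwedge d_n$.

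Step 2 is a countable interleaving argument, which needs no choice beyond the given sequences. Set
\[
u=\bigvee_n(\neg d_n\wedge \square c_{n}),\qquad v=\bigvee_n(\neg c_{n+1}\wedge \square d_{n+1}),
\]
or a similar alternating expression. One then checks $c\le u$ and $d\le v$, which uses $c\wedge d=0$ and the way the $\pprec$ chains shrink, and $u\wedge v=0$ by comparing indices.

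I expect this interleaving to be the main obstacle. The indices must be tuned so that disjointness holds, and each piece must be shown to be cozero. If the direct construction gets messy, I would fall back to the frame-theoretic literature: the fact that in a completely regular frame disjoint zero-elements are completely separated, so that $\neg d$ contains $c$ well inside in the sense $c\pprec\neg d$. Then \cref{zero-between} and the open version of $\pprec$ produce $c\le u\pprec\ldots$ with $u$ cozero, and $v=\neg\diamond u$ restricted appropriately to a cozero-element containing $d$.

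With the separation established, \cref{lem:Wallman-normality} yields that $\beta M$ is Hausdorff, completing the proof.
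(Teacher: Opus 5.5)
Your outline matches the paper's proof. \cref{lem:zero-wallman-basis} makes $\Z(M)$ a Wallman basis, and \cref{thm:WM-compact} gives compactness and $T_1$. \cref{lem:Wallman-normality} reduces Hausdorffness to separating disjoint zero-elements by disjoint cozero-elements, which are the $\Z(M)$-basic opens. \cref{prop:WM-unique} handles the compact case.

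For the separation step the paper cites \cite[Cor.~5.1.3]{BWW02}: for $u_i=\neg z_i$ with $u_1\vee u_2=1$ there are cozero $v_i\pprec u_i$ with $v_1\vee v_2=1$. \cref{zero-between} then gives zero-elements $y_i$ with $v_i\le y_i\le u_i$, so $\neg y_1,\neg y_2$ are the required disjoint cozero-elements. Your ``normal separation of cozero-elements'' is the same fact in another form. With $a=\neg c$ and $b=\neg d$, the conditions $a\vee a'=1$ and $b\vee b'=1$ give $c\le a'$ and $d\le b'$. So if you simply invoke it, your proof coincides with the paper's.

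Do not rely on the direct interleaving, which you present as your main plan. Your formulas do produce open elements with $c\le u$, $d\le v$ and $u\wedge v=0$:
\begin{itemize}
\item $c\le c_{n+1}\le\square c_n$, so infinite distributivity gives $c\wedge u=c\wedge\neg d=c$;
\item similarly $d\wedge v=d\wedge\neg c=d$;
\item $\square c_n\le c_{m+1}$ when $n\ge m+1$, and $\square d_{m+1}\le d_n$ when $n\le m$, so every cross term vanishes.
\end{itemize}
But \cref{lem:Wallman-normality} needs $u,v\in\Coz(M)$, and nothing makes them cozero. The $c_n,d_n$ are only closed, so $\square c_n$ and $\neg d_n$ are merely open.

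You could try to repair this by inserting zero- or cozero-elements between consecutive $c_n$ (and $d_n$) via \cref{zero-between}, and then arguing that a countable join of cozero-elements is cozero. Both steps require choosing a witness (a scale or a sequence) for each $n$, which is countable choice. That contradicts your claim that the construction ``needs no choice beyond the given sequences.'' Cozero-ness of the separating elements is exactly the content of the cited frame-theoretic result, so in your plan this step has to come from the literature, as in the paper.

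Also drop the suggestion in Step~1 to use ``a separation already available in $\beta M$.'' Hausdorffness of $\beta M$ is what is being proved, and by \cref{compact-eqv-sep} so are its regularity and normality, so that would be circular.
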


\begin{proof}
By \cref{lem:zero-wallman-basis}, $\Z(M)$ is a Wallman basis. Thus, \cref{thm:WM-compact} yields that $\beta M$ is compact $T_1$. Let $z_1,z_2\in\Z(M)$ be disjoint, and put $u_i=\neg z_i$ for $i=1,2$. Then $u_1,u_2\in\Coz(M)$ and $u_1\vee u_2=1$. By \cite[Cor.~5.1.3]{BWW02}, there exist $v_1,v_2\in\Coz(M)$ such that $v_i\pprec u_i$ for $i=1,2$ and $v_1\vee v_2=1$. Then by \cref{zero-between}, there exist $y_i \in \Z(M)$ such that $v_i \leq y_i \leq u_i$ for $i = 1,2$. It follows that $z_i \leq \neg y_i \in \Coz(M)$ for $i=1,2$ and $\neg y_1, \neg y_2$ are disjoint cozero-elements. Since the cozero-elements are precisely the basic open elements associated with $\Z(M)$, \cref{lem:Wallman-normality} implies that $\beta M$ is Hausdorff.

If $M$ is compact, then $M$ is compact $T_1$, so $M \cong \beta M$ by \cref{prop:WM-unique}.
\end{proof}

\begin{corollary}[Choice-free Stone--\v{C}ech compactification]
    Let $M$ be a spatial completely regular MT-algebra. Then
    $(\beta M,\rho_{\beta})$, where
    \[
        \rho_{\beta}(A)
        =
        \bigvee\{a\in\at(M)\mid
        \sigma_{\Z(M)}(a)\subseteq A\},
    \]
    is a Hausdorff compactification of $M$.
\end{corollary}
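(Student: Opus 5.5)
The plan is to obtain the statement as a direct combination of three earlier results: \cref{lem:zero-wallman-basis}, \cref{thm:choice-free-wallman} and \cref{prop:beta-hausdorff}.

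First, since $M$ is completely regular, it is in particular $T_1$ by definition. So \cref{lem:zero-wallman-basis} applies and shows that $\Z(M)$ is a Wallman basis of $M$. Moreover, $\rho_\beta$ is literally the map $\rho_{\Z(M)}$ introduced before \cref{lem:rho-wallman-basis}, with $\B=\Z(M)$. Hence $\beta M=\RO(X_{\Z(M)})$ and $\rho_\beta=\rho_{\Z(M)}$.

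Second, $M$ is spatial and $T_1$, and $\Z(M)$ is a Wallman basis. Therefore \cref{thm:choice-free-wallman} gives that $(\beta M,\rho_\beta)$ is a $T_1$-compactification of $M$. Concretely:
\begin{enumerate}
\item $\rho_\beta$ is a dense MT-embedding;
\item $\beta M$ is compact and $T_1$.
\end{enumerate}

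Third, \cref{prop:beta-hausdorff} shows that $\beta M$ is Hausdorff. Combined with the previous step, $(\beta M,\rho_\beta)$ is a Hausdorff compactification.

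There is no genuine obstacle here; the work is done in the earlier results. The only point to check is that no choice enters. The Hausdorffness argument in \cref{prop:beta-hausdorff} uses \cref{lem:Wallman-normality}, which in turn relies on \cref{compact-eqv-sep} and on the frame-theoretic shrinking result for cozero covers. These are choice-free, apart from countable-choice subtleties already implicit in the notion of cozero-element. No step invokes N\"obeling's theorem or Zorn's lemma, so the corollary holds without the asterisk.
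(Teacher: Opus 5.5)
Your proposal is correct and is exactly the paper's argument: \cref{lem:zero-wallman-basis} makes $\Z(M)$ a Wallman basis, \cref{thm:choice-free-wallman} then shows that $(\beta M,\rho_\beta)$ is a $T_1$-compactification, and \cref{prop:beta-hausdorff} supplies Hausdorffness. One small correction to your remark on choice: the cozero shrinking result is used in the proof of \cref{prop:beta-hausdorff} to verify the separation hypothesis, not inside \cref{lem:Wallman-normality} itself.
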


\begin{proof}
    By \cref{lem:zero-wallman-basis}, $\Z(M)$ is a Wallman basis of
    $M$. Hence, by \cref{thm:choice-free-wallman},
    $(\beta M,\rho_{\beta})$ is a $T_1$-compactification of $M$.
    By \cref{prop:beta-hausdorff}, $\beta M$ is Hausdorff.
    Therefore, $(\beta M,\rho_{\beta})$ is a Hausdorff
    compactification of $M$.
\end{proof}

We now relate the Stone--\v{C}ech extension of an MT-algebra to the analoguous extension for frames. The \emph{Stone--\v{C}ech extension} of a completely regular frame $L$ is obtained by the frame $\beta L$ of \emph{completely regular ideals} of $L$ (\cite{BM80}; see also \cite[p.~132]{PP21}), where we recall that an ideal $I \in \J(L)$ is \emph{completely regular} if $a \in I$ implies there exists $b \in I$ such that $a \pprec b$. Then $(\beta L, \nu)$ is a regular compactification of $L$, where $\nu : \beta L \to L$ is the join map. To show that $\O(\beta M) \cong \beta \O(M)$, we require the following results.

\begin{lemma}\label{lem:zero-cozero-pprec} 
	Let $M$ be completely regular. 
	If $z\in\Z(M)$, $v\in\Coz(M)$, and $z\leq v$, then $z\pprec v$. 
\end{lemma}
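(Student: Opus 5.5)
The plan is to reduce the statement to a compactness-style argument inside the frame $\O(M)$, which is completely regular, and to use the defining sequences of zero- and cozero-elements. Write $z=\bigwedge_n c_n$ with $c_n\in\C(M)$ and $c_{n+1}\pprec c_n$. Write $v=\bigvee_n v_n$ with $v_n\in\O(M)$ and $v_n\pprec v_{n+1}$, as in the remark after the definition of zero-elements. The key observation is that $\pprec$ is a transitive, interpolating relation stable under enlarging on the right and shrinking on the left. So it suffices to find some $n$ with $z\pprec v_n$, or more cheaply some $n$ with $z\leq v_n$: then $z\leq v_n\pprec v_{n+1}\leq v$ gives $z\pprec v$.

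Finding such an $n$ is the main obstacle, because $z$ need not be compact, so $z\leq\bigvee_n v_n$ does not directly yield a finite subcover. My first attempt is to avoid compactness by using the standard frame-theoretic fact (see \cite[Cor.~5.1.3]{BWW02}, already used in the proof of \cref{prop:beta-hausdorff}) that cozero elements in a completely regular frame behave like those of a normal lattice. Concretely, put $u=\neg z\in\Coz(M)$. Then $u\vee v=1$ with $u,v\in\Coz(M)$, so that corollary provides $u',v'\in\Coz(M)$ with $u'\pprec u$, $v'\pprec v$ and $u'\vee v'=1$. From $u'\vee v'=1$ we get $\neg v'\leq u'$. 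From $u'\pprec u$ we get $\neg u\pprec\neg u'$, using that $a\pprec b$ implies $\neg b\pprec\neg a$. Hence $z=\neg u\pprec\neg u'\leq v'$.

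It remains to pass from $z\leq v'$ (and in fact $z\pprec\neg u'\leq v'$) to $z\pprec v$. The interpolating family witnessing $v'\pprec v$ gives $c_0\geq v'$ and $c_1\leq v$ with $c_p\prec c_q$ for $p<q$. Since $z\leq v'\leq c_0$, the same family witnesses $z\pprec v$ directly from the definition of $\pprec$ in $M$. The work is therefore in checking that the MT-algebraic $\pprec$ agrees with the frame relation on open elements, which the paper states, and that it is monotone in both arguments, which is immediate from the definition. I would also double-check that \cite[Cor.~5.1.3]{BWW02} applies to $\O(M)$, which holds since $M$ is completely regular and hence $\O(M)$ is. If that citation is unavailable in the needed form, the fallback is to use the sequences directly. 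By complete regularity of $\O(M)$, $z=\bigwedge_n c_n$ with $c_{n+1}\pprec c_n$. I would interleave the two sequences into a rational-indexed scale that separates $z$ from $\neg v$. This amounts to the usual proof that disjoint zero sets are completely separated, and it uses countable interpolation without choice beyond what the definition of $\pprec$ already supplies.
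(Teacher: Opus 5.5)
Your main argument is correct and is essentially the paper's proof: you apply \cite[Cor.~5.1.3]{BWW02} to the cozero-elements $\neg z$ and $v$ (which satisfy $\neg z\vee v=1$), obtain $u'\pprec\neg z$ and $v'\pprec v$ in $\Coz(M)$ with $u'\vee v'=1$, deduce $z\leq\neg u'\leq v'\pprec v$, and conclude $z\pprec v$. The detour through $\neg u\pprec\neg u'$ is harmless but unnecessary, since $u'\leq\neg z$ already gives $z\leq\neg u'$; neither your opening sequence-based plan nor the fallback is needed.
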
 
\begin{proof} 
	Since $\neg z,v\in\Coz(M)$ and $\neg z\vee v=1$, by \cite[Cor.~5.1.3]{BWW02} there exist $u_1,u_2\in\Coz(M)$ such that 
	$u_1\pprec\neg z$, $u_2\pprec v$, and $u_1\vee u_2=1$. 
    Since $u_1\leq\neg z$, we have $z\leq\neg u_1$. Moreover, $u_1\vee u_2=1$ implies $\neg u_1\leq u_2$. Hence, $z\leq\neg u_1\leq u_2\pprec v$, and therefore $z\pprec v$.
\end{proof} 

\begin{lemma}\label{lem:sigma-pprec} 
	Let $M$ be completely regular, $u\in\O(M)$ and $v\in\Coz(M)$. 
	If $\diamond \sigma_{\Z(M)}(u)\leq\sigma_{\Z(M)}(v)$, then $u\pprec v$. 
\end{lemma}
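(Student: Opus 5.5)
The plan is to use compactness of $\beta M$ to squeeze a single zero-element $z$ between $u$ and $v$, and then finish with \cref{lem:zero-cozero-pprec}. Throughout, write $\sigma=\sigma_{\Z(M)}$. By \cref{lem:zero-wallman-basis}, $\Z(M)$ is a Wallman basis, so all parts of \cref{sigma-properties} are available with $\B=\Z(M)$. Recall also that the basic open elements are exactly the cozero-elements.

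First, I describe $\diamond\sigma(u)$ concretely. The closed elements of $\beta M$ are the meets of elements of $\sigma[\Z(M)]$. Hence
\[
\diamond\sigma(u)=\bigcap\{\sigma(z)\mid z\in\Z(M),\ \sigma(u)\subseteq\sigma(z)\}.
\]
Since $v\in\Coz(M)$, we have $\neg v\in\Z(M)$, and by \cref{lem:sigma-neg}, $\sigma(v)=\sigma(\neg v)^*$. So the hypothesis $\diamond\sigma(u)\leq\sigma(v)$ says exactly that
\[
\sigma(\neg v)\cap\bigcap\{\sigma(z)\mid \sigma(u)\subseteq\sigma(z)\}=\varnothing .
\]

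Next, I apply compactness. The element $\sigma(\neg v)$ is closed in the compact algebra $\beta M$ (\cref{thm:WM-compact}), so it is compact by \cref{closed-below-compact}. By \cref{compactness-meets} there are finitely many $z_1,\dots,z_n\in\Z(M)$ with $\sigma(u)\subseteq\sigma(z_i)$ and $\sigma(\neg v)\cap\sigma(z_1)\cap\dots\cap\sigma(z_n)=\varnothing$. Put $z=z_1\wedge\dots\wedge z_n\in\Z(M)$; this lies in $\Z(M)$ because $\Z(M)$ is a sublattice.

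From $z$ I extract the two required inequalities.
\begin{itemize}
\item By \cref{lem:closed-RO-meets}, $\sigma(z)=\bigcap_i\sigma(z_i)\supseteq\sigma(u)$, and also $\sigma(z\wedge\neg v)=\sigma(z)\cap\sigma(\neg v)=\varnothing=\sigma(0)$. By \cref{lem:injective-basics}, $z\wedge\neg v=0$, that is, $z\leq v$.
\item To get $u\leq z$, note that $u$ itself need not be basic, so I use \ref{cond-3}: $u=\bigvee\{w\in\Z(M)\mid w\leq u\}$. For each such $w$, monotonicity gives $\sigma(w)\subseteq\sigma(u)\subseteq\sigma(z)$, so $w\leq z$ by \cref{lem:injective-basics}. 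Hence $u\leq z$.
\end{itemize}

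Finally, $z\in\Z(M)$, $v\in\Coz(M)$ and $z\leq v$, so \cref{lem:zero-cozero-pprec} gives $z\pprec v$. Together with $u\leq z$ this yields $u\pprec v$.

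The main obstacle is the compactness step, which reduces the arbitrary meet defining $\diamond\sigma(u)$ to one zero-element. It works because $\sigma(\neg v)$ is itself a basic closed element, so \cref{compactness-meets} applies directly. The rest is bookkeeping with \cref{lem:injective-basics}, which is valid only for basic elements; this is why $u$ has to be approximated from below by zero-elements rather than compared directly.
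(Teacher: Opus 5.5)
Your proof is correct and follows essentially the same route as the paper. You describe $\diamond\sigma(u)$ as a meet of basic closed elements, use compactness against $\sigma(\neg v)$ to get a single zero-element $z$ with $\sigma(u)\subseteq\sigma(z)\subseteq\sigma(v)$, deduce $u\leq z\leq v$ via \cref{lem:injective-basics}, and conclude with \cref{lem:zero-cozero-pprec}. The only cosmetic difference is that you obtain $u\leq z$ by writing $u$ as a join of zero-elements using \ref{cond-3}, whereas the paper argues by contradiction using the principal filter $\langle e\rangle$ of a non-zero zero-element $e\leq u\wedge\neg z$, which is the same idea.
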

\begin{proof} 
	Since the elements $\sigma(z)$, for $z\in\Z(M)$, meet-generate $\C(\beta M)$, 
	\[ 
		\diamond\sigma(u) = \bigwedge\{\sigma(z)\mid z\in\Z(M) \text{ and }\sigma(u)\leq\sigma(z)\}.
	\] 
	Since $\neg v\in\Z(M)$ and $\diamond \sigma(u)\wedge\sigma(\neg v)=0$, compactness of $\beta M$ yields $z_1,\ldots,z_n\in\Z(M)$ such that $\sigma(u)\leq\sigma(z_i)$ for each $i$ and $\sigma(z_1)\wedge\cdots\wedge\sigma(z_n) \leq\sigma(v)$.
	Put $z=z_1\wedge\cdots\wedge z_n$. Then $z\in\Z(M)$ and, by \cref{sigma-properties}, 
	$\sigma(u)\leq\sigma(z)\leq\sigma(v)$.
	We claim that $u\leq z$. 
	Otherwise, $u\wedge\neg z\neq0$. 
	Since $\Z(M)$ is a Wallman basis, there is a non-zero $e\in\Z(M)$ such that $e\leq u\wedge\neg z$. 
	Then $\<e\rangle\in\sigma(u)$, whereas $\<e\rangle\notin\sigma(z)$, a contradiction. 
	Thus $u\leq z$. 
	Moreover, $z\leq v$ by \cref{lem:injective-basics}. 
	Hence, $u\leq z\pprec v$ by \cref{lem:zero-cozero-pprec}, and therefore $u\pprec v$. 
\end{proof}

\begin{theorem}
Let $M$ be a completely regular MT-algebra. Then $\O(\beta M)\cong \beta\O(M)$.
\end{theorem}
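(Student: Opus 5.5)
We follow the pattern of \cref{prop:subfit-wallmann}. Write $\sigma=\sigma_{\Z(M)}$ and define
\[
\alpha:\O(\beta M)\to\beta\O(M),\qquad \alpha(U)=\{u\in\O(M)\mid \diamond\sigma(u)\leq U\},
\]
\[
\beta':\beta\O(M)\to\O(\beta M),\qquad \beta'(I)=\bigvee\{\sigma(u)\mid u\in I\}.
\]
The closure $\diamond$ is used in $\alpha$ rather than $\sigma(u)\subseteq U$ in order to force complete regularity of the ideal. Note that $\sigma(u)$ is open in $\beta M$ for every $u\in\O(M)$. Indeed, since $\Z(M)$ is a Wallman basis (\cref{lem:zero-wallman-basis}), $u$ is a join of cozero elements $v_i$, and $\sigma(u)\supseteq\bigvee\sigma(v_i)$ by monotonicity. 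Conversely, the regular-open/filter argument of \cref{lem:opens-RO-joins} should give $\sigma(u)=\bigvee\sigma(v_i)$. Alternatively, one can restrict attention throughout to cozero $u$, since completely regular ideals are generated by their cozero members: if $a\pprec b$, then $a\leq v\pprec b$ for some cozero $v$ by interpolation and \cref{zero-between}.

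\emph{Step 1: $\alpha(U)$ is a completely regular ideal.} It is a downset, and it is closed under finite joins because $\diamond$ preserves finite joins and \cref{lem:opens-RO-joins} applies. For complete regularity, let $u\in\alpha(U)$. The open $U$ is a join of basic opens $\sigma(w)$ with $w\in\Coz(M)$, and $\diamond\sigma(u)$ is a closed element of the compact algebra $\beta M$. Hence $\diamond\sigma(u)\leq\sigma(w)$ for a finite join $w$ of such cozero elements, with $\sigma(w)\leq U$. Since $\beta M$ is compact Hausdorff, it is normal and regular (\cref{compact-eqv-sep}), so we can interpolate: choose an open $V$ with $\diamond\sigma(u)\leq V\leq\diamond V\leq\sigma(w)$, and shrink $V$ to a basic $\sigma(w')$ by compactness again. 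Then \cref{lem:sigma-pprec} gives $u\pprec w'$, and $w'\in\alpha(U)$ because $\diamond\sigma(w')\leq\sigma(w)\leq U$.

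\emph{Step 2: $\alpha$ and $\beta'$ are mutually inverse.} Both maps are monotone. To see $\beta'\alpha(U)=U$, use regularity of $\beta M$: $U$ is the join of opens well inside it, and by compactness each of these lies below some $\sigma(v)$ with $v$ cozero and $\diamond\sigma(v)\leq U$. Next, $I\subseteq\alpha\beta'(I)$: for $u\in I$ pick $u\pprec w\in I$. Then $\neg u\vee w=1$ with $\neg u$ closed, and there is a zero element $z$ with $u\leq z\leq w$ (\cref{zero-between}). Hence $\diamond\sigma(u)\leq\sigma(z)\leq\sigma(w)\leq\beta'(I)$. For the reverse inclusion, take $u\in\alpha\beta'(I)$ and choose $u\pprec u'$ with $u'\in\alpha\beta'(I)$ by Step 1. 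Since $\diamond\sigma(u')$ is closed in a compact algebra and lies below $\beta'(I)=\bigvee_{v\in I}\sigma(v)$, it lies below $\sigma(v_1\vee\dots\vee v_n)$ for some $v_i\in I$. Then \cref{lem:sigma-pprec} gives $u'\pprec v_1\vee\dots\vee v_n\in I$, so $u\in I$. Being mutually inverse order isomorphisms, $\alpha$ and $\beta'$ are frame isomorphisms.

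\emph{Step 3: compatibility.} We check $\nu\circ\alpha=\O(\rho_\beta)$ as in \cref{prop:subfit-wallmann}, or simply note the isomorphism if only $\O(\beta M)\cong\beta\O(M)$ is required.

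The main obstacle is the careful use of \cref{lem:sigma-pprec}, which requires a cozero target, together with the fact that $\sigma(u)$ is open for arbitrary open $u$. I would handle both by working with cozero elements throughout, relying on interpolation to justify that restriction.
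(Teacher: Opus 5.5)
Your outline matches the paper's proof. It uses the same $\alpha(U)=\{u\in\O(M)\mid \diamond\sigma(u)\le U\}$ and an inverse given by joins of $\sigma$'s. Complete regularity of $\alpha(U)$ comes from regularity (or normality) of the compact Hausdorff $\beta M$, compactness and \cref{lem:sigma-pprec}, and $\alpha\beta'(I)=I$ comes from \cref{zero-between} and compactness.

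However, your opening claim is false. You assert that $\sigma(u)$ is open in $\beta M$ for every $u\in\O(M)$, because $\sigma(u)=\bigvee_i\sigma(v_i)$ when $u=\bigvee_i v_i$. The proof of \cref{lem:opens-RO-joins} adjoins two elements $c,d$ to a filter and uses $c\wedge d$; it has no infinitary version, so $\sigma$ preserves only finite joins. A concrete failure:
\begin{itemize}
\item Let $M=\P([0,\omega_1])$ and let $u$ be the set of isolated points.
\item A zero set containing $\omega_1$ contains a tail $[\gamma,\omega_1]$. Hence a zero set disjoint from $u$ misses $\omega_1$, is bounded, and is separated from $\omega_1$ by a clopen tail.
\item By \cref{lem:sigma-def}, the maximal filter $x_{\omega_1}$ of zero sets containing $\omega_1$ therefore lies in $\sigma(u)$.
\item Now let $v$ be cozero with $x_{\omega_1}\in\sigma(v)$. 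Applying \cref{lem:sigma-def} to $\neg v$ gives a zero set $c\ni\omega_1$ with $c\le v$, so $v$ contains a limit ordinal $\lambda<\omega_1$.
\item Since $\{\lambda\}$ is a zero set disjoint from $u$, the filter $x_\lambda$ lies in $\sigma(v)$ by \cref{lem:inside-lemma}, but not in $\sigma(u)$ by \cref{lem:sigma-def}.
\item By maximality of $x_{\omega_1}$, any open $\bigvee_i\sigma(v_i)$ containing it has $x_{\omega_1}\in\sigma(v_i)$ for some $i$. So no open element below $\sigma(u)$ contains $x_{\omega_1}$, and $\sigma(u)$ is not open.
\end{itemize}

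So the restriction to cozero elements, which you offer only as an alternative, is mandatory. It is exactly what the paper builds into its inverse $\gamma(I)=\bigvee\{\sigma(v)\mid v\in I\cap\Coz(M)\}$.

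Your $\beta'(I)$ survives as an element. Every member of a completely regular ideal lies below a cozero member, so by monotonicity $\beta'(I)=\gamma(I)$ is open. But in the inclusion $\alpha\beta'(I)\subseteq I$ you must use this cozero presentation, for two reasons. Compactness of $\diamond\sigma(u')$ only gives finite subcovers of joins of open elements. And \cref{lem:sigma-pprec} needs a cozero target. Once you do this, the detour through $u'$ is unnecessary: apply compactness and \cref{lem:sigma-pprec} to $u$ directly to get $u\le v\in I$.

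One smaller repair is needed in $\beta'\alpha(U)=U$. From $\diamond V\le U$, compactness only yields $\diamond V\le\sigma(v)\le U$, not $\diamond\sigma(v)\le U$. Instead, write $V$ as the join of the basic opens $\sigma(v)\le V$; each satisfies $\diamond\sigma(v)\le\diamond V\le U$, as in the paper.

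With these changes your argument coincides with the paper's.
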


\begin{proof}
    Define
    \[
        \alpha:\O(\beta M)\to\beta \O(M),
        \qquad
        \alpha(U)=\{u\in \O(M)\mid \diamond \sigma(u)\leq U\},
    \]
    and
    \[
        \gamma:\beta \O(M)\to\O(\beta M),
        \qquad
        \gamma(I)=
        \bigvee\{\sigma(v)\mid v\in I\cap\Coz(M)\}.
    \]

    The map $\gamma$ is well defined because $\sigma(v)$ is open in $\beta M$
    for every $v\in\Coz(M)$. We next show that $\alpha$ is well defined. It is immediate that
    $\alpha(U)$ is an ideal. Let $u\in\alpha(U)$. Since $\beta M$ is compact
    Hausdorff, it is regular by \cref{compact-eqv-sep}.
    Thus, there exists $V\in\O(\beta M)$ such that
    $\diamond\sigma(u)\leq V$ and $ \diamond V\leq U$.
    The closed element $\diamond\sigma(u)$ is compact, and the elements
    $\sigma(v)$, with $v\in\Coz(M)$, join-generate $\O(\beta M)$. Hence, there
    exists $v\in\Coz(M)$ such that
    $\diamond\sigma(u)\leq\sigma(v)\leq V$.
    Therefore, $\diamond\sigma(v)\leq U$, so $v\in\alpha(U)$. 
    By \cref{lem:sigma-pprec}, $u\pprec v$. Hence $\alpha(U)$ is a
    completely regular ideal.
    
    We show that $\gamma\alpha(U) = U$. Clearly,
    $\gamma(\alpha(U))\leq U$. Conversely, regularity of $\beta M$ gives
    \[
        U=\bigvee\{V\in\O(\beta M)\mid \diamond V\leq U\}.
    \]
    If $\diamond V\leq U$, then $V$ is a join of basic opens $\sigma(v)\leq V$, where $v\in\Coz(M)$. For each such $v$,
    $\diamond\sigma(v)\leq\diamond V\leq U$,
    so $v\in\alpha(U)$. Hence, $V\leq\gamma(\alpha(U))$, and therefore
    $U\leq\gamma(\alpha(U))$.
    
    We next show that $\alpha\gamma(I) = I$. Let $I\in\beta \O(M)$ and
    $u\in I$. Then there exists $a\in I$ such that $u\pprec a$. Since $\pprec$ interpolates,
    there exists $b$ such that $u\pprec b\pprec a$. Applying
    \cref{zero-between} to $\neg a\pprec\neg b$, there exists $z\in\Z(M)$ such that $\neg a\leq z\leq\neg b$. Then
    $v=\neg z\in\Coz(M)$, $b\leq v\leq a$, and hence
    $u\pprec v\in I$.
    A further application of \cref{zero-between} gives $c\in\Z(M)$ such
    that $u\leq c\leq v$. 
    Therefore
    $\diamond\sigma(u)\leq\sigma(c)\leq\sigma(v)\leq\gamma(I)$,
    so $u\in\alpha(\gamma(I))$. Thus,
    $I\subseteq\alpha(\gamma(I))$.
    Conversely, suppose $u\in\alpha(\gamma(I))$. Since
    $\diamond\sigma(u)$ is compact, there exist
    $v_1,\ldots,v_n\in I\cap\Coz(M)$ such that
    \[
        \diamond\sigma(u)
        \leq
        \sigma(v_1)\vee\cdots\vee\sigma(v_n)
        =
        \sigma(v_1\vee\cdots\vee v_n).
    \]
    Put $v=v_1\vee\cdots\vee v_n$. Then $v\in I\cap\Coz(M)$, and
    \cref{lem:sigma-pprec} gives $u\pprec v$. In particular $u\leq v$,
    so $u\in I$. Hence, $\alpha(\gamma(I))\subseteq I$.
    
    Thus $\alpha$ and $\gamma$ are mutually inverse order isomorphisms, and therefore frame isomorphisms. Consequently, $\O(\beta M)\cong\beta\O(M)$.
\end{proof}

We obtain the following corollary.

\begin{corollary}
    Let $M$ be a completely regular MT-algebra. Then $(\O(\beta M), \rho)$ is the Stone--\v{C}ech compactification of $\O(M)$, where $\rho : \O(\beta M) \to \O(M)$ is the map \[
        \rho(U) = \bigvee\{u\in \O(M) \mid \diamond \sigma(u) \subseteq U\}.
    \]
\end{corollary}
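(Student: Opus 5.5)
The corollary should follow by composing the frame isomorphism $\alpha\colon\O(\beta M)\to\beta\O(M)$ from the preceding theorem with the join map $\nu\colon\beta\O(M)\to\O(M)$. It is known (\cite{BM80}) that $(\beta\O(M),\nu)$ is the Stone--\v{C}ech compactification of the completely regular frame $\O(M)$. Recall that $\O(M)$ is completely regular because $M$ is completely regular and $T_1$. So the plan is to check that $\rho=\nu\circ\alpha$; an isomorphic copy of a compactification over the same codomain is then the same compactification.

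The first step is to compute $\nu\circ\alpha$. By definition, $\alpha(U)=\{u\in\O(M)\mid \diamond\sigma(u)\leq U\}$, and $\nu$ takes joins. Hence
\[
(\nu\circ\alpha)(U)=\bigvee\{u\in\O(M)\mid\diamond\sigma(u)\subseteq U\}=\rho(U).
\]
This is literally the formula in the statement. Here $\sigma=\sigma_{\Z(M)}$, and meets and inclusion in $\RO(X_{\Z(M)})$ agree, so $\leq$ and $\subseteq$ mean the same thing. Thus the triangle $\nu\circ\alpha=\rho$ commutes on the nose, exactly as in \cref{prop:subfit-wallmann}.

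The second step is to transport the compactification properties along $\alpha$. Since $\alpha$ is an isomorphism, $\O(\beta M)$ is compact (as is also immediate from \cref{prop:beta-hausdorff}), and it is regular by \cref{compact-eqv-sep}. The map $\rho=\nu\circ\alpha$ is a frame homomorphism. It is onto because $\nu$ is onto. It is dense because $\rho(U)=0$ gives $\nu(\alpha(U))=0$, hence $\alpha(U)=0$ by density of $\nu$, hence $U=0$. Therefore $(\O(\beta M),\rho)$ is isomorphic, over $\O(M)$, to $(\beta\O(M),\nu)$, which is the Stone--\v{C}ech compactification.

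The only point that needs care is confirming that the isomorphism in the preceding theorem really is the $\alpha$ used here, rather than some abstract isomorphism. In that proof the inverse pair $\alpha,\gamma$ is given explicitly, so no further work is required. Nothing here is a serious obstacle; the corollary is a bookkeeping consequence of the theorem.
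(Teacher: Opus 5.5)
Your proposal is correct and matches the paper's intended argument: the paper states this corollary without a separate proof, as a consequence of the explicit isomorphism $\alpha$ from the preceding theorem. Exactly as in the last paragraph of the proof of \cref{prop:subfit-wallmann}, $\nu\circ\alpha$ is literally the displayed formula for $\rho$, so transporting compactness, surjectivity and density along $\alpha$, as you do, finishes the proof.
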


We now show that under choice the Stone--\v{C}ech extension of a spatial MT-algebra coincides with the classic Stone--{\v C}ech extension of completely regular space. For this we recall (see, e.g., \cite[Ch.~6]{GJ60}) the following convenient description of the latter. Let $X$ be a completely regular space. Then the space $\beta X = \max(\Z(\P(X)))$ is the \emph{Stone--{\v C}ech extension} of $X$ and the pair $(\beta X, e)$, where $e : X \to \beta X$ is defined by $e(x) = \{Z \in \Z(\P(X)) \mid x \in Z\}$ is the \emph{Stone--\v{C}ech compactification} of $X$.
\begin{*theorem}
    Let $X$ be a completely regular space.  Then $\P(\beta X) \cong \beta \P(X)$. Consequently, $\beta X \cong \at(\beta \P(X))$.
\end{*theorem}

\begin{proof}
By definition, $\beta\P(X)=\RO(X_{\Z(\P(X))})$. Hence, by \cref{prop:RO-max},
$\beta\P(X)\cong\P(\max X_{\Z(\P(X))})=\P(\beta X)$ as MT-algebras. Applying $\at$ yields $\at(\beta\P(X))\cong\beta X$.
\end{proof}

We finish the paper with an extension of the celebrated result that the Wallman extension and Stone--\v{C}ech extension in the normal case coincide. We require the following lemmas.

\begin{lemma}\label{wallman-basis-comparison}
Let $\B\subseteq \mathcal D$ be Wallman bases of $M$. Suppose that for all
$d,e\in \mathcal D$ with $d\wedge e=0$, there is $b\in \B$ such that
$d\le b$ and $b\wedge e=0$. Then $x \in \sigma_{\mathcal D}(d)$ iff
$x \cap \B \in \sigma_{\B}(d)$, for all $x \in X_{\mathcal D}$
and $d \in \mathcal D$.
\end{lemma}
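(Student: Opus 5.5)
The plan is to reduce both memberships to the combinatorial description of \cref{lem:sigma-def} and then translate witnesses between $\mathcal D$ and $\B$ using the separation hypothesis. The hypothesis will be applied once in each direction, each time to a different disjoint pair.

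First, I will check that $x\cap\B\in X_{\B}$ for every $x\in X_{\mathcal D}$, so that the statement makes sense. Since $\B$ is a bounded sublattice of $\mathcal D$, we have $1\in x\cap\B$, and $x\cap\B$ is closed under binary meets. It is also upward closed in $\B$: if $b\in x\cap\B$ and $b\le b'\in\B$, then $b'\in\mathcal D$, so $b'\in x$. Finally, $0\notin x\cap\B$ because $x$ is proper. By \cref{lem:sigma-def}, the two conditions to compare are:
\begin{itemize}
\item $x\in\sigma_{\mathcal D}(d)$ iff for every $e\in\mathcal D$ with $d\wedge e=0$ there is $c\in x$ with $c\wedge e=0$;
\item $x\cap\B\in\sigma_{\B}(d)$ iff for every $b\in\B$ with $d\wedge b=0$ there is $c\in x\cap\B$ with $c\wedge b=0$.
\end{itemize}

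($\Rightarrow$) Suppose $x\in\sigma_{\mathcal D}(d)$ and let $b\in\B$ with $d\wedge b=0$. Since $b\in\mathcal D$, there is $c\in x$ with $c\wedge b=0$. However, $c$ need only lie in $\mathcal D$, and replacing it by an element of $\B$ is the one point that needs care. I apply the hypothesis to the disjoint pair $c,b\in\mathcal D$. This gives $b'\in\B$ with $c\le b'$ and $b'\wedge b=0$. Then $b'\in x$ because $x$ is an upset in $\mathcal D$, so $b'\in x\cap\B$ is the required witness.

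($\Leftarrow$) Suppose $x\cap\B\in\sigma_{\B}(d)$ and let $e\in\mathcal D$ with $d\wedge e=0$. I apply the hypothesis to the pair $(e,d)$, in this order, to obtain $b\in\B$ with $e\le b$ and $b\wedge d=0$. By assumption there is $c\in x\cap\B$ with $c\wedge b=0$. Since $e\le b$, it follows that $c\wedge e=0$, and $c\in x$, so $x\in\sigma_{\mathcal D}(d)$.

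I do not expect a genuine obstacle. The only point to watch is choosing the correct disjoint pair to which the separation hypothesis is applied in each direction.
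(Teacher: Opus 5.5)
Your proof is correct and matches the paper's proof essentially step for step. Like the paper, you reduce both memberships to \cref{lem:sigma-def}, and you apply the separation hypothesis to the pair consisting of the $\mathcal D$-witness and $b$ in the forward direction and to the pair $(e,d)$ in the backward direction; your check that $x\cap\B$ is a proper filter of $\B$ just spells out the paper's one-line remark.
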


\begin{proof}
Let $x' = x \cap \B$. Since $\B \subseteq \mathcal D$ is a bounded
sublattice and $x$ is a proper filter of $\mathcal D$, we have $x'\in X_{\B}$.

($\Rightarrow$) Suppose $x\in\sigma_{\mathcal D}(d)$, and let $b\in\B$ with
$b\wedge d=0$. Since $b\in\mathcal D$, \cref{lem:sigma-def} gives
$e\in x$ with $e\wedge b=0$. By the assumption, there is $c\in\B$ such that
$e\le c$ and $c\wedge b=0$. Since $x$ is upward closed, $c\in x$, and hence
$c\in x'$. Thus $x'\in\sigma_{\B}(d)$.

($\Leftarrow$) Suppose $x'\in\sigma_{\B}(d)$, and let $e\in\mathcal D$ with
$e\wedge d=0$. By the assumption, there is $b\in\B$
such that $e\le b$ and $b\wedge d=0$. Since $x'\in\sigma_{\B}(d)$,
\cref{lem:sigma-def} gives $c\in x'$ with $c\wedge b=0$. Then $c\in x$ and
$c\wedge e=0$, since $e\le b$. Therefore $x\in\sigma_{\mathcal D}(d)$.
\end{proof}

\begin{lemma} \label{ZM-and-CM}
Let $M$ be normal and completely regular, $c \in \C(M)$, and
$x \in X_{\C(M)}$. Then $x \in \sigma_{\C(M)}(c)$ iff
$x \cap \Z M \in \sigma_{\Z M}(c)$.
\end{lemma}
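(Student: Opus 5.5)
The plan is to apply \cref{wallman-basis-comparison} with $\B=\Z(M)$ and $\mathcal D=\C(M)$. Both are Wallman bases. $\C(M)$ is a Wallman basis because $M$ is $T_1$: condition \ref{cond-1} is trivial, \ref{cond-2} is clear, and \ref{cond-3} is the $T_1$ condition applied to an open element, using that a closed element below an open element is in $\C(M)$. $\Z(M)$ is a Wallman basis by \cref{lem:zero-wallman-basis}, since $M$ is completely regular. Also $\Z(M)\subseteq\C(M)$. Once the separation hypothesis of \cref{wallman-basis-comparison} is verified, that lemma gives directly $x\in\sigma_{\C(M)}(c)$ iff $x\cap\Z(M)\in\sigma_{\Z(M)}(c)$ for all $x\in X_{\C(M)}$ and $c\in\C(M)$, which is the statement.

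So everything reduces to the following separation property: for disjoint $d,e\in\C(M)$ there is $z\in\Z(M)$ with $d\le z$ and $z\wedge e=0$, that is, $d\le z\le\neg e$. By \cref{zero-between}, it suffices to find $d\pprec\neg e$. I would first use normality of $M$ to separate $d$ and $e$ by disjoint opens $u\ge d$ and $v\ge e$. Then $d\le u\le\neg v$ and $\diamond u\le\neg v\le\neg e$, so $d\prec\neg e$, with the interpolant $u$ satisfying $d\le u\prec\neg e$. Repeating this interpolation (normality applied to $\diamond u$ and $e$, and so on) produces the dyadic rational scale that witnesses $d\pprec\neg e$. This is the usual Urysohn construction in the frame $\O(M)$, carried out on closed and open elements as in the definition of $\pprec$ for MT-algebras.

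The main obstacle is exactly this step. Building the scale by repeated interpolation uses countably many choices of interpolants, and that is where dependent choice enters the MT-algebraic Urysohn lemma (cf. the remark after \cref{compact-eqv-sep}). Since the lemma carries no asterisk, I would first try to avoid building a scale and instead exploit complete regularity, which already supplies scales.

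Concretely: write $\neg e=\bigvee\{w\in\Coz(M)\mid w\le\neg e\}$. By \cref{lem:zero-cozero-pprec}, any zero-element below a cozero-element is $\pprec$ it. The aim is to cover the closed element $\diamond u$, where $d\le u$ and $\diamond u\le\neg e$, by a single cozero-element $w\le\neg e$. Cozero-elements are closed under countable joins, and frame normality gives, for $\neg d\vee\neg e=1$, a binary splitting that can be matched against these cozero covers. If a single cover cannot be obtained this way, the fallback is to accept the interpolation argument and obtain $z$ via \cref{zero-between}. Either way, the remainder of the proof is a direct application of \cref{wallman-basis-comparison}.
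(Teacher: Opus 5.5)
Your proposal follows the paper's proof: it verifies the hypothesis of \cref{wallman-basis-comparison} for $\B=\Z(M)\subseteq\mathcal D=\C(M)$ by producing, for disjoint $d,e\in\C(M)$, some $z\in\Z(M)$ with $d\le z\le\neg e$, and the paper justifies that step only with the phrase ``by normality and complete regularity''. Your Urysohn-type fallback (repeated normality to get $d\pprec\neg e$, then \cref{zero-between}) is a valid way to fill that step, but it uses dependent choice, which the paper's one-line justification does not address; the cozero-cover detour is unnecessary and would not work as sketched, because \cref{lem:zero-cozero-pprec} applies only to zero-elements and $d$ is merely closed.
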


\begin{proof}
Since $M$ is completely regular, $\Z M$ is a Wallman basis. We show that the
hypothesis of \cref{wallman-basis-comparison} is satisfied for
$\B=\Z M$ and $\mathcal D=\C(M)$. Let $d,e\in\C(M)$ with $d\wedge e=0$.
Then $d\leq \neg e$. By normality and complete regularity, there is
$z\in\Z M$ such that $d\leq z\leq \neg e$. Thus $z\wedge e=0$.
The result now follows from \cref{wallman-basis-comparison}.
\end{proof}

\begin{theorem}
    Let $M$ be a completely regular normal MT-algebra. Then $\beta M \cong \w M$. 
\end{theorem}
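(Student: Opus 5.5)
Both $\beta M=\RO(X_{\Z(M)})$ and $\w M=\RO(X_{\C(M)})$ are $T_1$-algebras by \cref{thm:WM-compact}. By \cref{lem:T1-iso} it therefore suffices to produce an order isomorphism $\C(\w M)\cong\C(\beta M)$. Closed elements of each side are arbitrary meets of $\sigma_{\C(M)}(c)$, $c\in\C(M)$, respectively of $\sigma_{\Z M}(z)$, $z\in\Z M$. So the plan is to compare these generators via the restriction map $r\colon X_{\C(M)}\to X_{\Z M}$, $r(x)=x\cap\Z M$. \Cref{ZM-and-CM} gives $\sigma_{\C(M)}(c)=r^{-1}[\sigma_{\Z M}(c)]$ for every $c\in\C(M)$. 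Here $\sigma_{\Z M}(c)$ makes sense because $\sigma$ is defined on all of $M$.

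First I will show that $r$ is monotone and surjective. Given $y\in X_{\Z M}$, the filter of $\C(M)$ generated by $y$ is proper, and its trace on $\Z M$ is $y$, because $\Z M$ is a sublattice. Next I will show that $A\mapsto r^{-1}[A]$ is an order embedding $\RO(X_{\Z M})\to\RO(X_{\C(M)})$ that preserves arbitrary meets (intersections). Meets are clear. For well-definedness and order reflection I will use that $r$ is monotone, surjective, and ``open'' in the sense that for $x\in X_{\C(M)}$ and $y\geq r(x)$ there is $x'\geq x$ with $r(x')\geq y$, namely $x'=\<x\cup y\rangle$. This filter is proper since $x\cup y\subseteq\<y'\rangle$ for any $\C(M)$-filter $y'$ extending both; more directly, properness follows because each $e\in x$ with $e\wedge z=0$ for some $z\in y$ yields, by the separation hypothesis used in \cref{ZM-and-CM}, a $b\in\Z M$ with $e\le b$ and $b\wedge z=0$, and this $b$ lies in $r(x)\subseteq y$, which is a contradiction. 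With this property the regular-open condition $\upset x\subseteq\downset U$ transfers along $r$, so $r^{-1}$ lands in $\RO$, and surjectivity gives injectivity.

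Given these facts, the closed elements of $\w M$ are the intersections of sets $r^{-1}[\sigma_{\Z M}(c)]$. To see that every $\sigma_{\Z M}(c)$ is closed in $\beta M$, I will use (B1) for $\Z M$, which holds since $M$ is completely regular (\cref{lem:zero-wallman-basis}), to write $c=\bigwedge\{z\in\Z M\mid c\le z\}$. I then need $\sigma_{\Z M}(c)=\bigcap\{\sigma_{\Z M}(z)\mid c\le z\}$. The inclusion $\subseteq$ is monotonicity. For $\supseteq$ I will argue via \cref{lem:sigma-def}: if $x$ lies in the intersection and $b\in\Z M$ with $b\wedge c=0$, normality plus \cref{zero-between} (as in \cref{ZM-and-CM}) gives $z\in\Z M$ with $c\le z$ and $z\wedge b=0$. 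Since $x\in\sigma(z)$, some element of $x$ is disjoint from $b$.

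Hence $r^{-1}$ restricts to a meet-preserving order embedding $\C(\beta M)\to\C(\w M)$. It hits every generator $\sigma_{\C(M)}(c)$, and therefore hits every closed element, since meets are preserved. This gives the isomorphism of closed elements, and \cref{lem:T1-iso} concludes. I expect the main obstacle to be the second step: checking that $r^{-1}$ preserves regular openness and reflects order, which rests on properness of $\<x\cup y\rangle$ via the separation property.
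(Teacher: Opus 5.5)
Your proof is correct and is essentially the paper's argument run in the opposite direction. On closed elements, your $r^{-1}\colon\C(\beta M)\to\C(\w M)$ is inverse to the paper's $\varphi(C)=\bigcap\{\sigma_{\Z M}(f)\mid f\in\Z M,\ C\subseteq\sigma_{\C(M)}(f)\}$. Both arguments rest on the same three ingredients: \cref{ZM-and-CM}, the lifting of proper $\Z M$-filters to $\C(M)$-filters (your surjectivity of $r$), and the separation argument giving $\sigma_{\Z M}(c)=\bigcap\{\sigma_{\Z M}(f)\mid f\in\Z M,\ c\le f\}$.

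The step you flag as the main obstacle is in fact unnecessary. For closed $C=\bigcap_i\sigma_{\Z M}(z_i)$, \cref{ZM-and-CM} already gives $r^{-1}[C]=\bigcap_i\sigma_{\C(M)}(z_i)\in\C(\w M)$. Order reflection needs only surjectivity of $r$. So the properness of $\langle x\cup y\rangle$ is never used.
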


\begin{proof}
Since both $\beta M$ and $\w M$ are $T_1$, it suffices to show that $\C(\w M)\cong\C(\beta M)$ by \cref{lem:T1-iso}. Define $\varphi:\C(\w M)\to\C(\beta M)$ by
\[
     \varphi(C)=\bigcap\{\sigma_{\Z M}(f)\mid f\in\Z M\text{ and }C\subseteq\sigma_{\C(M)}(f)\}. 
\]
It is clear that $\varphi$ is order-preserving. To see that $\varphi$ reflects order, suppose $D\nsubseteq C$ for $C,D\in\C(\w M)$. Then there exists $c\in\C(M)$ such that $C\subseteq\sigma_{\C(M)}(c)$ but $D\nsubseteq\sigma_{\C(M)}(c)$. Thus, there exists $x\in D$ such that $x\notin\sigma_{\C(M)}(c)$. By \cref{ZM-and-CM}, $x':=x\cap\Z M\notin\sigma_{\Z M}(c)$. By the proof of \cref{ZM-and-CM}, there exists $f\in\Z M$ such that $c\leq f$ and $x'\notin\sigma_{\Z M}(f)$. Since $C\subseteq\sigma_{\C(M)}(c)\subseteq\sigma_{\C(M)}(f)$, we have $x'\notin\varphi(C)$. On the other hand, if $g\in\Z M$ and $D\subseteq\sigma_{\C(M)}(g)$, then $x\in\sigma_{\C(M)}(g)$, so \cref{ZM-and-CM} gives $x'\in\sigma_{\Z M}(g)$. Hence, $x'\in\varphi(D)$. Therefore, $\varphi(D)\nsubseteq\varphi(C)$, and $\varphi$ reflects order.

To see that $\varphi$ is surjective, let $C\in\C(\beta M)$ and set 
\[
    C'=\bigcap\{\sigma_{\C(M)}(f)\mid f\in\Z M\text{ and }C\subseteq\sigma_{\Z M}(f)\}. 
\]
Then $C'\in\C(\w M)$ because $\Z M\subseteq\C(M)$. We show that $\varphi(C')=C$. If $C\subseteq\sigma_{\Z M}(f)$, then $C'\subseteq\sigma_{\C(M)}(f)$, so $\varphi(C')\subseteq C$, since the closed elements of $\beta M$ are meet-generated by the $\sigma_{\Z M}(f)$. Conversely, let $x'\in C$ and put $x=\{c\in\C(M)\mid z\leq c\text{ for some }z\in x'\}$. Then $x\in X_{\C(M)}$ and $x\cap\Z M=x'$. If $f\in\Z M$ and $C\subseteq\sigma_{\Z M}(f)$, then $x'\in\sigma_{\Z M}(f)$, so \cref{ZM-and-CM} gives $x\in\sigma_{\C(M)}(f)$. Thus, $x\in C'$. If $f\in\Z M$ and $C'\subseteq\sigma_{\C(M)}(f)$, then $x\in\sigma_{\C(M)}(f)$, and another application of \cref{ZM-and-CM} gives $x'\in\sigma_{\Z M}(f)$. Therefore, $x'\in\varphi(C')$, so $C\subseteq\varphi(C')$. Hence, $\varphi(C')=C$. Thus, $\varphi$ is an order isomorphism, and therefore $\w M\cong\beta M$ by \cref{lem:T1-iso}.
\end{proof}

The Wallman basis construction therefore yields choice-free compactifications
for spatial MT-algebras: every spatial $T_1$ MT-algebra admits a Wallman
compactification, and every spatial completely regular MT-algebra admits a Stone--\v{C}ech compactification. In particular, every $T_1$-space
and every completely regular $T_1$-space, represented by its powerset
MT-algebra, admits the corresponding compactification in the category of
MT-algebras without invoking choice. Assuming choice, these compactifying
algebras are spatial and recover the usual compactifying spaces. For
non-spatial $T_1$-algebras, however, the Wallman extension cannot in general
be a compactification in the sense of \cref{def:compactification}. This leaves
open whether there is a natural weaker notion under which the Wallman-type
extensions of arbitrary $T_1$ MT-algebras become compactifications.

\end{document}